\documentclass[12pt]{amsart}

\usepackage{amsmath}
\usepackage{amsfonts}
\usepackage{amssymb}
\usepackage{algorithm,algorithmic}

\usepackage{multirow}

\usepackage{hyperref}
\usepackage{cleveref}

\def\gl{\mathfrak{gl}}
\def\g{\mathfrak{g}}
\def\h{\mathfrak{h}}

\def\s{\mathfrak{sl}}
\def\so{\mathfrak{so}}
\def\sp{\mathfrak{sp}}

\def\C{\mathbb{C}}
\def\F{\mathbb{F}}

\def\N{\mathbb{N}}
\def\Z{\mathbb{Z}}

\def\ad{\operatorname{ad}}

\def\Der{\operatorname{Der}}
\def\Diag{\operatorname{Diag}}
\def\dim{\operatorname{dim}}
\def\End{\operatorname{End}}
\def\Hom{\operatorname{Hom}}
\def\Ker{\operatorname{Ker}}
\def\Mat{\operatorname{Mat}}
\def\Id{\operatorname{Id}}

\def\Rad{\operatorname{Rad}}
\def\Tr{\operatorname{Tr}}

\def\NilRad{\operatorname{NilRad}}

\newtheorem{Theorem}{Theorem}[section]
\newtheorem{Lemma}[Theorem]{Lemma}
\newtheorem{Prop}[Theorem]{Proposition}
\newtheorem{Def}[Theorem]{Definition}
\newtheorem{Cor}[Theorem]{Corollary}

\newtheorem{Remark}[Theorem]{Remark}

\newtheorem{Conv}[Theorem]{Convention}

\parindent=0pt

\title[On the non-existence of sympathetic Lie algebras]
{On the non-existence of sympathetic Lie algebras with dimension less than 25}

\author{A. L. Garc\'ia-Pulido$^\dagger$}
\address{$^\dagger$
Department of Computer Science\\
University of Liverpool.
}
\address{$^\dagger$
Division of Computing Science and Mathematics\\ 
Cottrell Building\\
University of Stirling \\
Stirling, FK9 4LA, United Kingdom.}
\email{analucia.garciapulido@stir.ac.uk, ana@theoremsfrom.coffee}

\author{G. Salgado$^\ddag$}
\address{$^\ddag$
Facultad de Ciencias\\ 
Universidad Aut\'onoma de San Luis Potos\'i\\ 
Av.\ Parque Chapultepec 1570\\ 
Priv.\ del Pedregal, CP 78210\\ 
San Luis Potos\'i, SLP, M\'exico.}
\email{gsalgado@fciencias.uaslp.mx, gil.salgado@gmail.com}

\keywords {Sympathetic Lie algebras; Equivariant maps; Inner derivations}

\subjclass{
Primary:
17B05, 
17B30. 
Secondary:
17B20 
17B60  
}

\begin{document}

\begin{abstract}
In this article we investigate the question of the lowest possible dimension that a sympathetic Lie 
algebra $\g$ can attain, when its Levi subalgebra $\g_L$ is simple.
We establish the structure of the nilradical of a perfect Lie algebra $\g$, as a $\g_L$-module, and 
determine the possible Lie algebra structures that one such $\g$ admits. We prove that, as a 
$\g_L$-module, the nilradical must decompose into at least $4$ simple modules.
We explicitly calculate the semisimple derivations of a perfect Lie algebra $\g$ with Levi 
subalgebra $\g_L = \s_2(\C)$ and give necessary conditions for $\g$ to be a sympathetic 
Lie algebra in terms of these semisimple derivations.
We show that there is no sympathetic Lie algebra of dimension lower than $15$, independently of the nilradical's decomposition. If the nilradical has $4$ simple modules, we show that a sympathetic Lie algebra has dimension greater or equal than $25$.
\end{abstract}

\maketitle

\section{Introduction} 

A Lie algebra $\g$ is {\it sympathetic} if its centre is trivial, if $\g$ is perfect and if $\g$ 
has no outer derivations; that is, 
\begin{enumerate}\label{symp_defn}
  \item\label{trivial_centre} $Z(\g)=0$,
  \item\label{perfect} $\g=[\g,\g]$
  \item\label{no_outer_der} $\Der(\g) = \ad(\g)$.
\end{enumerate}

It is well known that any semisimple Lie algebra satisfies properties (\ref{trivial_centre}), 
(\ref{perfect}) and (\ref{no_outer_der}). Sympathetic Lie algebras were introduced to determine if properties (\ref{trivial_centre}), (\ref{perfect}) and (\ref{no_outer_der}) completely 
characterise semisimple Lie algebras.
In \cite{A} the first non semisimple Lie algebra satisfying the above properties appears; it has dimension 35. This was followed by an example in dimension 48 \cite{B2}, 
where the term {\it sympathetic} was first used. The lowest known dimension of a sympathetic 
Lie algebra is 25 and can be found in \cite{B}. This algebra has much smaller dimension that the previous examples due to the careful construction with Levi subalgebra equal to $\s_2(\C)$, which is the lowest dimensional simple Lie algebra.

\medskip 
In \cite{B} and \cite{B2}, Benayadi develops theory describing structural properties of sympathetic Lie algebras.
However, the existing results do not lead to a classification, or even showing existence, 
of these algebras in low dimensions. Indeed, the non-existence of sympathetic Lie algebras 
of dimensions less than 10 is attributed to J.\ Simon in \cite{B} and it remains an open 
problem to determine if 25 is the smallest dimension possible in a sympathetic Lie algebra.
The lack of progress could be due to two factors. Little was known of the classification of 
Lie algebras at that time and the available computational resources and power were very limited.

\medskip
In this paper we give a solution to the problem of dimension minimality of sympathetic 
Lie algebras 
which are not semisimple. To do this, we restrict our attention to the class of algebras 
whose Levi factor is a simple Lie algebra.

\medskip
By taking full advantage of the representation theory of simple Lie algebras, we find 
very precise necessary conditions that a Lie algebra $\g= \g_L\ltimes \Rad(\g)$ must satisfy 
in order to be sympathetic. 
In \Cref{cor:1_2_repr}, \Cref{thm:3_repr}, we show that if $\g$ 
is perfect and $\Rad(\g)$ decomposes into 1, 2 or 3 simple $\g_L$-modules, then $\g$ is not sympathetic.
Then $\Rad(\g)$ must decompose into at least $4$ simple $\g_L$-modules.

\medskip
This condition completely characterises sympathetic Lie algebras where $\Rad(\g)$ 
decomposes into 4 representations. In \Cref{thm:posibles_mult} we show that, up to isomorphism, there are only $6$ possible algebra structures. We are therefore able 
to determine that the smallest possible dimension such a sympathetic Lie algebra can have 
is 25, see \Cref{thm:main_is_sl2}. Moreover, we show there is only one such algebra in dimension 25 and it is the example given in \cite{B}. The proof of the non-existence in dimensions smaller than 10 
follows easily from our results. These results are independent of the choice of simple Levi subalgebra.

\medskip
We now describe the techniques used to prove our main theorems.
We make systematic use of each of the properties that define a sympathetic Lie algebra 
$\g=\g_L\ltimes \Rad(\g)$ to derive necessary conditions on its algebra structure. 
Recall that $\Rad(\g)$ admits a $\g_L$-module structure induced by the Lie bracket and 
therefore can be decomposed into a direct sum of simple $\g_L$-modules.

\medskip

A crucial point of our argument is to use the fact that the product between any 
two simple $\g_L$-modules $V,W$ is completely determined by bilinear 
$\g_L$-equivariant morphisms $V\otimes W\to U$, where $U$ is one of the simple $\g_L$-modules 
appearing in the decomposition of $\Rad(\g)$.
In other words, the multiplication is completely determined by the spaces 
$\Hom_{\g_L}(V \otimes W, U)$, where $V,W,U$ are simple $\g_L$-modules appearing in the 
decomposition of $\Rad(\g)$. 
In addition, the nilpotency of $\Rad(\g)$ enables us to derive a general form of the 
multiplicative structure of $\Rad(\g)$. This process further highlights how, when working 
under the assumption that $\g$ is sympathetic, one would construct such an algebra.

\medskip
Combining the above multiplicative structure and Clebsh-Gordan-type theorems for simple $\g_L$-modules, we obtain explicit forms of the possible multiplication tables in terms of the number of simple $\g_L$-modules in the decomposition of $\Rad(\g)$.
The final step is to verify the (non) existence of sympathetic Lie algebras with this explicit multiplication tables in the case of at most 4 modules.

\medskip
The outline of the paper is as follows. For  completeness, we begin Section 2 with preliminary notions of classical Lie algebras. In Section 3, we prove our main theorem \Cref{thm:main}:  if $\g$ is a sympathetic Lie algebra such that its Levi subalgebra $\g_L$ is simple and its nilradical decomposes into four simple $\g_L$-modules, then $\dim_{\C} (\g) \geq 25$. From this point, we work with Lie algebras which Levi subalgebra is simple. In \Cref{subsec:rad_of_symp}, we show that the nilradical of any sympathetic Lie algebra decomposes into at least four simple modules and determine the possible multiplicative structures for this case \Cref{thm:posibles_mult}. In \Cref{subsec:symp_alg_not_sl2} and \Cref{subsec:symp_alg_sl2} we investigate whether any of these possible algebras are sympathetic Lie algebras. 
For part of this analysis, we designed algorithms that enable us to do explicit calculations to this end. In Section 4, we include these algorithms and show in a concrete example how to put to practice our theoretical results and use our algorithms. An implementation of our algorithms can be found in \url{https://github.com/winsy17/Sympathetic_Lie_Algebras}.

\medskip

\section{Preliminaries}

Throughout this article we only consider finite dimensional vector spaces over $\C$.

\begin{Def}{\rm
  A {\it Lie algebra} $\g$, is a vector space together with a bilinear map $[,]:\g \times \g \to \g$ satisfying:
  \begin{enumerate}
  \item\label{eqn:skewsym} $[x,y]= - [y,x]$ for every $x,y\in\g$,
  \item\label{eqn:Jacobi_id} $[x,[y,z]] + [y,[z,x]] + [z,[x,y]]=0$ for every $x,y,z\in \g$.
  \end{enumerate}
  }
\end{Def}

Equation (\ref{eqn:Jacobi_id}) is known as the {\it Jacobi identity}. Notice that (\ref{eqn:skewsym})
is equivalent to $[x,x]=0$ for every $x\in\g$.

\medskip
One of the typical examples of a Lie algebra is the following. Consider a vector space $V$ and the 
associative algebra of endomorphims of $V$ denoted by $\End_{\C}(V)$. Define the {\it commutator} on 
$\End_{\C}(V)$,  by
$[T,S]:= T\circ S - S\circ T$
for every $T,S \in \End_{\C}(V)$. The resulting Lie algebra is denoted by $\gl(V)$ and called the  
{\it general linear algebra associated to $V$.}

\medskip
  Let $\g$ and $\h$ be two Lie algebras. A Lie algebra {\it homomorphism}, is a linear transformation $\rho\colon \g \to \h$ such that
$\rho([x,y]_{\g}) = [\rho(x), \rho(y)]_{\h}$.
An {\it isomorphism} of Lie algebras is a bijective homomorphism.
A {\it linear representation} of a Lie algebra $\g$ on a vector space $V$ is a Lie algebra homomorphism
$\rho\colon \g \to \gl (V)$.
  
\medskip
Let $\alpha\colon\g \to \gl(U)$ and $\beta\colon\g \to \gl(W)$ be two linear representations of $\g$ on $U$ and $W$, respectively. The space of {\it equivariant homomorphisms} from $U$ to $W$ is 
\[
\Hom_{\g}(U,W):=\{ T\in \Hom_{\C}(U,W) \ \vert \ T\circ \alpha(x) = \beta(x) \circ T, \ x\in\g\}.
\]
We can induce a new representation $\alpha\otimes\beta$ from
$\g$ on $U\otimes W$ defined by
$
(\alpha\otimes\beta) (x) (u\otimes w) := \alpha(x)(u) \otimes w + u \otimes \beta(x) (w)
$
for every $x\in\g$, $u\in U$ and $w\in W$. This representation is the {\it tensor product} of $\alpha$
and $\beta$.

\medskip
Given any Lie algebra $\g$ the Lie bracket induces a natural linear representation 
$\ad: \g \to \gl(\g)$ defined by  $\ad_x(y):= [x,y]$ for any $x,y\in \g$. We call this representation 
the {\it adjoint representation}. Notice that  
$\Ker(\ad) = Z(\g)=\{ x\in \g \mid [x,y]=0, \forall y \in \g \}$.

\medskip
  A {\it derivation} on $\g$ is a linear transformation $D\colon\g \to \g$ which satisfies the Leibniz rule 
  with respect to the Lie bracket,
$
  D([x,y]) = [Dx,y] + [x,Dy],
$
  for every $x,y\in\g$.

\begin{Lemma}
Let $\g$ be a Lie algebra. The following statements are equivalent.
\begin{enumerate}
\item $D$ is a derivation in $\g$,
\item $D([e_i, e_j]) = [De_i, e_j] + [e_i, De_j]$ for every $e_i, e_j \in \beta$, 
\item\label{item} $[D, \ad(e_i)] - \ad(De_i) = 0$ for every $e_i \in \beta$.
\end{enumerate}
Where $\beta=\{ e_1, \ldots, e_n\}$ is a basis of $\g$.
\end{Lemma}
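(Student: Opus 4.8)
The plan is to prove the equivalence of the three characterizations of a derivation by a cycle of implications, exploiting the bilinearity of the Lie bracket and the linearity of $D$ to reduce the general Leibniz rule to a statement on basis elements, and then rewriting that statement in operator form using the adjoint representation. First I would establish that (1) implies (2): this is immediate, since a derivation satisfies the Leibniz rule for \emph{all} $x,y\in\g$, in particular for the basis vectors $e_i,e_j\in\beta$. The content lies in the converse (2) implies (1), which I would prove by expanding arbitrary elements $x=\sum_i a_i e_i$ and $y=\sum_j b_j e_j$ in the basis $\beta$ and using bilinearity of $[,]$ together with the linearity of $D$. Concretely, I would compute
\begin{equation*}
D([x,y]) = \sum_{i,j} a_i b_j\, D([e_i,e_j]) = \sum_{i,j} a_i b_j\bigl([De_i,e_j]+[e_i,De_j]\bigr),
\end{equation*}
where the middle equality is linearity of $D$ applied to the bilinear expansion of $[x,y]$, and the last equality is the hypothesis (2). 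Reassembling the right-hand side by bilinearity gives exactly $[Dx,y]+[x,Dy]$, establishing (1).

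The remaining equivalence is between (2) and (3), which is purely a matter of rewriting the same identity in terms of the operators $\ad(e_i)\in\gl(\g)$. The key observation is that for a fixed $e_i$, the map $y\mapsto [De_i,y]$ is precisely $\ad(De_i)$, while the commutator of operators $[D,\ad(e_i)] = D\circ\ad(e_i)-\ad(e_i)\circ D$ sends a basis vector $e_j$ to
\begin{equation*}
D([e_i,e_j]) - [e_i,De_j].
\end{equation*}
Thus the condition $\bigl([D,\ad(e_i)]-\ad(De_i)\bigr)(e_j)=0$ for all $e_j\in\beta$ is, after transposing terms, identical to the Leibniz identity $D([e_i,e_j])=[De_i,e_j]+[e_i,De_j]$ of (2). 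Since two linear operators on $\g$ agree if and only if they agree on the basis $\beta$, the vanishing of $[D,\ad(e_i)]-\ad(De_i)$ as an operator is equivalent to its vanishing on each $e_j$, which is equivalent to (2). I would be careful to note that establishing the operator identity (3) for every $e_i\in\beta$ suffices, again because the adjoint map and the bracket are linear in their first argument.

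I do not anticipate a genuine obstacle here, as the statement is foundational; the only point requiring mild care is the bookkeeping of linearity versus bilinearity, namely ensuring that the reduction from arbitrary elements to basis elements is carried out correctly in both arguments of the bracket simultaneously. The cleanest presentation is the cycle $(1)\Rightarrow(2)\Rightarrow(1)$ paired with $(2)\Leftrightarrow(3)$, rather than a single three-way cycle, since the passage between (2) and (3) is a direct translation that reads equally well in either direction.
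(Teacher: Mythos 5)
Your proof is correct and complete. The paper states this lemma without giving a proof, treating it as routine; your argument --- the reduction to basis elements by bilinearity and linearity for $(1)\Leftrightarrow(2)$, and the evaluation of the operator $[D,\ad(e_i)]-\ad(De_i)$ on each basis vector $e_j$ for $(2)\Leftrightarrow(3)$ --- is precisely the standard verification the paper implicitly relies on (and the one its subsequent Remark, on computing $\Der(\g)$ as the solution space of a homogeneous linear system, presupposes).
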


\begin{Remark}
\Cref{item} shows that obtaining a basis for the vector space 
$\Der(\g)=\{ D \in \End(\g) \mid D([x,y])=[Dx,y] + [x, Dy] \}$ is equivalent to finding the solutions of a homogeneous system of linear equations.
\end{Remark}

\medskip

Given a linear representation of $\g$ on $V$ $\rho:\g \to \gl(V)$, we say that a subspace 
$W\subset V$ is {\it invariant} if $\rho(x)W \subset W$ for any $x\in\g$.  The trivial subspaces $\{0\}$
and $V$ are invariant subspaces.
We call a representation $\rho\colon\g \to \gl(V)$  {\it irreducible} if the only invariant 
subspaces of $V$ are the trivial ones. 
We say that the representation is {\it completely  reducible} if every invariant subspace 
$W$ admits an invariant complement $U$ such that
$V=W\oplus U$.
  
\medskip
From now on, we say that $V$ is a $\g$-{\it module} if there exists a linear representation $\rho:\g \to \gl(V)$. 
We will say that the $\g$-module $V$ is {\it simple} if $\rho$ is an irreducible representation and, 
{\it semisimple}, if $\rho$ is completely reducible.

\medskip
The very well known classification of simple Lie algebras over $\C$ is given as follows.

\begin{Theorem}[See \cite{H}, \cite{J}, \cite{O-V}]
If $\g$ is a finite dimensional simple Lie algebra  over $\C$, then $\g$ is isomorphic to one of the following Lie algebras.
\begin{enumerate}
\item $A_n := \s(n+1)$, $n\geq 1$ ,
\item $B_n:=  \so(2n+1)$, $n\geq 2$ ,
\item $C_n:= \sp(2n)$, $n \geq 3$, 
\item $D_n:= \so(2n)$, $n\geq 4$, 
\item $E_6$, $E_7$, $E_8$,
\item $F_4$,  
\item $G_2$.
\end{enumerate}
The corresponding dimensions in each case are given by 
\begin{enumerate}
  \item $\dim(\s(n+1)) = n^2+2n$,
  \item $\dim(\so(2n+1)) = 2n^2+n$,
  \item $\dim(\sp(2n))=2n^2+n$,
  \item $\dim(\so(2n)) = 2n^2-n$,
  \item $\dim(E_6) = 78$,
  \item $\dim(E_7) =133$,
  \item $\dim(E_8) = 248$,
  \item $\dim(F_4) = 52$, 
  \item $\dim(G_2) = 14$.
\end{enumerate} 
\end{Theorem}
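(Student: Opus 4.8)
The plan is to establish this via the Cartan--Killing classification through root systems. First I would recall that since $\g$ is simple it is in particular semisimple, so by Cartan's criterion its Killing form $\kappa(x,y) = \Tr(\ad_x \ad_y)$ is non-degenerate. I would then choose a Cartan subalgebra $\h \subset \g$, i.e.\ a maximal toral subalgebra, and show that $\ad(\h)$ acts by commuting semisimple (simultaneously diagonalizable) operators. This yields the root space decomposition
\[
\g = \h \oplus \bigoplus_{\alpha \in \Phi} \g_\alpha,
\]
where $\Phi \subset \h^*$ is the finite set of nonzero roots and each root space $\g_\alpha = \{ x \in \g \mid [h,x] = \alpha(h)\,x \text{ for all } h \in \h\}$ is one-dimensional.

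Second, I would establish the combinatorial rigidity of $\Phi$. Using the Killing form to put a real Euclidean structure on the real span of $\Phi$, I would verify the abstract root system axioms: $\Phi$ spans the space, $\Phi \cap \R\alpha = \{\pm \alpha\}$, the reflections $s_\alpha$ permute $\Phi$, and the Cartan integers $\langle \beta, \alpha^\vee \rangle \in \Z$. The simplicity of $\g$ translates precisely into irreducibility of the root system $\Phi$. After choosing a base of simple roots I would encode $\Phi$ in its Cartan matrix, equivalently its Dynkin diagram.

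Third comes the heart of the argument: the purely combinatorial classification of connected Dynkin diagrams. Here one uses that simple roots have pairwise angles in $\{90^\circ, 120^\circ, 135^\circ, 150^\circ\}$ and that every subdiagram is itself admissible; these constraints force the diagram to be one of $A_n$, $B_n$, $C_n$, $D_n$, $E_6$, $E_7$, $E_8$, $F_4$, $G_2$. This is the step I expect to be the main obstacle, being an intricate case analysis that rules out forbidden configurations (chains with several branch nodes, multiple multiple-bonds, oversized exceptional candidates, and so on).

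Finally I would close the loop with existence and uniqueness. For uniqueness (the isomorphism theorem) I would show that the root system together with a choice of Chevalley generators determines $\g$ up to isomorphism via Serre's relations. For existence I would exhibit a simple Lie algebra realizing each diagram: the classical matrix algebras $\s(n+1)$, $\so(2n+1)$, $\sp(2n)$, $\so(2n)$ for types $A$--$D$, and the exceptional algebras via Serre's presentation for $E$, $F$, $G$. The dimension formulae then follow from $\dim(\g) = |\Phi| + \dim(\h)$ by counting roots in each type.
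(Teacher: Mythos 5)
Your proposal is the standard Cartan--Killing classification argument (root space decomposition, abstract root systems, Dynkin diagram combinatorics, existence and uniqueness via Serre's presentation), which is precisely the proof given in the references \cite{H}, \cite{J}, \cite{O-V} that the paper cites for this theorem without reproducing a proof. The outline is correct, including the dimension count $\dim(\g) = |\Phi| + \dim(\h)$, so there is nothing to add.
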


The only existing isomorphisms between semisimple Lie algebras are the following.
\[
A_1 \simeq B_1 \simeq C_1 \simeq D_1,
\]
\[
B_2 \simeq C_2, \qquad D_2 \simeq A_1 \oplus A_2, \qquad A_3 \simeq D_3
\]

\begin{Theorem}[Weyl]\label{thm:Weyl}
Let $\g$ be a semisimple Lie algebra. Then,
 any finitely dimensional linear representation of $\g$ is completely reducible.
 \end{Theorem}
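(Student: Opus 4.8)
The plan is to give the classical algebraic proof via the Casimir operator, reducing complete reducibility to the splitting of one carefully chosen short exact sequence. First I would reduce to the case of a faithful representation $\rho\colon\g\to\gl(V)$. If $\mathfrak{k}=\Ker(\rho)$, then $\mathfrak{k}$ is an ideal of $\g$; since $\g$ is semisimple it splits as $\g=\mathfrak{k}\oplus\g'$ with $\g'$ a semisimple ideal, and because $\mathfrak{k}$ acts as zero the $\g$-invariant and $\g'$-invariant subspaces of $V$ coincide, so it suffices to treat the faithful representation of $\g'$. Assuming $\rho$ faithful, the trace form $\beta(x,y)=\Tr(\rho(x)\rho(y))$ is a symmetric invariant bilinear form whose radical is a solvable ideal, hence zero by Cartan's criterion; thus $\beta$ is non-degenerate. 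Choosing dual bases $\{x_i\}$, $\{y_i\}$ of $\g$ with respect to $\beta$, I define the Casimir operator $c=\sum_i\rho(x_i)\rho(y_i)\in\End(V)$. A direct computation shows that $c$ commutes with every $\rho(x)$, and duality gives $\Tr(c)=\sum_i\Tr(\rho(x_i)\rho(y_i))=\dim\g$.

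Second, I would establish the codimension-one case: if $W\subset V$ is an invariant subspace with $\dim V/W=1$, then $W$ has an invariant complement. By an induction on $\dim W$ one reduces to the situation where $W$ is irreducible. Since a semisimple Lie algebra is perfect, the action on the one-dimensional quotient $V/W$ factors through $\g/[\g,\g]=0$ and is therefore trivial, so $\rho(\g)V\subset W$ and in particular $c$ annihilates $V/W$. If $W$ is the trivial module the claim is immediate; otherwise $W$ is irreducible and nontrivial, so by Schur's lemma $c$ acts on $W$ as a scalar $\lambda$. Because $c$ vanishes on $V/W$ we obtain $\Tr(c)=\lambda\dim W=\dim\g\neq0$, forcing $\lambda\neq0$. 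Hence $c$ is invertible on $W$ and zero on $V/W$, so $\Ker(c)$ is a one-dimensional $\g$-invariant subspace meeting $W$ trivially, i.e.\ the desired complement.

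Third, for an arbitrary invariant $W\subset V$ I would manufacture a codimension-one situation inside $\Hom_{\C}(V,W)$, equipped with the action $(x\cdot f)=\rho(x)\circ f-f\circ\rho(x)$. Let $\mathcal{V}$ be the subspace of maps $f$ whose restriction $f|_W$ is a scalar multiple of the identity, and let $\mathcal{W}\subset\mathcal{V}$ consist of those with $f|_W=0$; both are $\g$-submodules, and $\dim\mathcal{V}/\mathcal{W}=1$ with trivial quotient action. Applying the codimension-one case to $\mathcal{W}\subset\mathcal{V}$ produces a $\g$-invariant $f\in\mathcal{V}$ with $f|_W=\Id_W$. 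Being $\g$-invariant means $f$ is an equivariant projection $V\to W$, so $\Ker(f)$ is a $\g$-invariant complement of $W$. Iterating the existence of complements then yields full complete reducibility.

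The main obstacle is the codimension-one case, and within it the non-vanishing of the Casimir scalar $\lambda$: the entire argument hinges on the identity $\Tr(c)=\dim\g\neq0$ combined with Schur's lemma, which is precisely where semisimplicity enters, through the non-degeneracy of $\beta$ guaranteed by Cartan's criterion. By contrast, the two bookkeeping reductions—to faithful representations and to irreducible $W$—are routine once this core case is in hand.
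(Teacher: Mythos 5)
Your proof is correct: it is the classical Casimir-operator argument (nondegeneracy of the trace form via Cartan's criterion, the codimension-one splitting via the Casimir element and Schur's lemma, then the $\Hom_{\C}(V,W)$ trick to reduce the general case to codimension one). The paper does not prove \Cref{thm:Weyl} at all but quotes it as a classical result from its standard references, and your argument is essentially the proof given in those references (e.g.\ Humphreys), so there is nothing to reconcile.
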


 \begin{Theorem}[Schur's Lemma]\label{thm:Schur}
 Let $\g$ be a Lie algebra and let
 \[\rho_V\colon \g \to \gl(V) \qquad\text{and}\qquad \rho_W\colon \g \to \gl(W)
 \] 
 be two irreducible representations. Let $T \in \Hom_{\C}(V,W)$ be such that $T\circ \rho_V = \rho_W \circ T$.
\begin{enumerate}
\item If $\rho_V \not\sim \rho_W$ then $T=0$,
\item If $\rho_V = \rho_W$ then $T= \lambda \Id_{V}$.
\end{enumerate}
\end{Theorem}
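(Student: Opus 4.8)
The plan is to extract everything from the single hypothesis that $T$ is equivariant, namely $T \circ \rho_V(x) = \rho_W(x) \circ T$ for every $x \in \g$. First I would observe that this identity forces both $\Ker(T) \subseteq V$ and $\operatorname{Im}(T) \subseteq W$ to be invariant subspaces. For the kernel: if $v \in \Ker(T)$ then $T(\rho_V(x)v) = \rho_W(x)(Tv) = 0$, so $\rho_V(x)v \in \Ker(T)$. For the image: every element has the form $Tv$, and $\rho_W(x)(Tv) = T(\rho_V(x)v) \in \operatorname{Im}(T)$. Since $\rho_V$ and $\rho_W$ are irreducible, each of these subspaces must be trivial, so $\Ker(T) \in \{0, V\}$ and $\operatorname{Im}(T) \in \{0, W\}$.

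For part (1) I would argue by contradiction. Suppose $T \neq 0$. Then $\Ker(T) \neq V$, so $\Ker(T) = 0$ and $T$ is injective; likewise $\operatorname{Im}(T) \neq 0$, so $\operatorname{Im}(T) = W$ and $T$ is surjective. Hence $T$ is a linear isomorphism intertwining $\rho_V$ and $\rho_W$, which is exactly the assertion $\rho_V \sim \rho_W$, contradicting the hypothesis. Therefore $T = 0$.

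For part (2) we have $V = W$ and $\rho_V = \rho_W$, so $T$ is an equivariant endomorphism of $V$. The key additional ingredient is that the ground field is $\C$ and $V$ is finite-dimensional, so the characteristic polynomial of $T$ has a root $\lambda \in \C$ and $T$ admits an eigenvalue. I would then consider the map $T - \lambda\,\Id_V$, which is again equivariant because $\Id_V$ commutes with every $\rho_V(x)$. Applying the kernel argument above, $\Ker(T - \lambda\,\Id_V)$ is invariant; it is nonzero since it contains an eigenvector, so by irreducibility it equals $V$. Hence $T - \lambda\,\Id_V = 0$, i.e.\ $T = \lambda\,\Id_V$.

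The only genuinely non-formal step is the existence of an eigenvalue in part (2), which is precisely where algebraic closedness of $\C$ (together with finite-dimensionality) is used; everything else follows directly from the invariance of kernel and image and the definition of irreducibility. I do not anticipate a real obstacle, as the result is classical, but I would take care to state explicitly that working over $\C$ in finite dimensions is what makes the eigenvalue argument valid.
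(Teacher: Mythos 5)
Your proof is correct and is the canonical argument: the paper states Schur's Lemma as a classical preliminary without proof (deferring to standard references such as \cite{H}, \cite{J}), and the proof it implicitly relies on is exactly yours --- invariance of $\Ker(T)$ and $\operatorname{Im}(T)$, irreducibility forcing them trivial, and the eigenvalue argument in part (2), which, as you correctly emphasise, is where algebraic closedness of $\C$ and finite-dimensionality enter. No gaps.
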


\section{Main Theorem}

We divide this section in two parts. In the first part we assume that 
$\g=\g_L\ltimes \Rad(\g)$  is a Lie algebra such that its Levi subalgebra $\g_L$ is simple and its radical $\Rad(\g)$ is nilpotent. We show that, if $\g$ is sympathetic, the decomposition of $\Rad(\g)$ into simple $\g_L$-modules must contain at least four non-trivial modules.

\medskip
In the second part, we fix a simple Lie algebra $\g_L$ and determine all the possible 
multiplicative structures that a sympathetic Lie algebra $\g$ can have assuming that $\Rad(\g)$ 
decomposes into four simple $\g_L$-modules. These possible Lie brackets significantly reduce 
the candidate sympathetic Lie algebras. We verify if any of the remaining candidates is a sympathetic Lie algebra. 

\medskip
The main result of this work is the following.
\begin{Theorem}[Main Theorem]\label{thm:main}
  Let $\g$ be a sympathetic Lie algebra with simple Levi subalgebra $\g_L$, then $\dim_{\C} \g \geq 25$.
  \end{Theorem}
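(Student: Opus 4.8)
The plan is to use the Levi decomposition $\g = \g_L \ltimes \Rad(\g)$ and to extract, from each of the three defining properties of a sympathetic algebra, enough constraints to reduce the statement to a finite representation-theoretic case analysis. First I would observe that perfectness pins down the radical: since $[\g, \Rad(\g)] \subseteq \NilRad(\g)$ and $[\g_L, \g_L] = \g_L$, one gets $[\g, \g] \subseteq \g_L + \NilRad(\g)$, so $\g = [\g,\g]$ forces $\Rad(\g) = \NilRad(\g)$ to be nilpotent. Writing $\mathfrak{n} := \Rad(\g)$, the adjoint action makes $\mathfrak{n}$ a $\g_L$-module, which is completely reducible by \Cref{thm:Weyl}; hence $\mathfrak{n} = \bigoplus_{i=1}^N V_i$ with each $V_i$ a simple $\g_L$-module. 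Because $\g_L$ surjects onto every nontrivial simple summand under the bracket, perfectness amounts to saying that the trivial summands, if any, lie inside $[\mathfrak{n},\mathfrak{n}]$, while $Z(\g)=0$ prevents them from being central; the quantity that will matter is the number of \emph{nontrivial} modules.

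The crux is the condition $\Der(\g) = \ad(\g)$. Since $Z(\g) = 0$ and $\g$ is perfect, $\ad$ is injective, so this condition is equivalent to the numerical identity $\dim \Der(\g) = \dim \g$, and my strategy is to exhibit derivations that are necessarily outer whenever $\g$ is too small. The basic supply is representation-theoretic: by Schur's Lemma (\Cref{thm:Schur}) and Clebsch--Gordan, the bracket on $\mathfrak{n}$ is entirely encoded by the equivariant spaces $\Hom_{\g_L}(V_i \otimes V_j, V_k)$, and the diagonal maps $D_\lambda$ acting as $\lambda_i \Id$ on $V_i$ and by $0$ on $\g_L$ are exactly the derivations satisfying $\lambda_i + \lambda_j = \lambda_k$ for every nonzero bracket component $[V_i, V_j] \to V_k$. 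A direct computation shows that no nonzero \emph{semisimple} $D_\lambda$ can be inner: an inner derivation vanishing on $\g_L$ must be $\ad(x)$ with $x$ in the trivial part of $\mathfrak{n}$, and such an $\ad(x)$ is nilpotent, hence cannot equal a nonzero diagonalisable $D_\lambda$. Therefore sympatheticity forces the homogeneous system $\{\lambda_i + \lambda_j = \lambda_k\}$ to admit only the zero solution, which in turn demands a non-abelian nilpotent bracket whose lower-central layers are genuinely mixed by the module decomposition, and, through the Clebsch--Gordan restrictions on which $V_k$ may occur in $V_i \otimes V_j$, a lower bound on the number of modules.

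Feeding this into the module count, \Cref{cor:1_2_repr} and \Cref{thm:3_repr} rule out $N \le 3$, so $\mathfrak{n}$ has at least four nontrivial simple summands. For $N = 4$ the equivariant bookkeeping leaves only finitely many admissible bracket tables (\Cref{thm:posibles_mult}); for each candidate I would compute the full derivation algebra --- not only the diagonal $D_\lambda$, but also the $\g_L$-equivariant maps between isomorphic summands and the remaining, possibly nilpotent, derivations compatible with the bracket --- and test whether any of them fails to be inner. Minimising $\dim \g_L + \sum_i \dim V_i$ over the structures that survive this test should yield $25$, attained only by the algebra of \cite{B}; I would then verify that neither a larger simple Levi nor a decomposition with $N \ge 5$ summands can produce a smaller total, so that $\dim_{\C}\g \ge 25$ holds in every case.

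The main obstacle is precisely this explicit determination of $\Der(\g)$ for the $N=4$ candidates. Translating ``$D$ is a derivation'' into the homogeneous linear system of \Cref{item} is routine, but certifying, for every admissible bracket table, that its solution space has dimension exactly $\dim \g$ --- equivalently, that the only derivations are inner --- is delicate, since it requires controlling the diagonal, the isotypic, and the nilpotent derivations simultaneously. This is the step for which the computer-assisted calculations of Section~4 are designed, and it is where the careful choice of the four modules (chosen to annihilate every would-be outer derivation while keeping the total dimension as low as $25$) becomes the decisive point.
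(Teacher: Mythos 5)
Your outline follows the paper's own route: perfectness makes the radical nilpotent, Weyl's theorem decomposes it into simple $\g_L$-modules, the diagonal semisimple maps $D_\lambda$ subject to $\lambda_i+\lambda_j=\lambda_k$ can be inner only if they vanish (this is exactly \Cref{prop:der_semisimples}, including your nilpotency argument for $\ad(x)$ with $x$ in the trivial part of the radical), the cases $s(\g)\le 3$ are excluded, and the $s(\g)=4$ tables of \Cref{thm:posibles_mult} are finished off case by case, with machine assistance for $\Der(\g)$. However, there is a concrete gap in your endgame. The step ``for each candidate I would compute the full derivation algebra'' is not well posed as written: a multiplication table does not determine an algebra. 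By Schur's Lemma each component $\Gamma_{i,j}^p$ is unique only up to a scalar, so every table is a \emph{family} of algebras parametrized by nonzero rescalings, and most candidates fail the Jacobi identity for \emph{every} choice of scalars. Ruling these out requires the paper's observation (\Cref{Lemma:family_no_Lie}) that the relevant Jacobi obstructions are homogeneous in the $\Gamma$'s, hence invariant under rescaling; likewise the semisimple-derivation constraints of \Cref{prop:der_semisimples_sl2} depend only on which brackets are nonzero. Checking one representative per table, as your plan implicitly does, proves nothing about the family. You also omit the reductions that make the $\s_2(\C)$ search finite in the first place: the $\Z_2$-grading of \Cref{lem:Z_2_graded} and the mod-4 parity constraints of \Cref{thm:even_irreps}, which are what cut the candidates down to the short list of \Cref{thm:possible_cases_sl2}.

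Second, your closing promise to verify that a decomposition with five or more summands cannot produce a smaller total is not a routine check, and it is not something the paper accomplishes either. Outside $\g_L=\s_2(\C)$, the dimension counts of \Cref{thm:smallest_symp_sl_2} do give more than $25$ (and note that result carries a no-trivial-modules hypothesis), but for $\g_L=\s_2(\C)$ with $s(\g)\ge 5$ the paper proves only $\dim(\g)\ge 15$; consistent with this, the introduction states the Main Theorem with the hypothesis that the nilradical decomposes into exactly four simple $\g_L$-modules. So this step of your plan is genuinely open: you should either restrict to the four-module hypothesis, as the paper's proof effectively does, or supply a new argument for $s(\g)\ge 5$ that neither you nor the paper currently has.
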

  
\medskip
From now on, we will only consider Lie algebras $\g$ with a simple Levi subalgebra $\g_L$ and we will denote the radical $\Rad(\g)$ by $\h$. 
By \Cref{thm:Weyl}, $\h$ can be decomposed, with respect to the $\g_L$-action on $\h$ induced by the Lie bracket, into a sum of simple $\g_L$-modules: 
\[
  \h = V_{n_1} \oplus \cdots \oplus V_{n_k}.
\]
Denote by $s(\g) := k$ to the number of simple $\g_L$-modules in the decomposition of $\h$.
Throughout this work we denote by $V_{n_i}$ to the irreducible representation of $\g_L$ 
with highest weight $n_i$. 
\medskip

Now assume that $\g$ is perfect. It is well known that if $\g$ is perfect then $\Rad(\g)$ is nilpotent (see \cite{J} (Chapter 3, Section 9 Corollary 2)) and therefore $\Rad(\g) = \NilRad(\g)$. Since $\h$ is a nilpotent subalgebra, $0 \neq Z(\h)\subset \h$ and $Z(\h)$ inherits a decomposition as a $\g_L$-submodule of $\h$:
\[ 
Z(\h)= V_{z_1} \oplus \cdots \oplus V_{z_r},
\]
where $\{z_1, \ldots, z_r\} \subseteq \{ n_1, \ldots, n_k\}$.

\begin{Prop}\label{prop:Levi_bracket_symp}
  Let $\g = \g_L \ltimes \h$ be a Lie algebra with simple Levi subalgebra $\g_L$. Given the decomposition $\h = V_{n_1} \oplus \cdots \oplus V_{n_k}$ into simple $\g_L$-modules, then
  \begin{enumerate}
\item\label{item:bracket_simple_irrep_0} $\dim(V_{n_i}) = 1$ if, and only if, $[\g_L,V_{n_i}] = 0$,
\item\label{item:bracket_simple_irrep} if $\dim(V_{n_i}) > 1$, then $[\g_L,V_{n_i}] = V_{n_i}$,
\item\label{item:bracket_two_irreps} if $[V_{n_i},V_{n_j}] \neq 0$, then there exists a $\g_L$-module 
$V_{n_p}$, $1\leq p \leq k$, such that 
$V_{n_p} \subset  V_{n_i} \otimes V_{n_j} $; and a unique $\g_L$-equivariant morphism 
$\Gamma_{i,j}^p\colon V_{n_i} \otimes V_{n_j} \to V_{n_p}$ such that 
$\pi_p ([V_{n_i}, V_{n_j}]) = \Gamma_{i,j}^p(V_{n_i},V_{n_j})$, where $\pi_p \colon V_{n_i} \otimes V_{n_j} \to V_{n_p}$ 
is the natural projection to $V_{n_p}$. If $i=j$, then $V_{n_p} \subset  \Lambda^2(V_{n_i}) $ and 
$\Gamma_{i,i}^p$ is skew-symmetric.

\end{enumerate} 
\end{Prop}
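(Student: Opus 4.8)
The plan is to treat the three parts in turn, relying on Schur's Lemma (\Cref{thm:Schur}) and Weyl's complete reducibility theorem (\Cref{thm:Weyl}), together with the fact that a simple Lie algebra is perfect.

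For parts (\ref{item:bracket_simple_irrep_0}) and (\ref{item:bracket_simple_irrep}) the key observation is that $[\g_L, V_{n_i}]$ is itself a $\g_L$-submodule of $V_{n_i}$: if $w = [x, v]$ with $x \in \g_L$ and $v \in V_{n_i}$, then for any $y \in \g_L$ the Jacobi identity gives $[y, w] = [[y,x], v] + [x, [y, v]] \in [\g_L, V_{n_i}]$. Since $V_{n_i}$ is simple, this submodule is either $0$ or all of $V_{n_i}$. Now $\g_L$ is simple, hence perfect, so any one-dimensional representation is trivial, as it must vanish on $[\g_L, \g_L] = \g_L$; this yields the implication $\dim V_{n_i} = 1 \Rightarrow [\g_L, V_{n_i}] = 0$. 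Conversely, if $[\g_L, V_{n_i}] = 0$ the action is trivial and simplicity forces $\dim V_{n_i} = 1$, which proves (\ref{item:bracket_simple_irrep_0}). Part (\ref{item:bracket_simple_irrep}) is then immediate: if $\dim V_{n_i} > 1$ then $[\g_L, V_{n_i}] \neq 0$ by (\ref{item:bracket_simple_irrep_0}), so being a nonzero submodule of a simple module it must equal $V_{n_i}$.

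For part (\ref{item:bracket_two_irreps}) I would first linearize the bracket. The restriction of $[\,\cdot\,,\,\cdot\,]$ to $V_{n_i} \times V_{n_j}$ is bilinear into $\h = V_{n_1}\oplus\cdots\oplus V_{n_k}$, so by the universal property of the tensor product it induces a linear map $\Phi \colon V_{n_i} \otimes V_{n_j} \to \h$. The crucial point is that $\Phi$ is $\g_L$-equivariant: this is precisely the Jacobi identity $[x, [u,v]] = [[x,u], v] + [u, [x,v]]$ for $x \in \g_L$, which says that $\g_L$ acts on $\h$ by derivations and therefore commutes with $\Phi$ relative to the tensor-product action. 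Composing with the projection $\pi_p \colon \h \to V_{n_p}$ produces equivariant maps $\Phi_p := \pi_p \circ \Phi$, and I set $\Gamma_{i,j}^p := \Phi_p$, so that $\pi_p([V_{n_i}, V_{n_j}]) = \Gamma_{i,j}^p(V_{n_i}, V_{n_j})$ holds by construction. Since $[V_{n_i}, V_{n_j}] \neq 0$, at least one $\Phi_p$ is nonzero; decomposing $V_{n_i} \otimes V_{n_j}$ into simple summands via \Cref{thm:Weyl}, \Cref{thm:Schur} forces any nonzero equivariant map onto the simple module $V_{n_p}$ to vanish unless $V_{n_p}$ occurs in that decomposition, whence $V_{n_p} \subset V_{n_i} \otimes V_{n_j}$. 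Finally, when $i = j$ the bracket is alternating, so $\Phi$ annihilates the symmetric part and factors through $\Lambda^2(V_{n_i})$; thus $V_{n_p} \subset \Lambda^2(V_{n_i})$ and $\Gamma_{i,i}^p$ is skew-symmetric.

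The main obstacle I anticipate is the uniqueness assertion. Schur's Lemma identifies $\Hom_{\g_L}(V_{n_i} \otimes V_{n_j}, V_{n_p})$ with a space of dimension equal to the multiplicity of $V_{n_p}$ in $V_{n_i} \otimes V_{n_j}$, and the equivariant morphism is unique up to scalar only when this multiplicity is one. For the simple Levi subalgebras and modules that arise here --- most notably $\g_L = \s_2(\C)$, where the Clebsch--Gordan rule gives multiplicity-free decompositions --- this is exactly the situation, so $\Gamma_{i,j}^p$ is determined up to the single scalar that becomes a structure constant of $\g$. The careful bookkeeping of which $V_{n_p}$ can appear, and with what multiplicity, is where the substantive work lies.
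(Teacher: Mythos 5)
Your proof follows the paper's argument exactly: the paper dismisses parts (1) and (2) as straightforward, and your submodule-plus-perfectness argument is the standard way to fill them in; for part (3) the paper likewise observes that the Jacobi identity for triples $x\in V_{n_i}$, $y\in V_{n_j}$, $z\in\g_L$ is equivalent to equivariance of the induced map $V_{n_i}\otimes V_{n_j}\to\h$, decomposes the tensor product via \Cref{thm:Weyl}, and concludes by \Cref{thm:Schur}.

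The one place you go astray is your final paragraph: the uniqueness assertion does not depend on $V_{n_p}$ having multiplicity one in $V_{n_i}\otimes V_{n_j}$. The morphism in the statement is precisely $\Gamma_{i,j}^p=\pi_p\circ\Phi$, where $\Phi$ is the linearization of the given bracket; since a bilinear map induces a \emph{unique} linear map on the tensor product, $\Gamma_{i,j}^p$ is uniquely determined by the Lie algebra structure of $\g$. The uniqueness claimed is that of the map satisfying $\pi_p([x,y])=\Gamma_{i,j}^p(x\otimes y)$ for all $x,y$, not uniqueness up to scalar as an element of $\Hom_{\g_L}(V_{n_i}\otimes V_{n_j},V_{n_p})$. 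Multiplicity-one (which holds for $\s_2(\C)$ by Clebsch--Gordan but fails for general simple $\g_L$ --- for instance the adjoint module of $\s_3(\C)$ occurs twice in its own tensor square) only matters for the converse problem treated in \Cref{rmk:equiv_morphisms}, namely enumerating all possible brackets on a given $\g_L$-module. For the proposition as stated, no substantive multiplicity bookkeeping remains, and your proof is complete once this reading is adopted.
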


\begin{proof}
The proof of \Cref{item:bracket_simple_irrep_0} and \Cref{item:bracket_simple_irrep} is straightforward.

\medskip
 The fact that any triple $x\in V_{n_i}$, $y\in V_{n_j}$, $z\in \g_L$ satisfies the Jacobi identity is equivalent to the existence of a $\g_L$-equivariant morphism $[,]\colon V_{n_i}\otimes V_{n_j} \to \h$. 
As a $\g_L$-module, there is a decomposition 
\[
V_{n_i}\otimes V_{n_j} = V_{m_1}\oplus \cdots \oplus V_{m_l}.
\] By Schur's lemma, $\{m_1,\ldots,m_l\}\cap \{n_1,\ldots,n_k \} =\emptyset$ implies $[V_{n_i},V_{n_j}]=0$.
\end{proof}
\medskip
\begin{Cor}
  Let $\g = \g_L \ltimes \h$ be a Lie algebra with simple Levi subalgebra $\g_L$. Given the decomposition $\h = V_{n_1} \oplus \cdots \oplus V_{n_k}$ into simple $\g_L$-modules, if $[V_{n_i},V_{n_j}] \neq 0$, 
then there exist simple $\g_L$-modules $V_{m_1},\ldots ,V_{m_r}\subset V_{n_i}\otimes V_{n_j}$ and unique 
$\g_L$-equivariant morphisms $\Gamma_{i,j}^s :V_{n_i} \otimes V_{n_j} \to V_{m_s}$ such that 
    \[
      [V_{n_i},V_{n_j}] = V_{m_1}\oplus\cdots \oplus V_{m_r}, 
      \quad\text{and}\quad
      [x,y] = \sum\limits_{s=1}^r \Gamma_{i,j}^s (x,y),
      \] 
    for every $x\in V_{n_i}$, $y\in V_{n_j}$. 
In the case when $i = j$, $V_{m_1},\ldots ,V_{m_r}\subset \Lambda^2(V_{n_i})$ and we have a skew-symmetric $\g_L$-equivariant morphism $\Gamma_{i}^s :V_{n_i}\otimes V_{n_i} \to V_{m_s}$.
\end{Cor}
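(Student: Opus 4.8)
The plan is to upgrade the single-component statement of \Cref{prop:Levi_bracket_symp}\,(\ref{item:bracket_two_irreps}) to a statement about the entire image of the bracket. The starting point is the observation, already exploited in the proof of the Proposition, that the restriction of the Lie bracket to $V_{n_i}\times V_{n_j}$ is governed by a single $\g_L$-equivariant linear map
\[
\mu\colon V_{n_i}\otimes V_{n_j}\longrightarrow \h, \qquad \mu(x\otimes y)=[x,y];
\]
equivariance is exactly the content of the Jacobi identity for triples with one entry in $\g_L$, since $z\cdot[x,y]=[[z,x],y]+[x,[z,y]]$ for $z\in\g_L$ matches the tensor-product action. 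When $i=j$ the skew-symmetry of the bracket forces $\mu$ to vanish on the symmetric square, so that $\mu$ factors through $\Lambda^2(V_{n_i})$.

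First I would note that $[V_{n_i},V_{n_j}]=\operatorname{im}\mu$ is a $\g_L$-submodule of $\h$, hence by \Cref{thm:Weyl} it admits a decomposition into simple modules $[V_{n_i},V_{n_j}]=V_{m_1}\oplus\cdots\oplus V_{m_r}$, which (again by \Cref{thm:Weyl}) is a direct summand of $\h$. Being submodules of $\h$, each $V_{m_s}$ is isomorphic to one of the constituents $V_{n_t}$; and since the projection $\pi_s\colon\h\to V_{m_s}$ afforded by this decomposition satisfies $\pi_s\circ\mu\neq 0$, \Cref{thm:Schur} shows that $V_{m_s}$ occurs in the decomposition of $V_{n_i}\otimes V_{n_j}$, i.e. $V_{m_s}\subset V_{n_i}\otimes V_{n_j}$. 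Setting $\Gamma_{i,j}^s:=\pi_s\circ\mu$, each $\Gamma_{i,j}^s$ is $\g_L$-equivariant as a composite of equivariant maps, and since the projections $\pi_s$ sum to the identity on $\operatorname{im}\mu$ we obtain
\[
[x,y]=\mu(x\otimes y)=\sum_{s=1}^{r}\pi_s\bigl(\mu(x\otimes y)\bigr)=\sum_{s=1}^{r}\Gamma_{i,j}^s(x,y)
\]
for all $x\in V_{n_i}$, $y\in V_{n_j}$. In the case $i=j$, each $\Gamma_i^s=\pi_s\circ\mu$ inherits the skew-symmetry of $\mu$, hence factors through $\Lambda^2(V_{n_i})$ and correspondingly $V_{m_s}\subset\Lambda^2(V_{n_i})$.

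For the uniqueness clause I would argue exactly as in \Cref{prop:Levi_bracket_symp}: once the summand $V_{m_s}$ of $[V_{n_i},V_{n_j}]$ is fixed, the requirement that $\Gamma_{i,j}^s$ be the $V_{m_s}$-component of the bracket determines it, as it is literally $\pi_s\circ\mu$. I expect the only genuinely delicate point to be this uniqueness statement: for a general simple $\g_L$ the tensor product $V_{n_i}\otimes V_{n_j}$ need not be multiplicity-free, so $\Hom_{\g_L}(V_{n_i}\otimes V_{n_j},V_{m_s})$ may have dimension greater than one and the morphism is not \emph{abstractly} unique. The correct reading, which the construction above makes precise, is that the bracket itself singles out one such morphism; any residual ambiguity lies only in the a priori choice of the simple decomposition of $[V_{n_i},V_{n_j}]$ inside an isotypic component of $\h$, and once that decomposition is fixed the maps $\Gamma_{i,j}^s$ are determined. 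This is the step I would be most careful to phrase precisely, since the subsequent arguments treat the $\Gamma_{i,j}^s$ as the canonical structure constants of $\g$.
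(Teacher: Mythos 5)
Your argument is correct and follows exactly the route the paper takes: the paper proves \Cref{prop:Levi_bracket_symp} by observing that the Jacobi identity with one entry in $\g_L$ makes the bracket a $\g_L$-equivariant map $V_{n_i}\otimes V_{n_j}\to\h$ and then applies Weyl and Schur, and the Corollary is left as an immediate consequence — precisely your construction $\Gamma_{i,j}^s=\pi_s\circ\mu$. Your closing caveat about uniqueness is also well taken: the paper's ``unique'' must indeed be read as ``determined by the bracket once the simple decomposition of $[V_{n_i},V_{n_j}]$ is fixed,'' since $\Hom_{\g_L}(V_{n_i}\otimes V_{n_j},V_{m_s})$ need not be one-dimensional for general simple $\g_L$ (though it is for $\s_2(\C)$ by Clebsh--Gordan, the case driving the paper's main computations).
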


\begin{Remark}\label{rmk:equiv_morphisms}
  Given $\g = \g_L\ltimes \h$, the Lie algebra structure of $\h$ is uniquely determined by a 
  collection of $\g_L$-equivariant morphisms, when considering $\h$ as a $\g_L$-module. 
  Conversely, in order to {\it construct} a Lie algebra $\h$ from a $\g_L$-module, 
  one first needs to determine the possible equivariant morphisms which define an algebra structure on $\h$. In fact, any scalar multiple of such morphisms defines a multiplicative structure 
  on $\h$. Therefore, $\h$ is a Lie algebra when there exist an appropriate {\it choice} of multiples for which the Jacobi identity holds.
  \end{Remark}

\begin{Prop}\label{prop:der_semisimples}
Let $\g=\g_L \ltimes \h$ be a finite dimensional sympathetic Lie algebra with simple Levi subalgebra $\g_L$. 
Suppose $D\colon\g\to\g$ is a derivation such that $D\vert_{\g_L} = 0$ and 
$D\vert_{V_{n_i}} = \lambda_{n_i} \Id_{V_{n_i}}$ for any simple $\g_L$-module in the decomposition 
of $\h$. Then $D$ is an inner derivation of $\g$ if, and only if, $D\vert_{\h} = 0$.
\end{Prop}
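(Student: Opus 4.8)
The plan is to handle the two implications separately; the reverse implication is immediate, while the forward one hinges on the nilpotency of $\h$. For the reverse direction, suppose $D\vert_\h = 0$. Combined with the hypothesis $D\vert_{\g_L}=0$ and the vector-space decomposition $\g = \g_L \oplus \h$, this gives $D = 0 = \ad_0$, which is inner, so there is nothing further to check.

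For the forward implication I would start from the assumption $D = \ad_x$ for some $x\in\g$ and first localise $x$ inside $\h$. Writing $x = x_L + h$ with $x_L \in \g_L$ and $h \in \h$, and evaluating on $y \in \g_L$, gives $\ad_x(y) = [x_L,y] + [h,y]$, where the first summand lies in $\g_L$ and the second in $\h$ because $\h$ is an ideal. Since $D\vert_{\g_L}=0$, both components must vanish; in particular the $\g_L$-component yields $[x_L,y]=0$ for every $y\in\g_L$, so $x_L \in Z(\g_L)$. As $\g_L$ is simple its centre is trivial, forcing $x_L = 0$ and hence $x = h \in \h$.

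The heart of the argument is then to exploit nilpotency. Because $\g$ is perfect, $\h = \Rad(\g) = \NilRad(\g)$ is a nilpotent Lie algebra, and so $\ad_h\vert_\h$ is a nilpotent endomorphism of $\h$ for any $h\in\h$. On the other hand, the hypothesis $D\vert_{V_{n_i}} = \lambda_{n_i}\Id_{V_{n_i}}$ says precisely that each $V_{n_i}$ is $\ad_h$-invariant and that $\ad_h$ restricts to the scalar $\lambda_{n_i}$ on it. Since the restriction of a nilpotent operator to an invariant subspace is again nilpotent, and a scalar operator $\lambda_{n_i}\Id_{V_{n_i}}$ on the nonzero space $V_{n_i}$ is nilpotent only when $\lambda_{n_i}=0$, I would conclude that every $\lambda_{n_i}$ vanishes, giving $D\vert_\h = 0$.

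I expect the only delicate point to be the reduction to $x \in \h$, where the hypothesis $D\vert_{\g_L}=0$ must be combined with the simplicity (hence trivial centre) of $\g_L$. Once $x$ is known to lie in the nilradical, the nilpotency of $\ad_h\vert_\h$ makes the vanishing of the eigenvalues $\lambda_{n_i}$ essentially forced, so I do not anticipate any further obstacle.
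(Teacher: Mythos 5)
Your proof is correct and follows essentially the same route as the paper: decompose the inner element as $x = x_L + h$ with $x_L\in\g_L$, $h\in\h$, use $D\vert_{\g_L}=0$ together with the simplicity (trivial centre) of $\g_L$ to force $x_L=0$, and then use that $h$ lies in the nilradical so $\ad_h$ is nilpotent, which is incompatible with a nonzero scalar $\lambda_{n_i}$ on $V_{n_i}$. Your write-up is in fact slightly more careful than the paper's, which phrases the decomposition loosely as a semisimple-plus-nilpotent splitting, whereas you justify the nilpotency of $\ad_h\vert_{\h}$ explicitly via the nilpotency of $\h=\NilRad(\g)$.
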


\begin{proof}
Let $D$ be an inner derivation. Write $D$ as the sum of its semisimple and nilpotent parts $D = \ad_{\g}(s + h)$ for some $s\in \g_L$ and $h\in \h$. By definition of $D$, for any $y\in \g_L$,
\[
0 = D(y) =  [s+h, y] = [s,y] + [h,y].
\] 
Since $[s,y]\in \g_L$ and $[h,y]\in \h$, this implies $[s,y] = 0$ and $[h,y] = 0$ for every $y\in \g_L$.  
Thus $s\in Z(\g_L)$ and hence $s=0$ because $\g_L$ is a simple Lie algebra. 
Consequently, $D$ is nilpotent and this implies $\lambda_{n_i} = 0$ for every $i$.
The converse is straightforward.

\end{proof}

\begin{Prop}\label{prop:der_semisimples_sl2}
  Let $\g=\s_2(\C) \ltimes \h$ be a finite dimensional Lie algebra and $V_m, V_p, V_k$ simple $\s_2(\C)$-modules in the decomposition of $\h$ such that $[V_m,V_p]_{\g}=V_k$, or
  $[V_p,V_p]_{\g}=V_k$. For any semisimple derivation $D\in \Der(\g)$ there exist $\lambda, \alpha,\beta, \gamma \in \F$ such that
  \[
  \begin{aligned}
  \alpha + \beta - \gamma & = \lambda( m+ p - k), \quad \text{or} \\
  2 \beta - \gamma & = \lambda ( 2p - k). 
  \end{aligned}
  \]
  \end{Prop}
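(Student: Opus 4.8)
\emph{Plan.} The idea is that, modulo the nilradical, a semisimple derivation must act on the Levi factor as an inner derivation coming from the Cartan subalgebra, while the complementary part acts by scalars on each simple module. The additivity of these scalars under the bracket, combined with the $\ad(H)$-weights, will produce the stated identity. Throughout I write $\{E,H,F\}$ for the standard basis of $\s_2(\C)$, with $[H,E]=2E$, $[H,F]=-2F$, $[E,F]=H$, so that a highest weight vector $v_n\in V_n$ satisfies $\ad(H)v_n=n\,v_n$.

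First I would record a structural normalisation. Since $\h=\Rad(\g)$ is a characteristic ideal, every derivation preserves it, so $D(\h)\subseteq\h$ and $D$ induces a semisimple derivation $\bar D$ of $\g/\h\cong\g_L$. As $\g_L$ is semisimple, $\bar D=\ad(s)$ for an $\ad$-semisimple $s\in\g_L$, which after an inner automorphism I may take to be $s=\lambda H$ for some $\lambda\in\C$. The delicate point is to promote this to an honest statement on $\g$: I would invoke the existence of a $D$-invariant Levi subalgebra (Mostow's theorem, valid because $\overline{\exp(\C D)}$ is a linearly reductive group of automorphisms) and, conjugating by an inner automorphism carrying it to $\g_L$, assume that $D$ stabilises $\g_L$ with $D|_{\g_L}=\lambda\,\ad(H)|_{\g_L}$; explicitly $DH=0$, $DE=2\lambda E$, $DF=-2\lambda F$. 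I expect this reduction to be the main obstacle, since it is the only place where a genuine structural input (rather than linear algebra) enters; all scalars produced below are unaffected by these conjugations, because highest weights and the relation $[V_m,V_p]=V_k$ are preserved by automorphisms.

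With this normalisation I set $D':=D-\lambda\,\ad_\g(H)$. A direct check using $[D,\ad(x)]=\ad(Dx)$ together with $DH=0$, $DE=2\lambda E$, $DF=-2\lambda F$ gives $[D',\ad(E)]=[D',\ad(H)]=[D',\ad(F)]=0$, so $D'$ commutes with the $\g_L$-action on $\g$; moreover $D'$ is semisimple, being the difference of the commuting semisimple operators $D$ and $\lambda\,\ad_\g(H)$ (they commute since $DH=0$). Choosing the decomposition of $\h$ into simple $\g_L$-modules compatibly with $D'$, \Cref{thm:Schur} forces $D'|_{V_n}=\delta_n\,\Id_{V_n}$ for a scalar $\delta_n$, so that
\[
D|_{V_n}=\delta_n\,\Id_{V_n}+\lambda\,\ad(H)|_{V_n}.
\]
In particular $Dv_n=(\delta_n+\lambda n)\,v_n$ on a highest weight vector $v_n$.

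Finally I would exploit the bracket. Since $D'$ is a derivation acting as the scalar $\delta_n$ on each $V_n$, applying it to a nonzero bracket $[x,y]\in V_k$ with $x\in V_m$, $y\in V_p$ gives $(\delta_m+\delta_p)[x,y]=D'[x,y]=\delta_k[x,y]$, whence $\delta_m+\delta_p=\delta_k$ (such $x,y$ exist because $[V_m,V_p]=V_k\neq0$). Setting $\alpha=\delta_m+\lambda m$, $\beta=\delta_p+\lambda p$, $\gamma=\delta_k+\lambda k$ (the eigenvalues of $D$ on the highest weight vectors of $V_m,V_p,V_k$) and using $\delta_m+\delta_p-\delta_k=0$ yields
\[
\alpha+\beta-\gamma=\lambda(m+p-k).
\]
The case $[V_p,V_p]=V_k$ is identical with $V_m=V_p$, giving $2\beta-\gamma=\lambda(2p-k)$; the skew-symmetry of the bracket (so that $V_k\subset\Lambda^2 V_p$) plays no role in this scalar bookkeeping. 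All the content is concentrated in the invariant-Levi reduction of the second paragraph, after which the argument is pure representation-theoretic linear algebra.
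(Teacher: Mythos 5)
Your proposal is correct, and while it starts from the same normalisation as the paper, its main body runs along a genuinely different track. The paper simply asserts $D\vert_{\s_2(\C)} = \lambda\,\ad_{\s_2(\C)}(H)$ and then works entirely in the explicit weight bases of \Cref{prop:explicit_irreps}: applying the derivation identity to $[E,e_i]$ gives the recursion $\alpha_{i-1} = 2\lambda + \alpha_i$, hence $D\vert_{V_m} = \Diag(\alpha_0,\alpha_0-2\lambda,\ldots,\alpha_0-2m\lambda)$, and the stated relation is extracted by applying $D$ to $[e_i,f_j]=C_{i,j}\,g_{l(i,j)}$ together with the weight computation $l(i,j)=i+j-\tfrac{1}{2}(m+p-k)$. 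You instead pass to $D'=D-\lambda\,\ad_{\g}(H)$, check that $D'$ is a semisimple derivation commuting with the $\g_L$-action, invoke Schur's Lemma (\Cref{thm:Schur}) to get scalars $\delta_n$ on the simple summands, and read off the identity from the additivity $\delta_m+\delta_p=\delta_k$; this eliminates all explicit weight bookkeeping and makes the eigenvalue pattern $\alpha_i=\alpha_0-2\lambda i$ a corollary rather than a computation. Your route also buys rigour precisely where the paper is silent: the paper never justifies that $D$ preserves the Levi factor or that its restriction may be taken to be $\lambda\,\ad(H)$, whereas your reduction via a $D$-invariant Levi subalgebra (legitimate, since the Zariski closure of $\exp(\C D)$ is a torus of automorphisms) plus conjugacy of Levi subalgebras supplies exactly this step, and you correctly note that the conjugations do not disturb the scalars in the conclusion. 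One shared caveat: when $\h$ has isotypic multiplicity, re-choosing the decomposition compatibly with $D'$ may replace the given summands $V_m,V_p,V_k$, so strictly one should add that the bracket pairing between the corresponding isotypic components stays nonzero, whence some summands of the same highest weights still pair nontrivially and the additivity argument applies to them; but this is the same tacit ``without loss of generality'' the paper itself makes in assuming $D$ diagonal in the standard bases, so your proof is no weaker than the original on that point.
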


\begin{proof}
Since $\s_2(\C)$ is a subalgebra of $\g$, there exists $\lambda \in \F$ such that
\[
D\vert_{\s_2(\C)} = \lambda \ad_{\s_2(\C)}(H).
\]

Suppose that $V_m =\langle e_i \rangle$, $V_p = \langle f_i \rangle$ and
$V_k = \langle g_i \rangle$, where the basis for $V_m,V_p$ and $V_k$ are as in \Cref{prop:explicit_irreps} Since $D$ is a semisimple derivation, we assume without loss of generality that $D$ is of the form
\[
D(e_i) = \alpha_i e_i, \quad D(f_i) = \beta_i f_i, \quad D(g_i) = \gamma_i g_i.
\]  

Given any element $e_i \in V_m$, we have  
\[
D([E,e_i]) = \alpha_{i-1} [E, e_i],
\]
and
\[
\begin{aligned}
\null
[DE, e_i] + [E, De_i] & = 2 \lambda [E,e_i] + \alpha_i [E,e_i] \\
&= (2\lambda + \alpha_i) [E, e_i].
\end{aligned}
\]
Using the fact that $D$ is a derivation we obtain
\[
\alpha_{i-1} = 2\lambda + \alpha_i.
\]
Equivalently,
\[
[D\vert_{V_m}] = \Diag( \alpha_0, \alpha_0 - 2\lambda, \cdots, \alpha_0 - m(2\lambda) ).
\]

Analogously,
\[
\begin{aligned}
\null
[D\vert_{V_p}]  & = \Diag( \beta_0, \beta_0 - 2\lambda, \cdots, \beta_0 - p(2\lambda) ) \\
[D\vert_{V_k}] & = \Diag( \gamma_0, \gamma_0 - 2\lambda, \cdots, \gamma_0 - k(2\lambda) ).
\end{aligned}
\]

Set $\alpha=\alpha_0$, $\beta=\beta_0$ and $\gamma = \gamma_0$. Notice that the product 
$[V_m,V_p]_{\g} = V_k$ is completely determined by an $\s_2(\C)$-equivariant morphism which leaves 
weights invariant. In consequence, if
\begin{equation}\label{eqn:bracket_weight_invariant}
  [e_i, f_j] = C_{i,j} g_{l(i,j)}, \quad C_{i,j}\in \F,
\end{equation} 
then $e_i\otimes f_j \in V_m \otimes V_p$ and $g_{l(i,j)} \in V_k$ have equal weights for every $i,j$. 
A direct calculation shows that  
$l(i,j) = i + j - \frac{1}{2} (m + p - k)$. Applying $D$ to both sides of \Cref{eqn:bracket_weight_invariant} and using the previous expression for $l(i,j)$, we can conclude that 
\[
\alpha + \beta - \gamma = \lambda ( m + p - k).
\]

Similarly, when $m=p$ one obtains
\[
2 \beta - \gamma = \lambda( 2p -k).
\]

\end{proof}

\begin{Remark}
Let $\g=\g_L \ltimes \h$ be a Lie algebra with simple Levi subalgebra $\g_L$. Recall that $\g$ is a sympathetic Lie algebra if $\g$ does not admit any outer derivations. We use \Cref{prop:der_semisimples} and \Cref{prop:der_semisimples_sl2} as criteria to show the existence of outer derivations for certain Lie algebras $\g$ as follows.
  When $\g_L=\s_2(\C)$, \Cref{prop:der_semisimples_sl2} enable us to describe semisimple derivations 
  in terms of a system of homogenous linear equations, which we use to construct outer derivations. 
  When $\g_L\neq \s_2(\C)$, we construct outer derivations of the form described in \Cref{prop:der_semisimples}. 
\end{Remark}

\begin{Remark}
Solving the system from \Cref{prop:der_semisimples_sl2} requires solving a homogenous system of at most $k$ equations with $k+1$ unknowns, where $s(\g)=k$. 
This has a huge computational advantage to calculating directly $\Der(\g)$, which requires solving a system of $(\dim_{\C}\g)^3$ equations, for $\dim_{\C}\g =  n_1 + \cdots + n_k + k + 3$.
\end{Remark}

\begin{Prop}\label{prop:derived_ideal_in_Z}
  Let $\g=\g_L \ltimes \h$ be a perfect Lie algebra such that its Levi subalgebra $\g_L$ is simple and $[\h,\h] \subset Z(\h)$. Then $\g$ is not sympathetic.
  \end{Prop}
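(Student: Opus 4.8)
The plan is to show that $\g$ possesses a non-trivial outer derivation, so that the third defining property of a sympathetic Lie algebra, namely $\Der(\g)=\ad(\g)$, fails; since $\g$ is already assumed perfect, this is enough to conclude that $\g$ is not sympathetic. The key tool will be \Cref{prop:der_semisimples}: it suffices to produce a derivation $D$ with $D\vert_{\g_L}=0$ that acts as a scalar on each simple $\g_L$-submodule of $\h$ and satisfies $D\vert_\h\neq 0$, for then $D$ is forced to be outer.

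I would take $D$ to be the grading derivation associated with the two-step nilpotent structure of $\h$. Since $\g_L$ is simple, hence semisimple, \Cref{thm:Weyl} yields a $\g_L$-module decomposition $\h=\h_1\oplus\h_2$ with $\h_2:=[\h,\h]$; here $\h_2$ is a $\g_L$-submodule because the bracket is $\g_L$-equivariant, and $\h_1\neq 0$ because $\h$ is a non-zero nilpotent algebra, so $[\h,\h]\subsetneq\h$. Define $D\colon\g\to\g$ by
\[
D\vert_{\g_L}=0,\qquad D\vert_{\h_1}=\Id_{\h_1},\qquad D\vert_{\h_2}=2\,\Id_{\h_2}.
\]
Choosing the decomposition of $\h$ into simple modules compatibly with $\h=\h_1\oplus\h_2$, the map $D$ acts as the scalar $1$ on the summands inside $\h_1$ and as the scalar $2$ on those inside $\h_2$, so it has exactly the shape required by \Cref{prop:der_semisimples}.

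It remains to verify that $D$ is a derivation, and this is precisely where the hypothesis $[\h,\h]\subset Z(\h)$ is used. On $\g_L$ the Leibniz rule holds trivially. For $x\in\g_L$ and $v=v_1+v_2$ with $v_i\in\h_i$, the $\g_L$-invariance of $\h_1$ and $\h_2$ gives $D([x,v])=[x,v_1]+2[x,v_2]=[x,Dv]=[Dx,v]+[x,Dv]$, since $Dx=0$. For $u,v\in\h$, writing $u=u_1+u_2$ and $v=v_1+v_2$, the inclusion $\h_2\subset Z(\h)$ annihilates every bracket involving $u_2$ or $v_2$; hence $[u,v]=[u_1,v_1]\in\h_2$, giving $D([u,v])=2[u_1,v_1]$, and the same cancellations reduce $[Du,v]+[u,Dv]$ to $2[u_1,v_1]$, so the two sides agree. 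Therefore $D\in\Der(\g)$, and since $D\vert_{\h_1}=\Id_{\h_1}\neq 0$ we have $D\vert_\h\neq 0$; by \Cref{prop:der_semisimples}, $D$ is outer, so $\g$ is not sympathetic.

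The step I expect to demand the most care is the bracket computation on $\h\times\h$: one must check that, because $[\h,\h]\subset Z(\h)$, all mixed terms and all terms of ``degree two'' vanish, so that both $D([u,v])$ and $[Du,v]+[u,Dv]$ collapse to $2[u_1,v_1]$. Everything else is structural bookkeeping, and the degenerate case $[\h,\h]=0$, where $\h$ is abelian and $D=\Id_\h$, is covered by taking $\h_2=0$.
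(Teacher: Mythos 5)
Your proposal is correct and takes essentially the same route as the paper: both construct a semisimple derivation vanishing on $\g_L$, acting by two distinct scalars on a $\g_L$-module splitting of the two-step nilpotent radical, and both conclude via \Cref{prop:der_semisimples} that it is outer. The only difference is cosmetic --- the paper splits $\h = V \oplus Z(\h)$ with scalars $\tfrac{1}{2}$ and $1$, while you split $\h = \h_1 \oplus [\h,\h]$ with scalars $1$ and $2$; since $[\h,\h]\subset Z(\h)$, both Leibniz verifications reduce to the same cancellation $[u,v]=[u_1,v_1]$.
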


\begin{proof} As $\g_L$-modules, $\h$, $[\h,\h]$ and $Z(\h)$ are completely reducible. 
This implies that there exists a $\g_L$-submodule $V$ such that $\h= V \oplus Z(\h)$. Define $D\colon\g \to \g$ by
  \[
  D\vert_{\g_L} \equiv 0, \quad D\vert_{Z(\h)} = \Id_{Z(\h)}, \quad D\vert_V = \frac{1}{2} \Id_V.
  \] 
  A straightforward calculation shows that $D$ is a derivation and by \Cref{prop:der_semisimples}, $D$ is an outer derivation. 
  \end{proof}

\subsection{The radical of a sympathetic Lie algebra}\label{subsec:rad_of_symp}

\subsubsection{Case $s(\g)=1,2$} 
\begin{Cor}\label{cor:1_2_repr}
Let $\g=\g_L \ltimes\h$ be a perfect Lie algebra with simple Levi subalgebra $\g_L$ is simple.  If $s(\g)=1, 2$, then $\g$ is not sympathetic.
\end{Cor}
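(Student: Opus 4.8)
The plan is to show that both cases collapse to a single structural fact---namely $[\h,\h]\subseteq Z(\h)$---after which \Cref{prop:derived_ideal_in_Z} immediately gives the conclusion. Since $\g$ is perfect, $\h=\Rad(\g)$ is nilpotent, so its centre $Z(\h)$ is a nonzero $\g_L$-submodule of $\h$ (nonzero because a nonzero nilpotent Lie algebra has nontrivial centre).

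First I would run a length count on $\h$ as a $\g_L$-module. When $s(\g)\le 2$, the module $\h$ has at most two simple summands, while $Z(\h)$ contributes at least one; hence either $Z(\h)=\h$, or the quotient $\bar\h:=\h/Z(\h)$ is a single simple $\g_L$-module. In the first case $\h$ is abelian and $[\h,\h]=0\subseteq Z(\h)$, so it suffices to treat the second case.

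So suppose $\bar\h$ is simple. Because $\g_L$ acts on $\h$ by derivations and $Z(\h)$ is $\g_L$-invariant, the Lie bracket descends to a $\g_L$-equivariant bracket on $\bar\h$ (cf.\ \Cref{prop:Levi_bracket_symp}). Thus $[\bar\h,\bar\h]$ is a $\g_L$-submodule of the simple module $\bar\h$, and is therefore $0$ or all of $\bar\h$. But $\bar\h$ is a quotient of the nilpotent algebra $\h$, hence nilpotent, and a nonzero nilpotent Lie algebra is never perfect; so $[\bar\h,\bar\h]\neq\bar\h$ and consequently $[\bar\h,\bar\h]=0$. Translating back, this says precisely $[\h,\h]\subseteq Z(\h)$. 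In both cases $[\h,\h]\subseteq Z(\h)$, and \Cref{prop:derived_ideal_in_Z} shows $\g$ is not sympathetic.

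The step I expect to need the most care is the length argument guaranteeing that $\bar\h$ is simple, specifically when $V_{n_1}\cong V_{n_2}$. There $\h$ has a repeated composition factor, so its submodule lattice is richer; I would resolve this by noting that the $\g_L$-submodules of $V\oplus V$ correspond to subspaces of $\C^2$, so every proper nonzero submodule is again isomorphic to the simple module $V$, and the inequality $\mathrm{length}(\h)-\mathrm{length}(Z(\h))\le 1$ still forces $\bar\h$ to be simple. Everything else is a direct application of nilpotency together with Schur's lemma.
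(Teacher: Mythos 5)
Your proof is correct and follows essentially the same route as the paper: both reduce the statement to the containment $[\h,\h]\subseteq Z(\h)$ and then invoke \Cref{prop:derived_ideal_in_Z}. The paper simply asserts this containment in one line, whereas you justify it carefully via the length count on $\h/Z(\h)$ and the fact that a nonzero nilpotent algebra cannot be perfect, including the repeated-factor case $V_{n_1}\cong V_{n_2}$ which the paper leaves implicit.
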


\begin{proof}
If $s(\g)=1, 2$, then $[\h,\h]\subset Z(\h)$ and \Cref{prop:derived_ideal_in_Z} implies that $\g$ fails to be sympathetic.
\end{proof}

\subsubsection{Case $s(\g)=3$}\label{sec:3_repr}

\begin{Conv}
  From now on, we assume that if $[V_{n_i},V_{n_j}] \neq 0$, there exists a unique $p$, $1 \leq p \leq k$, such that $[V_{n_i},V_{n_j}] = V_{n_p}$. 
  Under this convention we will prove that if $s(\g) \leq 4$, then $Z(\h)$ is a simple $\g_L$-module. 
  For simplicity, we assume that $Z(\h)$ is a simple $\g_L$-module whenever $s(\g) \geq 5$. 
\end{Conv}

In this subsection we show the following 
\begin{Theorem}\label{thm:3_repr}
  Suppose that $\g=\g_L \ltimes\h$ is a Lie algebra, with simple Levi subalgebra $\g_L$, satisfying $[\g,\g]=\g$, $Z(\g) = 0$ and $s(\g)=3$. Then $\g$ is not sympathetic.
\end{Theorem}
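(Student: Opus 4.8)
The plan is to show that when $s(\g)=3$, the nilpotent radical $\h$ is forced into such a rigid structure that one can always construct an outer semisimple derivation, contradicting property (\ref{no_outer_der}). The strategy mirrors the $s(\g)=1,2$ case: I expect that the constraint $s(\g)=3$ severely limits how the three simple modules can bracket with each other, and that in every admissible configuration $[\h,\h]$ either lands in $Z(\h)$ (so \Cref{prop:derived_ideal_in_Z} applies directly) or the nilpotency filtration splits $\h$ into pieces on which a diagonal derivation with distinct eigenvalues can be defined.

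First I would label the three modules as $\h = V_a \oplus V_b \oplus V_c$ and exploit nilpotency of $\h$. Since $\h$ is nilpotent, $0\neq Z(\h)$ is a nonzero $\g_L$-submodule, which by the Convention is a direct sum of some of the $V_a,V_b,V_c$. The lower central series must terminate, so at least one module lies in $Z(\h)$ and receives all nontrivial brackets. I would next enumerate the possible bracket relations: each nonzero bracket $[V_i,V_j]$ must equal some $V_p$ with $V_p \subset V_i\otimes V_j$ (by \Cref{prop:Levi_bracket_symp}\eqref{item:bracket_two_irreps} and the Convention). Because $\g$ is \emph{perfect}, we also need $[\g,\g]=\g$, which forces $\h=[\g,\g]\cap\h=[\g_L,\h]+[\h,\h]$; since $[\g_L,V_i]\subseteq V_i$ this gives no new generators, so every module must be reachable either as a nontrivial $\g_L$-image of itself (automatic when $\dim V_i>1$) or inside some $[\h,\h]$ term. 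This perfectness condition, combined with $Z(\g)=0$, should rule out trivial modules and pin down which module is central.

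The key case analysis is then over the finitely many bracket patterns on three modules. The nilpotency of $\h$ means we cannot have a ``cycle'' of brackets; concretely the derived series $\h\supset[\h,\h]\supset[\h,[\h,\h]]\supset\cdots$ must reach $0$, so the bracket structure is essentially filtered/graded. With only three modules, $[\h,[\h,[\h,\h]]]=0$ automatically once the grading has depth at most $3$, and one checks that $[\h,\h]$ is contained in the top (most central) graded piece. In the generic subcase I expect $[\h,\h]\subset Z(\h)$, so \Cref{prop:derived_ideal_in_Z} finishes. In the remaining subcase, where the grading has exactly three levels $V_a, V_b, V_c$ with $[V_a,V_b]=V_c$ and $V_c\subset Z(\h)$, I would construct the diagonal map $D$ acting as the scalar $\lambda_a$ on $V_a$, $\lambda_b$ on $V_b$, $\lambda_c$ on $V_c$ with $D|_{\g_L}=0$. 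By \Cref{prop:der_semisimples} this $D$ is a derivation precisely when the eigenvalues are compatible with the single bracket relation, i.e. $\lambda_a+\lambda_b=\lambda_c$, and it is \emph{inner} only if all $\lambda_i=0$. Choosing, say, $\lambda_a=\lambda_b=1,\ \lambda_c=2$ yields a nonzero semisimple outer derivation.

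\textbf{The main obstacle} I anticipate is verifying that the enumeration of bracket patterns is genuinely exhaustive and that in each surviving pattern the candidate diagonal $D$ actually satisfies the Leibniz rule on \emph{all} brackets simultaneously, not just the one relation used to calibrate its eigenvalues. The subtlety is that a single module pair could, a priori, bracket into more than one module were it not for the Convention; under the Convention each bracket is a single $V_p$, which makes the system of eigenvalue constraints a small homogeneous linear system (at most $s(\g)=3$ equations in $4$ unknowns counting $\lambda$), guaranteed by the Remark following \Cref{prop:der_semisimples_sl2} to have a nontrivial solution. Thus once I confirm the constraints are consistent and underdetermined, the existence of a nonzero outer derivation is automatic, and the theorem follows. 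The care needed is purely in the combinatorial bookkeeping of which brackets can be nonzero, which I would organise by the depth of the module in the central series of $\h$.
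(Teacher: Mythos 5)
Your overall architecture matches the paper's: dispose of the cases where $[\h,\h]\subset Z(\h)$ via \Cref{prop:derived_ideal_in_Z}, and in the one surviving case exhibit a diagonal semisimple derivation that \Cref{prop:der_semisimples} certifies is outer. But the combinatorial step you yourself flag as the main obstacle is exactly where the write-up goes wrong, in two concrete ways. First, your identification of the surviving case is incorrect: you describe it as a grading with a single relation $[V_a,V_b]=V_c$ and $V_c\subset Z(\h)$, and you assert that $[\h,\h]$ always lies in the most central graded piece. That configuration has $[\h,\h]=V_c\subset Z(\h)$ and is already killed by \Cref{prop:derived_ideal_in_Z}; it is not the remaining case. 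The genuine remaining case, which the paper derives, is $Z(\h)=V_k$ with $\h^2=V_p\oplus V_k\not\subset Z(\h)$ and $\h^3=V_k$: since any bracket touching $\h^2$ lands in $\h^3=V_k$, the summand $V_p\subset\h^2$ can only arise as $[V_m,V_m]=V_p$, then $\h^3=V_k$ forces $[V_m,V_p]=V_k$, and Jacobi on $(V_m,V_m,V_p)$ yields $[V_p,V_p]=0$. So the surviving table has \emph{two} relations, one of them a self-bracket. Second, your calibrated eigenvalues $\lambda_a=\lambda_b=1$, $\lambda_c=2$ fail the Leibniz rule on this actual table: the self-bracket $[V_m,V_m]=V_p$ imposes $\lambda_p=2\lambda_m$, which together with $\lambda_m+\lambda_p=\lambda_k$ confines the solutions to the ray $(\lambda_m,\lambda_p,\lambda_k)=t(1,2,3)$ --- the paper's choice $(\tfrac12,1,\tfrac32)$ --- and excludes $(1,1,2)$.

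Your fallback principle --- that the eigenvalue constraints form an underdetermined homogeneous system, hence admit a nontrivial solution, which is then outer by \Cref{prop:der_semisimples} --- would repair this, since nilpotency gives $[\h^2,\h^2]\subset\h^4=0$, so at most the two relations above occur and the system in three unknowns has a nonzero solution. But as written you neither derive the actual relations nor verify this count, and the Remark you cite for solvability concerns the $\s_2(\C)$-specific system of \Cref{prop:der_semisimples_sl2} (which carries the extra parameter $\lambda$ from $D\vert_{\g_L}=\lambda\,\ad(H)$), not the general simple $\g_L$ setting needed here, where $D\vert_{\g_L}\equiv 0$. To close the proof you must actually carry out the enumeration: show that $Z(\h)$ with two or three summands forces $[\h,\h]\subset Z(\h)$, rule out $\h^2=Z(\h)$ by \Cref{prop:derived_ideal_in_Z}, and in the case $Z(\h)=V_k$ pin down the unique table $[V_m,V_m]=V_p$, $[V_m,V_p]=V_k$ before solving for the eigenvalues.
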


\begin{proof}
We determine the {\it multiplication table} for $\h$ using that $\h$ is nilpotent. 
Suppose that $\h= V_m \oplus V_p \oplus V_k$. 
Since $\h$ is a nilpotent subalgebra, 
$Z(\h) = V_m \oplus V_p \oplus V_k$ or $Z(\h) = V_p \oplus V_k$ imply $[\h,\h] \subset Z(\h)$ 
and, by \Cref{prop:derived_ideal_in_Z}, the conclusion of \Cref{thm:3_repr} holds. 

\medskip
Assume that $Z(\h) = V_k$ and notice that $\h^2 = [\h,\h]\neq \h$. Now, 
by \Cref{prop:derived_ideal_in_Z} if $\h^2=Z(\h)=V_k$ then $\g$ is not sympathetic. 
As a result, $\h^2$ must be equal to exactly two 
 simple $\g_L$-modules
\[
\h^2 =[\h,\h] = V_p \oplus V_k, \quad\text{and} \quad
\h^3= V_k = Z(\h).
\]
The first equation implies that $[V_m,V_m]= V_p$ whereas the second implies that $[V_m,V_p]=V_k$. 
From this follows,
\[\begin{aligned}
  0 &= [V_m,[V_m,V_p]] + [V_m,[V_p,V_m]] + [V_p, [V_m,V_m]]\\
    & = [V_m,V_k] + [V_m,V_k] + [V_p, V_p]\\
    & = [V_p, V_p].\\
\end{aligned}
\]

Therefore the only possible multiplication is given by
\[
\begin{tabular}{| c | c | c | c |}
\hline
$[,]$ & $V_m$ & $V_p$ &  $V_k$ \\ \hline
$V_m$ & $V_p$ & $V_k$ &  0 \\ \hline
$V_p$ & $V_k$ & $0$ &  0 \\ \hline
$V_k$ & $0$ & $0$ &  0 \\
\hline
\end{tabular} 
\] Notice that, by \Cref{prop:der_semisimples}, the derivation $D\vert_{\g_L} \equiv 0$,  $D\vert_{V_m} = \frac{1}{2} \Id\vert_{V_m}$, $D\vert_{V_p} = \Id\vert_{V_p}$ and $D\vert_{V_k} = \frac{3}{2} \Id\vert_{V_k}$ is an outer derivation.
\end{proof}

\subsubsection{Case $s(\g)=4$}
A consequence of \Cref{thm:3_repr} is the following. 
\begin{Theorem}\label{thm:symp_at_least_4_repr}
If $\g$ is a sympathetic Lie algebra, then $s(\g) \geq 4$.
\end{Theorem}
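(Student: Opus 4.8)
The plan is to show that the assumption $s(\g) \leq 3$ is incompatible with $\g$ being sympathetic, which reduces immediately to the cases already handled. First I would recall the logical structure: the cases $s(\g)=1$ and $s(\g)=2$ are dispatched by \Cref{cor:1_2_repr}, while the case $s(\g)=3$ is dispatched by \Cref{thm:3_repr}. Since $\h \neq 0$ whenever $\g$ is not semisimple (and a sympathetic Lie algebra that is non-semisimple must have nontrivial radical), we have $s(\g) \geq 1$. Thus the only way to conclude $s(\g) \geq 4$ is to eliminate $s(\g) \in \{1,2,3\}$ one by one.

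The key step is therefore purely a matter of assembling the previous results. I would argue by contraposition: suppose $\g$ is sympathetic with simple Levi subalgebra $\g_L$. By definition of sympathetic, $\g$ satisfies $Z(\g)=0$, $[\g,\g]=\g$ (so $\g$ is perfect), and $\Der(\g)=\ad(\g)$. In particular $\g$ is perfect, so the hypotheses of \Cref{cor:1_2_repr} are met. If $s(\g)=1$ or $s(\g)=2$, then \Cref{cor:1_2_repr} forces $\g$ to be non-sympathetic, a contradiction. If $s(\g)=3$, then since $\g$ satisfies $[\g,\g]=\g$ and $Z(\g)=0$, the hypotheses of \Cref{thm:3_repr} hold, and that theorem again yields that $\g$ is non-sympathetic, a contradiction. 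Hence $s(\g) \geq 4$.

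The only subtlety worth flagging — and the nearest thing to an obstacle — is the edge case $s(\g)=0$, i.e. whether $\h$ could be empty so that $\g = \g_L$ is semisimple. This is not really an obstacle but should be addressed for completeness: if $s(\g)=0$ then $\g$ is semisimple, which is the degenerate (semisimple) case the notion of sympathetic is designed to exclude from interest; the theorem is about genuinely sympathetic algebras, and one typically phrases the result for non-semisimple $\g$. I would simply note that a non-semisimple sympathetic Lie algebra has $\h \neq 0$, hence $s(\g) \geq 1$, and then the above elimination gives $s(\g) \geq 4$.

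Since the proof is a one-line synthesis, the presentation I would use is just to cite \Cref{cor:1_2_repr} and \Cref{thm:3_repr} and observe that together they rule out $s(\g) \in \{1,2,3\}$. No new computation is required, and the statement is correctly labelled ``a consequence of \Cref{thm:3_repr}'' in the excerpt. The entire content of the argument is the case analysis
\[
s(\g) \in \{1,2\} \implies \g \text{ not sympathetic}, \qquad s(\g)=3 \implies \g \text{ not sympathetic},
\]
so that a sympathetic $\g$ must satisfy $s(\g) \geq 4$.
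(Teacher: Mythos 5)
Your proof is correct and matches the paper's own argument exactly: the paper states this theorem as an immediate consequence of \Cref{cor:1_2_repr} (eliminating $s(\g)=1,2$ via perfectness) and \Cref{thm:3_repr} (eliminating $s(\g)=3$ using $[\g,\g]=\g$ and $Z(\g)=0$), which is precisely your case analysis. Your remark on the degenerate semisimple case $s(\g)=0$ is a reasonable completeness check and does not diverge from the paper's approach.
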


Now we are interested in determining all the possible multiplicative structures of nilpotent Lie algebras $\h$ in the case when $s(\g) = 4$, i.e. $\h=V_n \oplus V_m \oplus V_p \oplus V_k$.
\begin{Theorem}\label{thm:posibles_mult}
  Suppose that $\g=\g_L \ltimes\h$ 
  is a perfect Lie algebra with simple Levi subalgebra $\g_L$, $\h= V_n \oplus V_m \oplus V_p \oplus V_k$ and $Z(\h)=V_k$. The algebra structure of $\h$ is completely determined by one of the following
  \begin{enumerate}
  \item $[\h,\h] = Z(\h)$ or,
  \item if $[\h,\h] = V_m \oplus V_p \oplus V_k$, then 
  \[
    \begin{tabular}{| c | c | c | c | c |}
    \hline
    $[,]$ & $V_n$ & $V_m$ & $V_p$ & $V_k$ \\ \hline
    $V_n$ & $V_m$ & $V_p$ & $V_k$ & $0$ \\ \hline
    $V_m$ & $V_p$ & $V_k/0$ & $0$ & $0$ \\ \hline
    $V_p$ & $V_k$ & $0$ & $0$ & $0$ \\ \hline
    $V_k$ & $0$ & $0$ & $0$ & $0$ \\
    \hline
    \end{tabular}
  \] 
  \item or if $[\h,\h] = V_p \oplus V_k$, then 
  \[
    \begin{tabular}{| c | c | c | c | c |}
    \hline
    $[,]$ & $V_n$ & $V_m$ & $V_p$ & $V_k$ \\ \hline
    $V_n$ & $V_p/V_k/0$ & $V_p$ & $V_k$ & $0$ \\ \hline
    $V_m$ & $V_p$ & $V_p/V_k/0$ & $V_k$ & $0$ \\ \hline
    $V_p$ & $V_k$ & $V_k$ & $0$ & $0$ \\ \hline
    $V_k$ & $0$ & $0$ & $0$ & $0$ \\
    \hline
    \end{tabular} 
  \] 
  \[
  \begin{tabular}{| c | c | c | c | c |}
  \hline
  $[,]$ & $V_n$ & $V_m$ & $V_p$ & $V_k$ \\ \hline
  $V_n$ & $V_p/V_k/0$ & $V_p$ & $V_k$ & $0$ \\ \hline
  $V_m$ & $V_p$ & $V_k/0$ & $0$ & $0$ \\ \hline
  $V_p$ & $V_k$ & $0$ & $0$ & $0$ \\ \hline
  $V_k$ & $0$ & $0$ & $0$ & $0$ \\
  \hline
  \end{tabular} 
  \] 
  \[
  \begin{tabular}{| c | c | c | c | c |}
  \hline
  $[,]$ & $V_n$ & $V_m$ & $V_p$ & $V_k$ \\ \hline
  $V_n$ & $V_p$ & $0$ & $V_k$ & $0$ \\ \hline
  $V_m$ & $0$ & $V_k$ & $0$ & $0$ \\ \hline
  $V_p$ & $V_k$ & $0$ & $0$ & $0$ \\ \hline
  $V_k$ & $0$ & $0$ & $0$ & $0$ \\
  \hline
  \end{tabular} 
  \] 
  \[
  \begin{tabular}{| c | c | c | c | c |}
  \hline
  $[,]$ & $V_n$ & $V_m$ & $V_p$ & $V_k$ \\ \hline
  $V_n$ & $V_p$ & $V_k$ & $V_k$ & $0$ \\ \hline
  $V_m$ & $V_k$ & $V_k/0$ & $0$ & $0$ \\ \hline
  $V_p$ & $V_k$ & $0$ & $0$ & $0$ \\ \hline
  $V_k$ & $0$ & $0$ & $0$ & $0$ \\
  \hline
  \end{tabular} 
  \] 
  \end{enumerate}
  \end{Theorem}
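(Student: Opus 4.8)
The plan is to exploit the nilpotency of $\h$ (which holds since $\g$ is perfect) together with the hypothesis $Z(\h)=V_k$, organizing the whole argument around the lower central series $\h\supseteq\h^2\supseteq\h^3\supseteq\cdots$ and measuring each term by how many of the four simple modules it contains. First I would record two structural facts. The algebra $\h$ is not abelian, for otherwise $Z(\h)=\h$ would contain all four modules, contradicting $Z(\h)=V_k$; hence $[\h,\h]=\h^2\neq0$. Next, the last nonzero term $\h^c$ satisfies $[\h,\h^c]=0$, so $\h^c\subseteq Z(\h)=V_k$, and as $V_k$ is simple this forces $\h^c=V_k$; consequently $V_k\subseteq\h^2$. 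Since $\h^2\subsetneq\h$ by nilpotency, the module $\h^2$ contains one, two, or three of the four summands (four being excluded). These three possibilities are precisely cases (1), (3), and (2): one module forces $\h^2=V_k$ (case (1)); after relabelling, three modules give $\h^2=V_m\oplus V_p\oplus V_k$ (case (2)) and two modules give $\h^2=V_p\oplus V_k$ (case (3)).

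Case (1) is immediate from the trichotomy. For case (2) the quotient $\h/\h^2=V_n$ is a single module, so $\h$ is generated modulo $\h^2$ by $V_n$. Reducing modulo successive terms of the series, $\h^2/\h^3$ is the image of $[V_n,V_n]$, a single module by the convention of \Cref{prop:Levi_bracket_symp}, which I name $V_m$; iterating, $\h^3/\h^4$ is the image of $[V_n,V_m]$ and $\h^4$ the image of $[V_n,V_p]$. This yields the descending string
\[
[V_n,V_n]=V_m,\quad [V_n,V_m]=V_p,\quad [V_n,V_p]=V_k .
\]
Every remaining bracket is pushed into a degree exceeding the nilpotency length and therefore vanishes, with the sole exception of $[V_m,V_m]\subseteq\h^4=V_k$; this leaves exactly the recorded freedom $[V_m,V_m]\in\{V_k,0\}$.

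Case (3) is the substantive one. Here $\h/\h^2=V_n\oplus V_m$ supplies two generators, and the arguments above force $\h^3=V_k$ and $\h^2/\h^3=V_p$, giving a grading with $V_n,V_m$ in degree $1$, $V_p$ in degree $2$, and $V_k$ in degree $3$. At the coarse level of which module each bracket targets, the only possibly nonzero products are $[V_n,V_n],[V_n,V_m],[V_m,V_m]\in\{V_p,V_k,0\}$ and $[V_n,V_p],[V_m,V_p]\in\{V_k,0\}$, since every other bracket either lands below $\h^4=0$ or involves the central $V_k$. I would then impose the coarse constraints: some degree-$2$ bracket must equal $V_p$ (generation of the $V_p$-layer); some $[V_\bullet,V_p]$ must equal $V_k$ (generation of $\h^3$ and non-centrality of $V_p$); and both $V_n,V_m$ must be non-central. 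Organizing by the value of $[V_n,V_m]$ and quotienting by the relabelling symmetry $V_n\leftrightarrow V_m$, which interchanges the pairs $([V_n,V_n],[V_n,V_p])$ and $([V_m,V_m],[V_m,V_p])$ while fixing $[V_n,V_m]$, produces exactly the four displayed support patterns.

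The main obstacle is that these coarse constraints alone do not cut the list down to four: one must impose the Jacobi identity not merely on which module a bracket hits, but as relations among the underlying $\g_L$-equivariant morphisms $\Gamma$ of \Cref{prop:Levi_bracket_symp}, evaluated on triples consisting of two generators and one element of $V_p$. By Schur's Lemma (\Cref{thm:Schur}) each relevant morphism is determined up to a scalar in the multiplicity-one situation, so Jacobi becomes a finite system of scalar relations; solving it is what eliminates the remaining hybrid patterns — for instance $[V_n,V_m]=[V_m,V_m]=V_p$ together with $[V_m,V_p]=0$, which survives all coarse tests yet appears in none of the four tables — and pins down the precise entries recorded. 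A subsidiary point, handled through the Clebsch--Gordan decomposition, is to confirm that each surviving pattern is genuinely realizable, i.e.\ that the required morphisms $V_a\otimes V_b\to V_c$ (and the skew-symmetric morphism on $\Lambda^2 V_a$ when the two factors coincide) actually exist for the relevant highest weights.
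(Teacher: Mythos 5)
Your overall route is the same as the paper's: use nilpotency of $\h$ and $Z(\h)=V_k$ to run the lower central series, note that the last nonzero term lies in $Z(\h)$ so $V_k\subseteq \h^2$, split into $\h^2$ containing one, two or three summands, and settle each case by module-level Jacobi computations. Your treatment of case (2) via generation along the filtration is correct and in fact cleaner than the paper's subcase analysis of $[V_n,V_m]\in\{V_p,V_k,0\}$. The genuine gap is in case (3), where you stop precisely at the decisive step and misdiagnose what it requires. You assert that support-level constraints cannot reduce the list to the four tables and that one must pass to scalar relations among the equivariant morphisms via Schur's Lemma, offering the pattern $[V_n,V_m]=[V_m,V_m]=V_p$, $[V_m,V_p]=0$ as one that ``survives all coarse tests.'' It does not. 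In that pattern $\h^3=[V_n,V_p]+[V_m,V_p]=[V_n,V_p]$ must equal $V_k$, and Jacobi applied to $a,b\in V_m$, $c\in V_n$ gives
\[
[c,[a,b]]=-[a,[b,c]]-[b,[c,a]]\in [V_m,[V_m,V_n]]\subseteq [V_m,V_p]=0 .
\]
Since the image of a nonzero bracket $V_m\otimes V_m\to V_p$ is a nonzero submodule of the simple module $V_p$, the elements $[a,b]$ span all of $V_p$, so $[V_n,V_p]=0$, a contradiction. This spanning observation --- the image of a nonzero equivariant $\Gamma$ is the entire simple target --- is exactly what legitimizes the paper's schematic Jacobi computations, and it is all that is needed: the same style of argument forces $[V_n,V_m]=V_p$ when $[V_n,V_p]=[V_m,V_p]=V_k$ (else nothing maps onto $V_p$), rules out $[V_m,V_m]=V_p$ whenever $[V_m,V_p]=0$, and, combined with non-centrality of $V_m$, pins down $[V_m,V_m]=V_k$ in the third table; together with your $V_n\leftrightarrow V_m$ relabelling this yields precisely the four displayed tables.

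Because you defer these eliminations to an unexecuted ``finite system of scalar relations,'' your proof is incomplete at its key step, and the proposed repair is shakier than you suggest: the composites occurring in Jacobi live in spaces such as $\Hom_{\g_L}(V_m\otimes V_m\otimes V_n,\,V_k)$, which need not be one-dimensional, so \Cref{thm:Schur} does not reduce the Jacobi identity to a single scalar per bracket, and you would still have to prove the relevant systems admit no nonzero solutions. Note also that your subsidiary realizability step is not required: the theorem only asserts necessary support patterns, and the paper later remarks that an $\h$ carrying one of these tables need not even be a Lie algebra. Replace the scalar plan by the spanning-Jacobi eliminations above and your argument coincides with the paper's proof.
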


\begin{proof}
  Suppose that $[\h,\h]\neq Z(\h)$.
  Since $\g$ is perfect, $\h$ is nilpotent and therefore
  \[
  \h \supsetneq [\h,\h]=\h^2 \supsetneq \h^3 \supsetneq \cdots.
  \] As a consequence, $[\h,\h]$ must be the sum of at most $3$ simple modules and, by hypothesis, $[\h,\h]$ can only contain $2$ or $3$ simple modules.
  
  \medskip
  {\bf (1)} 
  First suppose that $\h^2= V_m \oplus V_p \oplus V_k$ and $\h^3 = V_p \oplus V_k$ y $\h^4= V_k =Z(\h)$.

  \medskip
  From 
  \[
  V_m \subset \h^2, \qquad V_m \not\subset \h^3 = V_p \oplus V_k
  \]
  follows that  $[V_n, V_n] = V_m$.
  \medskip
  Using that
  \[
  [\h,\h^2]= [V_n\oplus V_m \oplus V_p, V_m \oplus V_p] = V_p \oplus V_k
  \] 
  together with 
  \[
  [\h,\h^3]= [V_n\oplus V_m \oplus V_p , V_p] = V_k = Z(\h)
  \] 
  we conclude that
  \[
  [V_p, \h^2 ] =0 \quad \text{and} \quad
  [V_n, V_p ] = V_k
  \]
  \medskip
  Hence the possible multiplicative structure of $\h$ is described in the following table.
  \[
  \begin{tabular}{| c | c | c | c | c |}
  \hline
  $[,]$ & $V_n$ & $V_m$ & $V_p$ & $V_k$ \\ \hline
  $V_n$ & $V_m$ & $V_p/V_k/0$ & $V_k$ & $0$ \\ \hline
  $V_m$ & $V_p/V_k/0$ & $V_p/V_k/0$ & $0$ & $0$ \\ \hline
  $V_p$ & $V_k$ & $0$ & $0$ & $0$ \\ \hline
  $V_k$ & $0$ & $0$ & $0$ & $0$ \\
  \hline
  \end{tabular} 
  \]

  \medskip
  {\bf (1.a)} Assume that $[V_n, V_m]=V_p$. Then
  \[
  0 = [V_m, [V_n,V_m]] + [V_n, [V_m,V_m]] + [V_m,[V_m,V_n]]
   = [V_n, [V_m,V_m]]
  \] and thus 
  $[V_m,V_m] \subseteq V_k$. In this case, the possible multiplication table is given by
  \[
  \begin{tabular}{| c | c | c | c | c |}
  \hline
  $[,]$ & $V_n$ & $V_m$ & $V_p$ & $V_k$ \\ \hline
  $V_n$ & $V_m$ & $V_p$ & $V_k$ & $0$ \\ \hline
  $V_m$ & $V_p$ & $V_k/0$ & $0$ & $0$ \\ \hline
  $V_p$ & $V_k$ & $0$ & $0$ & $0$ \\ \hline
  $V_k$ & $0$ & $0$ & $0$ & $0$ \\
  \hline
  \end{tabular} 
  \]
  
  \medskip
  {\bf (1.b)} By a similar argument to the previous case, if  $[V_n, V_m]=V_k$, then $[V_m,V_m] = V_k/0$. 
  $$
  \begin{tabular}{| c | c | c | c | c |}
  \hline
  $[,]$ & $V_n$ & $V_m$ & $V_p$ & $V_k$ \\ \hline
  $V_n$ & $V_m$ & $V_k$ & $V_k$ & $0$ \\ \hline
  $V_m$ & $V_k$ & $V_k/0$ & $0$ & $0$ \\ \hline
  $V_p$ & $V_k$ & $0$ & $0$ & $0$ \\ \hline
  $V_k$ & $0$ & $0$ & $0$ & $0$ \\
  \hline
  \end{tabular} 
  $$ However, this is impossible as we are assuming  $\h^2$ decomposes into $3$ simple modules.
  
  \medskip
  {\bf (1.c)} Now suppose that $[V_n, V_m]=0$. In this case the possible multiplication table is
  \[
  \begin{tabular}{| c | c | c | c | c |}
  \hline
  $[,]$ & $V_n$ & $V_m$ & $V_p$ & $V_k$ \\ \hline
  $V_n$ & $V_m$ & $0$ & $V_k$ & $0$ \\ \hline
  $V_m$ & $0$ & $V_p/V_k/0$ & $0$ & $0$ \\ \hline
  $V_p$ & $V_k$ & $0$ & $0$ & $0$ \\ \hline
  $V_k$ & $0$ & $0$ & $0$ & $0$ \\
  \hline
  \end{tabular} 
  \]
  \bigskip
  Since we are assuming that $\h^2 = V_m \oplus V_p \oplus V_k$, we have $[V_m,V_m]=V_p$. Therefore
  \[
    \begin{aligned}
  0 &= [V_n, [V_m,V_m]] + [V_m, [V_m, V_n]] + [V_m, [V_n,V_m]]\\
   &= [V_n, [V_m,V_m]] = V_k = Z(\h),
  \end{aligned}
  \] and, in consequence, this case is impossible.

  {\bf (2)} Now suppose that $\h^2=V_p\oplus V_k$. Since $\h$ is nilpotent and $V_p \neq Z(\h)$, we have $ [\h, V_p] \neq V_p,0$ and therefore
  \[
    \h^3 = [\h,\h^2] = [\h,V_p] = V_k. 
  \]
  Hence we may assume $[V_n,V_p] = V_k$ without loss of generality. 

  Since $V_p\subset [\h,\h]$, $V_p = [V,W]$ for some $V,W= V_n,V_m$. 
  \[
  0 = [V_p, [V,W]] + [V, [W, V_p]] + [W, [V_p,V]]  = [V_p, V_p].
  \]
  In summary, the possible multiplication table of $\h$ is of the form,
  \[
  \begin{tabular}{| c | c | c | c | c |}
  \hline
  $[,]$ & $V_n$ & $V_m$ & $V_p$ & $V_k$ \\ \hline
  $V_n$ & $V_p/V_k/0$ & $V_p/V_k/0$ & $V_k$ & $0$ \\ \hline
  $V_m$ & $V_p/V_k/0$ & $V_p/V_k/0$ & $V_k/0$ & $0$ \\ \hline
  $V_p$ & $V_k$ & $V_k/0$ & $0$ & $0$ \\ \hline
  $V_k$ & $0$ & $0$ & $0$ & $0$ \\
  \hline
  \end{tabular} 
  \] 

  {\bf (2.a)} Suppose that $[V_m, V_p] = V_k$. 
  In this case we have that $[V_n,V_m] = V_p$. Otherwise, $[V_n,V_m] = V_k/0$ implies
  \[
    0  = [[V_n,V_m],V_n] +  [[V_m,V_n],V_n] + [[V_n,V_n],V_m]
      = [[V_n,V_n],V_m], 
  \] and 
  \[
    0  = [[V_m,V_n],V_m] +  [[V_n,V_m],V_m] + [[V_m,V_m],V_n]
      = [[V_m,V_m],V_n].  
  \] Hence $[V_n\oplus V_m,V_n\oplus V_m]\subset V_k$, contradicting the fact $V_p\subset [V_n\oplus V_m,V_n\oplus V_m]$. 

  In this case, the multiplication table is
  \[
  \begin{tabular}{| c | c | c | c | c |}
  \hline
  $[,]$ & $V_n$ & $V_m$ & $V_p$ & $V_k$ \\ \hline
  $V_n$ & $V_p/V_k/0$ & $V_p$ & $V_k$ & $0$ \\ \hline
  $V_m$ & $V_p$ & $V_p/V_k/0$ & $V_k$ & $0$ \\ \hline
  $V_p$ & $V_k$ & $V_k$ & $0$ & $0$ \\ \hline
  $V_k$ & $0$ & $0$ & $0$ & $0$ \\
  \hline
  \end{tabular} 
  \] 

  {\bf (2.b)} Now suppose that $[V_m, V_p] = 0$. 
  Then
  \[
  \begin{aligned}
    0 & = [[V_m,V_n],V_m] +  [[V_n,V_m],V_m] + [[V_m,V_m],V_n]\\
     & = [[V_m,V_m],V_n].
  \end{aligned}  
  \] This implies that $[V_m,V_m] = V_k/0$. Since $V_p \subset [\h,\h]$, either $V_p = [V_n,V_m]$ or $V_p = [V_n,V_n]$. In the first case, the following possible multiplication table is given by
  \[
  \begin{tabular}{| c | c | c | c | c |}
  \hline
  $[,]$ & $V_n$ & $V_m$ & $V_p$ & $V_k$ \\ \hline
  $V_n$ & $V_p/V_k/0$ & $V_p$ & $V_k$ & $0$ \\ \hline
  $V_m$ & $V_p$ & $V_k/0$ & $0$ & $0$ \\ \hline
  $V_p$ & $V_k$ & $0$ & $0$ & $0$ \\ \hline
  $V_k$ & $0$ & $0$ & $0$ & $0$ \\
  \hline
  \end{tabular} 
  \] 
  If $V_p \neq [V_m,V_n]$, then using the fact that $V_m\neq Z(\h)$, we obtain two possible multiplicative structures 
  \[
  \begin{tabular}{| c | c | c | c | c |}
  \hline
  $[,]$ & $V_n$ & $V_m$ & $V_p$ & $V_k$ \\ \hline
  $V_n$ & $V_p$ & $0$ & $V_k$ & $0$ \\ \hline
  $V_m$ & $0$ & $V_k$ & $0$ & $0$ \\ \hline
  $V_p$ & $V_k$ & $0$ & $0$ & $0$ \\ \hline
  $V_k$ & $0$ & $0$ & $0$ & $0$ \\
  \hline
  \end{tabular} 
  \] 
  \[
  \begin{tabular}{| c | c | c | c | c |}
  \hline
  $[,]$ & $V_n$ & $V_m$ & $V_p$ & $V_k$ \\ \hline
  $V_n$ & $V_p$ & $V_k$ & $V_k$ & $0$ \\ \hline
  $V_m$ & $V_k$ & $V_k/0$ & $0$ & $0$ \\ \hline
  $V_p$ & $V_k$ & $0$ & $0$ & $0$ \\ \hline
  $V_k$ & $0$ & $0$ & $0$ & $0$ \\
  \hline
  \end{tabular} 
  \]   
\end{proof}

\begin{Cor}\label{cor:perfecta_mult_der_ext}
  Let $\g= \g_L \ltimes \h$  be a perfect Lie with simple Levi subalgebra $\g_L$ and such that $\h=V_n \oplus V_m \oplus V_p \oplus V_k$, $Z(\h) =V_k$. If $[\h,\h]= V_m \oplus V_p \oplus V_k$, then $\g$ admits semisimple outer derivations.
  \end{Cor}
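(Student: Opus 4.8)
The plan is to exhibit an explicit semisimple derivation $D$ of $\g$ that fails to be inner. By \Cref{thm:posibles_mult}, the hypotheses $Z(\h)=V_k$ and $[\h,\h]=V_m\oplus V_p\oplus V_k$ force the multiplicative structure of $\h$ to be the one recorded in case (2) of that theorem, namely $[V_n,V_n]=V_m$, $[V_n,V_m]=V_p$, $[V_n,V_p]=V_k$, with $[V_m,V_m]$ equal to either $V_k$ or $0$ and all remaining brackets among the summands vanishing. I would then search for a derivation of the diagonal form $D\vert_{\g_L}\equiv 0$, $D\vert_{V_n}=a\,\Id$, $D\vert_{V_m}=b\,\Id$, $D\vert_{V_p}=c\,\Id$, $D\vert_{V_k}=d\,\Id$. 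Such a $D$ is automatically semisimple, and since each summand is $\g_L$-invariant and $D$ vanishes on $\g_L$, the Leibniz rule holds on every bracket involving $\g_L$.

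The key step is to convert the Leibniz rule on the brackets between the four simple modules into a homogeneous linear system in $a,b,c,d$. Applying $D$ to $[V_n,V_n]=V_m$, $[V_n,V_m]=V_p$ and $[V_n,V_p]=V_k$ yields the relations $b=2a$, $c=a+b$ and $d=a+c$; in the subcase $[V_m,V_m]=V_k$ one also obtains $d=2b$. I would then verify that this system is consistent: back-substitution gives $b=2a$, $c=3a$, $d=4a$, and the extra relation $d=2b$ reduces to the identity $4a=4a$, so it imposes no new constraint. Consequently the one-parameter family $(a,b,c,d)=(1,2,3,4)\,t$ furnishes nonzero solutions in both subcases $[V_m,V_m]=V_k$ and $[V_m,V_m]=0$, and each such choice defines a genuine derivation $D$ with $D\vert_{\h}\neq 0$.

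To conclude, I would appeal to \Cref{prop:der_semisimples}. Since $D\vert_{\g_L}\equiv 0$, $D$ acts as a scalar on every simple summand of $\h$, and $D\vert_{\h}\neq 0$ by construction, that proposition shows $D$ is not inner. As $D$ is semisimple, this produces the required semisimple outer derivation.

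The step I expect to be the only real obstacle is checking the compatibility of the over-determined system in the subcase $[V_m,V_m]=V_k$: a priori the four Leibniz relations could conflict, forcing $a=b=c=d=0$ and hence, by \Cref{prop:der_semisimples}, only inner derivations of this diagonal type. The point is that the eigenvalues of $D$ add along successive brackets, so the relation $d=2b$ coming from $[V_m,V_m]=V_k$ is already a formal consequence of $b=2a$ and $d=4a$; this additivity of weights along the lower central series of $\h$ is precisely what guarantees a nonzero solution.
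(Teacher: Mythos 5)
Your proposal is correct and takes essentially the same route as the paper: the paper's proof simply exhibits the derivation with $D\vert_{\g_L}\equiv 0$ and eigenvalues $\frac{\lambda}{4},\frac{\lambda}{2},\frac{3\lambda}{4},\lambda$ on $V_n,V_m,V_p,V_k$ --- i.e.\ your $(a,b,c,d)=(1,2,3,4)t$ --- and invokes \Cref{prop:der_semisimples} to conclude it is outer. Your derivation of these weights from the Leibniz system, including the consistency check $d=2b$ in the subcase $[V_m,V_m]=V_k$, merely makes explicit what the paper leaves to the reader.
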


  \begin{proof}
    By \Cref{prop:der_semisimples}, any derivation of the form 
    \[\begin{aligned}
      & D\vert_{\g_L}\equiv 0,\quad D\vert_{V_n} = \frac{\lambda}{4}\Id_{V_n}, \quad D\vert_{V_m} = \frac{\lambda}{2}\Id_{V_m}, \\
      &  D\vert_{V_p} = \frac{3\lambda}{4}\Id_{V_p},\quad D\vert_{V_k} = \lambda\Id_{V_k},
    \end{aligned}
      \] with $\lambda\neq 0 $ is an outer derivation.
  \end{proof}  

\begin{Cor}\label{cor:perfecta_mult_der_ext_2}
    Let $\g= \g_L \ltimes \h$  be a perfect Lie 
    such that $\h=V_n \oplus V_m \oplus V_p \oplus V_k$, $Z(\h) =V_k$ and $[\h,\h]= V_p \oplus V_k$. 
    Then $\g$ has semisimple outer
derivations, if the multiplicative structure of $\h$ is isomorphic to one of the following.
    \begin{enumerate}
      \item \[
        \begin{tabular}{| c | c | c | c | c |}
        \hline
        $[,]$ & $V_n$ & $V_m$ & $V_p$ & $V_k$ \\ \hline
        $V_n$ & $V_p/0$ & $V_p$ & $V_k$ & $0$ \\ \hline
        $V_m$ & $V_p$ & $V_p/0$ & $V_k$ & $0$ \\ \hline
        $V_p$ & $V_k$ & $V_k$ & $0$ & $0$ \\ \hline
        $V_k$ & $0$ & $0$ & $0$ & $0$ \\
        \hline
        \end{tabular} 
      \] 
      
      \item \[
      \begin{tabular}{| c | c | c | c | c |}
      \hline
      $[,]$ & $V_n$ & $V_m$ & $V_p$ & $V_k$ \\ \hline
      $V_n$ & $V_p/V_k/0$ & $V_p$ & $V_k$ & $0$ \\ \hline
      $V_m$ & $V_p$ & $0$ & $0$ & $0$ \\ \hline
      $V_p$ & $V_k$ & $0$ & $0$ & $0$ \\ \hline
      $V_k$ & $0$ & $0$ & $0$ & $0$ \\
      \hline
      \end{tabular} 
      \] 
      \item \[
      \begin{tabular}{| c | c | c | c | c |}
      \hline
      $[,]$ & $V_n$ & $V_m$ & $V_p$ & $V_k$ \\ \hline
      $V_n$ & $V_p$ & $0$ & $V_k$ & $0$ \\ \hline
      $V_m$ & $0$ & $V_k$ & $0$ & $0$ \\ \hline
      $V_p$ & $V_k$ & $0$ & $0$ & $0$ \\ \hline
      $V_k$ & $0$ & $0$ & $0$ & $0$ \\
      \hline
      \end{tabular} 
      \] 
      \item \[
      \begin{tabular}{| c | c | c | c | c |}
      \hline
      $[,]$ & $V_n$ & $V_m$ & $V_p$ & $V_k$ \\ \hline
      $V_n$ & $V_p$ & $V_k$ & $V_k$ & $0$ \\ \hline
      $V_m$ & $V_k$ & $0$ & $0$ & $0$ \\ \hline
      $V_p$ & $V_k$ & $0$ & $0$ & $0$ \\ \hline
      $V_k$ & $0$ & $0$ & $0$ & $0$ \\
      \hline
      \end{tabular} 
      \] 
    \end{enumerate}
\end{Cor}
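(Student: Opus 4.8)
The plan is to proceed exactly as in the proof of \Cref{cor:perfecta_mult_der_ext}: for each of the four multiplication tables I would look for a derivation $D$ of the special shape covered by \Cref{prop:der_semisimples}, namely $D\vert_{\g_L}\equiv 0$ and $D\vert_{V_i}=\lambda_i\,\Id_{V_i}$ for scalars $\lambda_n,\lambda_m,\lambda_p,\lambda_k$. By \Cref{prop:der_semisimples}, any such $D$ is automatically an \emph{outer} derivation the moment $D\vert_{\h}\neq 0$, i.e.\ the moment one of the four scalars is nonzero. Hence it is enough, for each table, to produce a nonzero solution $(\lambda_n,\lambda_m,\lambda_p,\lambda_k)$ for which $D$ really is a derivation.

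First I would reduce the derivation condition to a homogeneous linear system. Since each module is scaled and, by our Convention, every nonzero product lands in a single simple module, the Leibniz rule on a pair $(x,y)\in V_a\times V_b$ with $0\neq[x,y]\in V_c$ is equivalent to the single scalar equation $\lambda_a+\lambda_b=\lambda_c$, while the Leibniz rule involving $\g_L$ holds automatically because $[\g_L,V_i]\subseteq V_i$ by \Cref{prop:Levi_bracket_symp} and $D\vert_{\g_L}=0$. Thus, for a fixed table, $D$ is a derivation precisely when the scalars satisfy $\lambda_a+\lambda_b=\lambda_c$ for every nonzero entry $[V_a,V_b]=V_c$.

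Then I would solve these systems table by table and exhibit explicit nonzero solutions. For table (1), the definite relations $\lambda_n+\lambda_m=\lambda_p$, $\lambda_n+\lambda_p=\lambda_k$ and $\lambda_m+\lambda_p=\lambda_k$ force $\lambda_n=\lambda_m$, and $(\lambda_n,\lambda_m,\lambda_p,\lambda_k)=(1,1,2,3)$ satisfies the system for \emph{every} resolution of the ambiguous diagonal entries $[V_n,V_n],[V_m,V_m]\in\{V_p,0\}$. For table (2) the definite relations $\lambda_n+\lambda_m=\lambda_p$ and $\lambda_n+\lambda_p=\lambda_k$ leave one extra relation coming from the choice of $[V_n,V_n]$: when $[V_n,V_n]=V_p$ the tuple $(1,1,2,3)$ works, and when $[V_n,V_n]=V_k$ or $0$ the tuple $(1,0,1,2)$ works. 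For table (3), the relations $2\lambda_n=\lambda_p$, $\lambda_n+\lambda_p=\lambda_k$ and $2\lambda_m=\lambda_k$ give $(2,3,4,6)$; and for table (4), the relations $2\lambda_n=\lambda_p$, $\lambda_n+\lambda_m=\lambda_k$ and $\lambda_n+\lambda_p=\lambda_k$ give $(1,2,2,3)$. In each case $D\vert_{\h}\neq 0$, so \Cref{prop:der_semisimples} yields a semisimple outer derivation.

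The only genuine subtlety, and the step I expect to require the most care, is \emph{consistency}: because tables (1) and (2) carry ambiguous entries of the form $V_x/0$ (resp.\ $V_p/V_k/0$), I must verify that adding the extra relation imposed by each resolution does not collapse the solution space to $\{0\}$. This is not a forced conclusion in general, so one has to check each branch; here, however, every resulting homogeneous system has at most three independent equations in four unknowns and therefore remains underdetermined, and the explicit tuples above are compatible with each admissible resolution. Consequently a nonzero semisimple derivation exists in all of the listed cases, which is exactly the claim.
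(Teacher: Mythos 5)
Your proposal is correct and takes essentially the same route as the paper: the paper's proof likewise exhibits diagonal derivations vanishing on $\g_L$ and invokes \Cref{prop:der_semisimples} to conclude they are outer, and its explicit choices --- $(\tfrac12,\tfrac12,1,\tfrac32)$ for table (1) and for table (2) when $[V_n,V_n]=V_p$, $(\tfrac12,0,\tfrac12,1)$ for the other resolutions of table (2), $(\tfrac12,\tfrac34,1,\tfrac32)$ for table (3), and $(\tfrac12,1,1,\tfrac32)$ for table (4) --- are exactly your tuples $(1,1,2,3)$, $(1,0,1,2)$, $(2,3,4,6)$, $(1,2,2,3)$ up to rescaling. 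Your branch-by-branch consistency check for the ambiguous entries is the same case split the paper makes (explicitly only for table (2), where it is actually needed).
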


\begin{proof} 
To show the existence of outer derivations, we now define  derivations satisfying \Cref{prop:der_semisimples} in each of the above cases. As in \Cref{prop:der_semisimples}, we will take $ D\vert_{\g_L}\equiv 0$ in each of the cases.
\medskip

For the first case consider
    \[ D\vert_{V_n\oplus V_m} = \frac{\lambda}{2}\Id_{V_n\oplus V_m},
       D\vert_{V_p} = \lambda\Id_{V_p},\quad D\vert_{V_k} = \frac{3\lambda}{2}\Id_{V_k}
      \] with $\lambda\neq 0 $.
In the second case, define $D$ as before if $[V_n,V_n] = V_p$. Otherwise, set
\[ 
  D\vert_{V_m}\equiv 0,\quad D\vert_{V_n\oplus V_p} = \frac{\lambda}{2}\Id_{V_n\oplus V_p},\quad  D\vert_{V_k} = \lambda\Id_{V_k}
\] 
 with $\lambda\neq 0 $.
For the third case define
\[ 
  D\vert_{V_n} = \frac{\lambda}{2}\Id_{V_n},\quad D\vert_{V_m} = \frac{3\lambda}{4}\Id_{V_m},\quad  D\vert_{ V_p} = \lambda\Id_{V_p} \quad  D\vert_{V_k} = \frac{3\lambda}{2}\Id_{V_k}
\] 
 with $\lambda\neq 0 $.
Finally, set
\[ 
  D\vert_{V_n} = \frac{\lambda}{2}\Id_{V_n}, \quad  D\vert_{V_m\oplus V_p} = \lambda\Id_{V_m\oplus V_p} \quad  D\vert_{V_k} = \frac{3\lambda}{2}\Id_{V_k}
\] 
 with $\lambda\neq 0 $ for the fourth case.
\end{proof}

\begin{Prop}\label{prop:symp_mult_tables}
  Suppose that $\g=\g_L \ltimes \h$ is a Lie algebra with simple Levi subalgebra $\g_L$ such that $\h=V_n \oplus V_m \oplus V_p \oplus V_k$, as a $\g_L$-module. If $\g$ is sympathetic, then there exist two $\g_L$-modules, say $V_p$ and $V_k$, such that $[\h,\h] = V_p\oplus V_k$ and $Z(\h) = V_k$. Moreover, up to isomorphism, the multiplicative structure of $\h$ must be given by one of the $6$ following tables. 
\begin{enumerate}
    \item 
    \[
      \begin{tabular}{| c | c | c | c | c |}
      \hline
      $[,]$ & $V_n$ & $V_m$ & $V_p$ & $V_k$ \\ \hline
      $V_n$ & $V_p$ & $V_p$ & $V_k$ & $0$ \\ \hline
      $V_m$ & $V_p$ & $V_k$ & $V_k$ & $0$ \\ \hline
      $V_p$ & $V_k$ & $V_k$ & $0$ & $0$ \\ \hline
      $V_k$ & $0$ & $0$ & $0$ & $0$ \\
      \hline
      \end{tabular} 
    \] 
    \item
    \[
    \begin{tabular}{| c | c | c | c | c |}
    \hline
    $[,]$ & $V_n$ & $V_m$ & $V_p$ & $V_k$ \\ \hline
    $V_n$ & $V_k$ & $V_p$ & $V_k$ & $0$ \\ \hline
    $V_m$ & $V_p$ & $0$ & $V_k$ & $0$ \\ \hline
    $V_p$ & $V_k$ & $V_k$ & $0$ & $0$ \\ \hline
    $V_k$ & $0$ & $0$ & $0$ & $0$ \\
    \hline
    \end{tabular} 
    \] 
    \item
    \[
    \begin{tabular}{| c | c | c | c | c |}
    \hline
    $[,]$ & $V_n$ & $V_m$ & $V_p$ & $V_k$ \\ \hline
    $V_n$ & $V_k$ & $V_p$ & $V_k$ & $0$ \\ \hline
    $V_m$ & $V_p$ & $V_k$ & $V_k$ & $0$ \\ \hline
    $V_p$ & $V_k$ & $V_k$ & $0$ & $0$ \\ \hline
    $V_k$ & $0$ & $0$ & $0$ & $0$ \\
    \hline
    \end{tabular} 
    \] 
    \item
    \[
    \begin{tabular}{| c | c | c | c | c |}
    \hline
    $[,]$ & $V_n$ & $V_m$ & $V_p$ & $V_k$ \\ \hline
    $V_n$ & $V_p$ & $V_p$ & $V_k$ & $0$ \\ \hline
    $V_m$ & $V_p$ & $V_k$ & $0$ & $0$ \\ \hline
    $V_p$ & $V_k$ & $0$ & $0$ & $0$ \\ \hline
    $V_k$ & $0$ & $0$ & $0$ & $0$ \\
    \hline
    \end{tabular} 
    \] 
    \item
    \[
    \begin{tabular}{| c | c | c | c | c |}
    \hline
    $[,]$ & $V_n$ & $V_m$ & $V_p$ & $V_k$ \\ \hline
    $V_n$ & $V_k$ & $V_p$ & $V_k$ & $0$ \\ \hline
    $V_m$ & $V_p$ & $V_k$ & $0$ & $0$ \\ \hline
    $V_p$ & $V_k$ & $0$ & $0$ & $0$ \\ \hline
    $V_k$ & $0$ & $0$ & $0$ & $0$ \\
    \hline
    \end{tabular} 
    \] 
    \item
    \[
    \begin{tabular}{| c | c | c | c | c |}
    \hline
    $[,]$ & $V_n$ & $V_m$ & $V_p$ & $V_k$ \\ \hline
    $V_n$ & $V_p$ & $V_k$ & $V_k$ & $0$ \\ \hline
    $V_m$ & $V_k$ & $V_k$ & $0$ & $0$ \\ \hline
    $V_p$ & $V_k$ & $0$ & $0$ & $0$ \\ \hline
    $V_k$ & $0$ & $0$ & $0$ & $0$ \\
    \hline
    \end{tabular} 
    \] 
    
  \end{enumerate}
\end{Prop}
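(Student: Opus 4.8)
The plan is to take the four template multiplication tables furnished by \Cref{thm:posibles_mult} and sieve them through the outer-derivation criterion, keeping track of isomorphisms, until exactly six structures survive. First I would pin down the coarse shape of $\h$. Since $\g$ is sympathetic, \Cref{thm:symp_at_least_4_repr} gives $s(\g)\geq 4$, so under the standing hypothesis $\h=V_n\oplus V_m\oplus V_p\oplus V_k$ the centre is a single simple module (as in the Convention); write $Z(\h)=V_k$. As $\g$ is perfect and $\h$ is nilpotent, $[\h,\h]\subsetneq\h$ is a sum of at most three simple modules. \Cref{prop:derived_ideal_in_Z} excludes $[\h,\h]\subseteq Z(\h)$, and \Cref{cor:perfecta_mult_der_ext} excludes the three-module case $[\h,\h]=V_m\oplus V_p\oplus V_k$, since it always carries a semisimple outer derivation. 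Hence $[\h,\h]=V_p\oplus V_k$, which is the first assertion of the Proposition.

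Next I would enumerate and prune. With $[\h,\h]=V_p\oplus V_k$ and $Z(\h)=V_k$, \Cref{thm:posibles_mult} confines the bracket to the four tables of its case (3), each still carrying ambiguous entries such as $V_p/V_k/0$ and $V_k/0$. I would expand every such entry into a fully specified table and apply the following test for semisimple outer derivations. Setting $D\vert_{\g_L}\equiv 0$ and $D\vert_{V_i}=\lambda_i\Id_{V_i}$, the requirement that $D$ be a derivation reduces to the single linear condition $\lambda_i+\lambda_j=\lambda_l$ for each nonzero product $[V_i,V_j]=V_l$, a homogeneous system in $(\lambda_n,\lambda_m,\lambda_p,\lambda_k)$. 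By \Cref{prop:der_semisimples}, any nonzero solution produces an outer derivation, so a table can be sympathetic only if this system has the trivial solution alone. \Cref{cor:perfecta_mult_der_ext_2} eliminates four of the resulting families at once, and the same linear computation removes the remaining stray resolutions; for instance the table with $[V_n,V_n]=0$, $[V_m,V_m]=V_k$, $[V_n,V_m]=V_p$, $[V_n,V_p]=V_k$ forces $(\lambda_n,\lambda_m,\lambda_p,\lambda_k)\propto(1,2,3,4)$ and is discarded.

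Finally I would collapse the list by isomorphism. Many surviving resolutions coincide after an admissible change of basis of $\h$ as a $\g_L$-module, most conspicuously the involution $V_n\leftrightarrow V_m$ when a table is symmetric in those two summands; grouping the outer-derivation-free tables into such classes should leave precisely the six tables in the statement. I expect the main obstacle to be exactly this bookkeeping: there is no single invariant that separates the survivors, so reaching the count of six requires a disciplined case analysis that simultaneously tracks which resolutions satisfy the Jacobi identity at the module level, which admit only the trivial solution of the derivation system above, and which are identified by a relabelling or rescaling of the equivariant morphisms. Checking that the six remaining classes are genuinely distinct, and that every discarded resolution really does carry a nonzero solution, is where the argument demands the most care.
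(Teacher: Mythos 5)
Your proposal follows essentially the same route as the paper: it takes the template tables of \Cref{thm:posibles_mult}, forces $[\h,\h]=V_p\oplus V_k$ via \Cref{prop:derived_ideal_in_Z} and \Cref{cor:perfecta_mult_der_ext}, and then sieves the resolved case-(3) tables through the semisimple outer-derivation criterion of \Cref{prop:der_semisimples} and \Cref{cor:perfecta_mult_der_ext_2}, with your linear system $\lambda_i+\lambda_j=\lambda_l$ being a correct unpacking of that criterion (your sample elimination $(\lambda_n,\lambda_m,\lambda_p,\lambda_k)\propto(1,2,3,4)$ checks out, and the six surviving tables do admit only the trivial solution). The one step you defer rather than prove is the simplicity of $Z(\h)$, which you attribute to the Convention even though the Convention only announces it for $s(\g)\leq 4$; the paper's proof actually establishes it here, ruling out $Z(\h)=\h$ and $Z(\h)=V_m\oplus V_p\oplus V_k$ by \Cref{prop:derived_ideal_in_Z} and $Z(\h)=V_p\oplus V_k$ by the argument of \Cref{thm:3_repr} --- a one-line repair entirely within the toolkit you already use.
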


\begin{proof}
First notice that if $Z(\h) = \h$ or if $Z(\h) = V_m \oplus V_p \oplus V_k$, then $[\h,\h]\subset Z(\h)$ and by \Cref{prop:derived_ideal_in_Z} $\g$ is not sympathetic. If $Z(\h) = V_p \oplus V_k$, then  similar arguments to those in \Cref{thm:3_repr} show that $\g$ is not sympathetic.

\medskip
Since $\g$ is sympathetic, then $\g$ is perfect and therefore the Lie algebra structure of $\h$ is one of the multiplications given in \Cref{thm:posibles_mult}.
\Cref{prop:derived_ideal_in_Z} implies that, either $[\h,\h] = V_p\oplus V_k$, or $[\h,\h] = V_m\oplus V_p\oplus V_k$. Given that $\g$ does not contain any outer derivations, \Cref{cor:perfecta_mult_der_ext} implies that $[\h,\h] = V_p\oplus V_k$. For the same reason, the possible multiplicative structures of $\h$ in \Cref{thm:posibles_mult} (Item 3) must  exclude those listed in \Cref{cor:perfecta_mult_der_ext_2}.
\end{proof}

\subsection{Sympathetic Lie algebras with simple Levi subalgebra $\g_L \neq \s_2(\C)$}\label{subsec:symp_alg_not_sl2}

\begin{Theorem}\label{thm:smallest_symp_sl_2}
Suppose that $\g = \g_L\ltimes\h$ is a sympathetic Lie algebra with simple Levi subalgebra $\g_L$, $\g_L\neq \s_2(\C)$ and $\h$ does not contain any trivial $\g_L$-modules. Then $\dim(\g)> 25$.
\end{Theorem}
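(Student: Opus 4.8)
The plan is to reduce to a single Levi type by a crude dimension count and then to settle that type with a grading argument. Since $\g$ is sympathetic, $s(\g)\geq 4$ by \Cref{thm:symp_at_least_4_repr}, and since $\h$ has no trivial $\g_L$-module, \Cref{prop:Levi_bracket_symp} shows every simple summand of $\h$ has dimension at least $2$; writing $d(\g_L)$ for the least dimension of a non-trivial simple $\g_L$-module, we get $\dim\g\geq \dim\g_L+4\,d(\g_L)$. If $\dim\g_L\geq 18$ this already gives $\dim\g\geq 26$. The only simple Lie algebras of dimension below $18$ other than $\s_2(\C)$ are $\s_3(\C)$, $\so_5(\C)\cong\sp_4(\C)$, $G_2$ and $\s_4(\C)\cong\so_6(\C)$, of dimensions $8,10,14,15$ and with $d=3,4,7,4$ respectively. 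The last three yield $\dim\g\geq 26,42,31$, so the entire difficulty concentrates in the case $\g_L=\s_3(\C)$, where the crude estimate only gives $\dim\g\geq 20$.

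For $\g_L=\s_3(\C)$ I would exploit the triality grading. The centre $\mathbb{Z}/3\mathbb{Z}$ of $SL_3(\C)$ acts by a scalar on each simple module $V_{(a,b)}$, giving a charge $c(V_{(a,b)})\equiv a-b\pmod 3$ that is additive under tensor products; the non-trivial charge-$0$ modules have dimension $\geq 8$ (the adjoint), while the charge-nonzero modules of least dimension are the two $3$-dimensional modules $V_{(1,0)}$ and $V_{(0,1)}$. Since a nonzero bracket $[V_i,V_j]=V_l$ forces $V_l\subset V_i\otimes V_j$ by \Cref{prop:Levi_bracket_symp}, it must satisfy $c_l\equiv c_i+c_j\pmod 3$. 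Suppose first that $\h$ has no charge-$0$ summand, so $\h=\h_1\oplus\h_2$ with $\h_c$ the sum of its charge-$c$ summands; additivity then gives $[\h_1,\h_2]=0$, $[\h_1,\h_1]\subset\h_2$ and $[\h_2,\h_2]\subset\h_1$. Applying the Jacobi identity to $x,y\in\h_1$ and $z\in\h_2$ yields $[\h_2,[\h_1,\h_1]]=0$, and symmetrically $[\h_1,[\h_2,\h_2]]=0$; combined with $[\h_1,[\h_1,\h_1]]\subset[\h_1,\h_2]=0$ and its mirror, this forces $[\h,\h]\subset Z(\h)$, whence $\g$ is not sympathetic by \Cref{prop:derived_ideal_in_Z}. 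Therefore a sympathetic $\g$ must have a charge-$0$ summand of dimension $\geq 8$; if $s(\g)\geq 5$ the remaining summands contribute at least $12$, so $\dim\h\geq 20$ and $\dim\g\geq 28$.

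It remains to treat $s(\g)=4$, where \Cref{prop:symp_mult_tables} lists the six possible multiplication tables of $\h$. For each table I would read off the charge equations $c_l\equiv c_i+c_j$ from its nonzero entries; a short computation shows that in every one of the six cases these equations force $c_n\equiv c_m\equiv c_p\equiv c_k\equiv 0\pmod 3$. Since each summand is non-trivial, all four then have dimension $\geq 8$, giving $\dim\h\geq 32$ and $\dim\g\geq 40$.

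In every case $\dim\g>25$, as required. The main obstacle is precisely the $\s_3(\C)$ case: the crude count falls one short of $26$, and ruling out the borderline configuration of one $8$-dimensional adjoint together with three $3$-dimensional modules requires the triality $\mathbb{Z}/3\mathbb{Z}$ grading. This grading is special to $\s_3(\C)$ and is absent for $\s_2(\C)$, whose modules carry no such charge, which is consistent with the minimal sympathetic algebra being found with Levi factor $\s_2(\C)$.
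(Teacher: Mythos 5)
Your proposal is correct, and it takes a genuinely different route from the paper's in the one hard case. Both arguments begin the same way: $s(\g)\geq 4$ from \Cref{thm:symp_at_least_4_repr}, a crude count $\dim\g\geq\dim\g_L+4\,d(\g_L)$ eliminating everything except $\g_L=\s_3(\C)$ (your count is in fact more careful than the paper's: the paper's split ``$\g_L\neq\s_3(\C),\s_4(\C),\so(5)$ implies $\dim\g_L\geq 21$'' overlooks $G_2$ of dimension $14$, which your $d(G_2)=7$ handles, and you use the correct $\dim\so(5)=10$ with $d=4$, giving $26$, where the paper asserts dimension $15$ for both remaining algebras). For $\s_3(\C)$ the paper argues via explicit Clebsch--Gordan decompositions: it shows $\h=V_{(1,0)}^r\oplus V_{(0,1)}^s$ forces $[\h,\h]\subset Z(\h)$, deduces that some summand must be larger, and then rules out the borderline case by a placement analysis of $V_{(1,1)}$ inside the six tables of \Cref{prop:symp_mult_tables}, with ``the other cases are similar'' covering $V_{(2,0)},V_{(0,2)}$. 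You replace all of this by the $\Z/3\Z$ central character $c(V_{(a,b)})\equiv a-b\pmod 3$: your no-charge-$0$ argument ($[\h_1,\h_2]=0$, $[\h_i,\h_i]\subset Z(\h)$, then \Cref{prop:derived_ideal_in_Z}) strictly generalizes the paper's $V_{(1,0)}^r\oplus V_{(0,1)}^s$ computation and absorbs the $V_{(2,0)},V_{(0,2)}$ cases in one stroke; and your claim that each of the six tables forces $c_n\equiv c_m\equiv c_p\equiv c_k\equiv 0\pmod 3$ checks out (e.g.\ in table 1, $2c_n\equiv c_p$ and $c_n+c_p\equiv c_k$ give $c_k\equiv 3c_n\equiv 0$, whence $2c_m\equiv c_k$ gives $c_m\equiv 0$, and similarly in the other five tables), which kills the borderline adjoint-plus-three-$3$'s configuration of dimension exactly $25$ and yields the much stronger bound $\dim\g\geq 8+4\cdot 8=40$ when $s(\g)=4$. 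What each approach buys: the paper's is elementary and explicit but requires case-by-case tensor decompositions and leaves some cases to ``similar'' arguments; yours is uniform, structural, and sharper, at the cost of being special to $\s_3(\C)$ (as you correctly note), and it inherits the same dependence on the paper's standing Convention through \Cref{prop:symp_mult_tables}, exactly as the paper's own proof does.
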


\begin{proof}
  In this proof we use several standard results from representation theory, which can be found for example in \cite{Ha}.
  A simple calculation shows that, if $\g_L \neq \s_3(\C),\s_4(\C), \so(5)$, then $\dim(\g) \geq 21$. 
  By \Cref{thm:symp_at_least_4_repr}, the decomposition of $\h$ into simple $\g_L$-modules has at least $4$ modules. 
  By \Cref{prop:Levi_bracket_symp},
  since $\h$ does not contain any trivial modules, we obtain a lower bound $\dim(\h) \geq 8$. Hence, $\dim(\g) \geq 21+8 = 29$, as required.

  \medskip
  Next we prove the statement for the $3$ remaining cases. First suppose that $\g_L = \s_4(\C), \so(5)$. In this case, $\dim(\g_L) = 15$ and any non trivial simple $\g_L$-module has dimension at least $4$. Using \Cref{thm:symp_at_least_4_repr} we get a bound on the dimension of $\g$,
  \[
  \dim(\g) \geq 15 + 4\cdot 4 = 31.  
  \] 

  Finally, suppose that $\g_L = \s_3(\C)$. Let $V_{(a_1,a_2)}$ denote the simple $\s_3(\C)$-module of highest weight $(a_1,a_2)$. It is a standard result that
  \[
\dim(V_{(a_1,a_2)}) = \frac{1}{2}(a_1+1)(a_2+1)(a_1+a_2+2).
  \] 
  
  Notice that $V_{(0,1)}$ and $V_{(1,0)}$ are the non-trivial simple $\g_L$-modules with lowest dimension and $\dim(V_{(1,0)}) = \dim(V_{(0,1)}) = 3$. 
  We show that if the $\g_L$-module decomposition of $\h$ is of the form $\h = V_{(1,0)}^r\oplus V_{(0,1)}^s$, then $\g$ cannot be a sympathetic Lie algebra. Assume $\h = V_{(1,0)}^r\oplus V_{(0,1)}^s$ as $\g_L$-modules, and set $\h_{(1,0)}=V_{(1,0)}^r$ and $\h_{(0,1)}=V_{(0,1)}^s$.

  \medskip
  From the tensor product decompositions 
    \[
    \begin{aligned}
    V_{(1,0)}\otimes V_{(1,0)} &= V_{(2,0)} \oplus V_{(0,1)} \\
    V_{(0,1)}\otimes V_{(0,1)} &= V_{(0,2)} \oplus V_{(1,0)} \\
    V_{(1,0)}\otimes V_{(0,1)} &= V_{(1,1)} \oplus V_{(0,0)},
  \end{aligned}
  \]
  we see that 
  \[
    [\h_{(1,0)},\h_{(1,0)}]\subset \h_{(0,1)},\quad
    [\h_{(1,0)},\h_{(0,1)}] = 0,\quad
    [\h_{(0,1)},\h_{(0,1)}]\subset \h_{(1,0)}.
  \] 
For any two simple modules $A,B\subset \h$, $[A,B] = C\neq 0$ if, and only if, $A,B\subset \h_{(1,0)}$ or $A,B\subset \h_{(0,1)}$. 
Let $A,B\subset \h$ be simple modules satisfying $[A,B] = C\neq 0$ and suppose, without loss of generality, that 
  $A,B\subset \h_{(1,0)}$ and thus $C\in\h_{(0,1)}$. 
 Since $[B,D]= [A,D] = 0$ for any $D\in \h_{(0,1)}$, we have 
  \[
  0 = [[A,B], D] +[[B,D], A] + [[D,A],B]  = [C,D].
  \] Therefore $C\subset Z(\h)$ and, more generally, we have $[\h,\h]\subset Z(\h)$. By \Cref{prop:derived_ideal_in_Z}, $\g$ is not a sympathetic Lie algebra.
  As a consequence of this, $\h$ must contain at least one simple $\g_L$-module with dimension 
strictly greater than $3$. The smallest possible such simple modules are $V_{(1,1)}$, $V_{(2,0)}$ and 
$V_{(0,2)}$ with $\dim(V_{(1,1)})=\dim(V_{(0,2)})=\dim(V_{(2,0)})=6$. Using this and \Cref{thm:symp_at_least_4_repr} 
we obtain a lower bound 
  \[
    \dim(\g) = \dim (\g_L) + \dim(\h) \geq 8 + 3\cdot 3 + 6= 23. 
 \]
 In particular, if $s(\g)\geq 5$ and $\g$ is a sympathetic Lie algebra, then $\dim(\g) > 25$.
Hence a Lie algebra $\g$ with $\dim(\g)\leq 25$ is sympathetic if, and only if, $\h$ decomposes exactly into $4$ simple modules: one of dimension 6, and three of dimension 3.
  We will show that $\g$ cannot have this structure.

\medskip 
We will only prove the case where $V_{(1,1)}$ is contained in the decomposition of 
$\h$, as the other cases are similar. Recall that the algebra structure of $\h$ is one of the 
multiplications determined in \Cref{prop:symp_mult_tables}. We use the following decompositions of tensor products
  \[
    \begin{aligned}
    V_{(1,1)}\otimes V_{(1,1)} &= V_{(2,2)}\oplus V_{(3,0)}\oplus V_{(0,3)}\oplus V_{(0,0)}\\
    V_{(1,1)}\otimes V_{(1,0)} &= V_{(2,1)}\oplus V_{(1,0)}\oplus V_{(0,2)},\\
    V_{(1,1)}\otimes V_{(0,1)} &= V_{(1,2)}\oplus V_{(0,1)}\oplus V_{(2,0)}.\\
    \end{aligned}
  \]
  Notice that
   \[
     V_{(1,0)}\subsetneq V_{(1,1)}\otimes V_{(1,1)},\qquad V_{(0,1)} \subsetneq V_{(1,1)}\otimes V_{(1,1)},
  \] implies $[V_{(1,1)},V_{(1,1)}] = 0$. Therefore, if the multiplicative structure of $\h$ is isomorphic that of tables 1,3-6, then $V_{(1,1)}= V_p,V_k$; and $V_{(1,1)}= V_m,V_p,V_k$ otherwise.
  The fact that 
   \[
    V_{(1,1)}\subsetneq V_{(1,0)}\otimes V_{(1,0)}, \qquad V_{(1,1)}\subsetneq V_{(0,1)}\otimes V_{(0,1)},
  \] implies $V_{(1,1)}\neq [V_{(0,1)},V_{(0,1)}]$ and $V_{(1,1)}\neq [V_{(1,0)},V_{(1,0)}]$. 
  In consequence, $V_{(1,1)}\neq V_k$ and the algebra structure of $\h$ cannot be isomorphic to tables 1,3,4,6.
  In tables 2 and 5, $V_{(1,1)} = V_p = [V_n,V_m]$ implies that $V_n\not\cong  V_m$. If $V_n = V_{(1,0)}$ then $[V_n,V_p] = [V_{(1,0)},V_{(1,1)}] = V_k$ implies $V_k = V_{(1,0)}$. However $V_k = [V_n,V_n]$ but this contradicts $V_{(1,0)}\subsetneq  V_{(1,0)}\otimes V_{(1,0)}$. 
 Similarly, $V_{(1,1)} \neq V_m$.
\end{proof}

\subsection{Sympathetic Lie algebras with Levi subalgebra $\g_L = \s_2(\C)$}\label{subsec:symp_alg_sl2}

For completeness, we begin this subsection with standard results of the representation theory of $\s_2(\C)$.

\subsubsection{Irreducible representations of $\s_2(\C)$}
Let 
\[
  \s_2(\C)=\lbrace A\in \Mat(2\times 2,\C) \ \vert \ \Tr (A)=0\rbrace
\] and let 
\[
\lbrace H = \left( \begin{smallmatrix} 1&0 \\ 0&-1\end{smallmatrix}\right), E=\left( \begin{smallmatrix} 0&1 \\ 0&0
\end{smallmatrix}\right), F=\left( \begin{smallmatrix} 0&0 \\ 1&0 \end{smallmatrix}\right) \rbrace
\] be a basis  
of $\s_2(\C)$. Notice that $\s_2(\C)$ is a {\it simple} Lie algebra equipped with the commutator $[,]$. Moreover, this choice of basis satisfies
\[
[H,E] = 2E, \quad [H,F]= -2F, \quad [E,F]= H.
\]

\medskip
Denote by $V_n$ to the $n+1$ dimensional vector space with basis 
$\{e_0,e_1,\ldots,e_n\}$. Set $e_{-1}:=e_{n+1}:=0$. We now recall classical results, 
which can be found in \cite{Ha}, \cite{H}, \cite{J} or \cite{O-V}.

\begin{Prop}\label{prop:explicit_irreps}
Let $n\in\N$ and $\rho_n: \s_2(\C) \to \gl(V_n)$ be the linear representation given by
\begin{itemize}
\item $\rho_n(H) (e_i) = (n -2i) e_i$, 
\item $\rho_n(F)(e_i) = e_{i+1}$,
\item $\rho_n(E)(e_i) = i(n+1 -i) e_{i-1}$,
\end{itemize}
for every $0 \leq i \leq n$.
Then
\begin{enumerate}
\item $\rho_n$ is an irreducible representation of $\s_2(\C)$,
\item if $\rho:\s_2(\C) \to \gl(V)$ is an irreducible representation of $\s_2(\C)$ with $\dim V \geq 1$, 
then there exists 
$n \in \N$ such that $V=V_n$
\end{enumerate}
\end{Prop}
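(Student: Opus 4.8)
The plan is to treat the two assertions separately: verify first that the explicit formulas genuinely define an irreducible representation, and then run the standard highest-weight argument to show that every irreducible representation arises in this way.

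For part (1), I would first check that $\rho_n$ respects the bracket relations $[H,E]=2E$, $[H,F]=-2F$, $[E,F]=H$ by evaluating both sides on each basis vector $e_i$; this is a direct calculation with the given formulas. For irreducibility, the key point is that $\rho_n(H)$ acts diagonally on the basis $\{e_0,\dots,e_n\}$ with the $n+1$ pairwise distinct eigenvalues $n-2i$. Any nonzero invariant subspace $W$ is stable under $\rho_n(H)$, and because these eigenvalues are distinct, $W$ must be spanned by a subset of the $e_i$. Since $\rho_n(F)$ sends $e_i \mapsto e_{i+1}$ and $\rho_n(E)$ sends $e_i \mapsto i(n+1-i)e_{i-1}$ with nonzero coefficient for $1\le i\le n$, once $W$ contains a single $e_i$ it must contain all of them, forcing $W=V_n$.

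For part (2), let $\rho\colon \s_2(\C)\to\gl(V)$ be irreducible with $\dim V\ge 1$. Since $\C$ is algebraically closed, $\rho(H)$ has an eigenvector; using the relation $\rho(H)\rho(E)=\rho(E)\rho(H)+2\rho(E)$, one sees that $\rho(E)$ raises the $H$-eigenvalue by $2$, so iterating $\rho(E)$ on this eigenvector and taking the last nonzero image produces a vector $v$ with $\rho(E)v=0$ and $\rho(H)v=\lambda v$ for some $\lambda\in\C$. Setting $v_i:=\rho(F)^i v$, I would prove by induction the relations $\rho(H)v_i=(\lambda-2i)v_i$ and $\rho(E)v_i=i(\lambda-i+1)v_{i-1}$. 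Finite-dimensionality forces $v_{n+1}=0$ for the largest $n$ with $v_n\ne 0$; applying $\rho(E)$ to $v_{n+1}=0$ then yields $(n+1)(\lambda-n)v_n=0$, hence $\lambda=n\in\N$. The span of $v_0,\dots,v_n$ is stable under $\rho(E)$, $\rho(F)$ and $\rho(H)$, so by irreducibility it is all of $V$, and the displayed relations identify the action with $\rho_n$, i.e.\ $V\cong V_n$.

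The main obstacle is the closing step of part (2): extracting a highest-weight vector without assuming a priori that $\rho(H)$ acts semisimply, and then deriving the integrality constraint $\lambda=n$. Everything hinges on setting up the inductive formula for $\rho(E)v_i$ correctly, since it is this relation, together with $\rho(E)v_{n+1}=0$, that both terminates the chain and pins down the highest weight to be the nonnegative integer $n$.
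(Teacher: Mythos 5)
Your proof is correct, and it matches the paper, which offers no proof of its own for this proposition: it is stated as a recalled classical result with pointers to the standard references, where exactly your argument appears --- direct verification of the relations $[H,E]=2E$, $[H,F]=-2F$, $[E,F]=H$ on the basis, irreducibility via the $n+1$ distinct eigenvalues of $\rho_n(H)$ together with the nonvanishing of the coefficients $i(n+1-i)$, and the highest-weight induction $\rho(E)v_i = i(\lambda-i+1)v_{i-1}$ combined with $\rho(E)v_{n+1}=0$ to force $\lambda=n$ and $V\cong V_n$. Your handling of the one subtle point is also sound: no semisimplicity of $\rho(H)$ is assumed, since over $\C$ a single eigenvector exists and iterating $\rho(E)$ (which shifts the eigenvalue by $2$, so only finitely many steps are possible) produces the highest-weight vector.
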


We will write $V_0 = \C$ and $\rho_0:\s_2(\C) \to \gl(V_0)$ for the trivial representation
$\rho_0(x) = 0$ for every $x\in \s_2(\C)$.

\medskip
  Given any finite dimensional linear representation $\rho:\s_2(\C) \to \gl(V)$ of $\s_2(\C)$ 
  there is a unique decomposition
  \[
  V= V_{n_1} \oplus \cdots \oplus V_{n_k}.
  \]
  with 
  $\dim_{\C}V = (n_1+1)  + \cdots + (n_k+1)$ for some
  $n_1 \leq n_2 \leq \ldots \leq n_k$.

\begin{Theorem}[Clebsh-Gordan]\label{thm:Clebsh-Gordan}
For any $n \geq m$,
\begin{equation}\label{eqn:tensor_decomp}
V_n \otimes V_m \simeq
\bigoplus_{i=0}^m V_{n+m -2i}
\simeq V_{n+m}\oplus V_{n+m -2} \oplus V_{n+m -4} \oplus \cdots \oplus V_{n-m}.
\end{equation}
\end{Theorem}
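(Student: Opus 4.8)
The plan is to exploit complete reducibility together with a count of weight-space dimensions. By \Cref{thm:Weyl}, the tensor product representation on $V_n \otimes V_m$ is completely reducible, so by \Cref{prop:explicit_irreps} there exist non-negative integers $c_k$ with
\[
V_n \otimes V_m \cong \bigoplus_{k \geq 0} V_k^{\oplus c_k}.
\]
First I would record the weight data. Using the formula $\rho_n(H)(e_i) = (n-2i)e_i$ from \Cref{prop:explicit_irreps}, the basis vector $e_i \otimes f_j$ (with $\{e_i\}$ a basis of $V_n$ and $\{f_j\}$ a basis of $V_m$) is an eigenvector of $(\rho_n \otimes \rho_m)(H)$ with eigenvalue $(n-2i)+(m-2j)$. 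Writing $d_w$ for the dimension of the weight-$w$ eigenspace, the key reduction is that each summand $V_k$ contributes exactly $1$ to $d_w$ for every $w \in \{k, k-2, \dots, -k\}$ and $0$ otherwise; hence the multiplicities are recovered by $c_k = d_k - d_{k+2}$.

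It then remains to compute $d_w$ explicitly. Setting $s = i+j$, a weight-$w$ vector $e_i \otimes f_j$ satisfies $w = (n+m) - 2s$, so $d_w$ equals the number of pairs $(i,j)$ with $0 \le i \le n$, $0 \le j \le m$ and $i+j = s$. A routine lattice-point count, using the hypothesis $n \geq m$, gives $d_w = s+1$ for $0 \le s \le m$, then $d_w = m+1$ for $m \le s \le n$, and $d_w = n+m-s+1$ for $n \le s \le n+m$. Consequently, as $w$ runs downward by steps of $2$ from $n+m$, the value $d_w$ strictly increases by $1$ until $w = n-m$ and thereafter stays constant at $m+1$ throughout $-(n-m) \le w \le n-m$. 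Feeding this into $c_k = d_k - d_{k+2}$ yields $c_k = 1$ precisely for $k \in \{n+m, n+m-2, \dots, n-m\}$ and $c_k = 0$ otherwise, which is exactly \Cref{eqn:tensor_decomp}.

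The only genuinely delicate point is the reduction $c_k = d_k - d_{k+2}$, which rests on two inputs already available: complete reducibility (\Cref{thm:Weyl}) guarantees a decomposition into the $V_k$ exists at all, while the explicit weight description in \Cref{prop:explicit_irreps} pins down exactly which weights each $V_k$ carries and with what multiplicity. Everything after that is the elementary combinatorics above. An equivalent and perhaps cleaner packaging replaces weight counting by characters: setting $\chi_n(t) = \sum_{i=0}^n t^{\,n-2i} = (t^{n+1} - t^{-(n+1)})/(t - t^{-1})$, one verifies the identity $\chi_n \chi_m = \sum_{i=0}^m \chi_{n+m-2i}$ directly from the geometric-series formula; since a finite-dimensional $\s_2(\C)$-module is determined up to isomorphism by its character, \Cref{eqn:tensor_decomp} follows at once. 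I would present the weight-counting argument as the main proof and mention the character identity as the conceptual shortcut.
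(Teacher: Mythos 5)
Your proof is correct, but note that the paper offers no proof of this theorem at all: it is quoted as a classical result with references to \cite{Ha}, \cite{H}, \cite{J}, \cite{O-V}, so there is no in-paper argument to compare against. Your weight-counting argument --- computing the weight multiplicities $d_w$ of $V_n\otimes V_m$ by the lattice-point count (where the hypothesis $n\geq m$ enters), then recovering the irreducible multiplicities via $c_k = d_k - d_{k+2}$ from complete reducibility (\Cref{thm:Weyl}) and the explicit weights in \Cref{prop:explicit_irreps} --- is exactly the standard textbook proof found in those references, and your character identity $\chi_n\chi_m=\sum_{i=0}^m\chi_{n+m-2i}$ is a correct and equally standard repackaging of the same computation.
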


\begin{Cor}\label{cor:Clebsh-Gordan_sym_ext}
  For $n=m$,  
  \begin{enumerate}
    \item $\Lambda^2 V_n \simeq  \bigoplus_{i=1}^{[(n+1)/2]} V_{(2n+2)-4i} $,
    \item $S^2 V_n \simeq \bigoplus_{j=0}^{[n/2]} V_{2n - 4j}$,
    \item $V_n \otimes V_n \simeq \Lambda^2 V_n \oplus S^2 V_n$.
  \end{enumerate}
  \end{Cor}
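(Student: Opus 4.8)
The decomposition in item (3) is the standard splitting $V_n\otimes V_n = S^2 V_n \oplus \Lambda^2 V_n$ of the tensor square of any vector space over a field of characteristic $\neq 2$ into its symmetric and skew-symmetric parts; since the swap involution $\tau(u\otimes v) = v\otimes u$ is $\s_2(\C)$-equivariant, both summands are $\s_2(\C)$-submodules. The plan is therefore to start from the Clebsch--Gordan decomposition of \Cref{thm:Clebsh-Gordan} with $n=m$,
\[
V_n\otimes V_n \simeq \bigoplus_{i=0}^{n} V_{2n-2i},
\]
in which every irreducible summand occurs with multiplicity one, and to determine which summands are symmetric and which are skew-symmetric by tracking the action of $\tau$.

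Because each $V_{2n-2i}$ appears exactly once, $\tau$ restricts to an $\s_2(\C)$-equivariant endomorphism of it, so by Schur's lemma (\Cref{thm:Schur}) it acts as a scalar; as $\tau^2=\Id$, this scalar is $+1$ or $-1$. The summands on which $\tau$ acts by $+1$ assemble into $S^2 V_n$ and those on which it acts by $-1$ into $\Lambda^2 V_n$. It therefore suffices to compute the sign $\epsilon_i$ of $\tau$ on $V_{2n-2i}$. I would do this by exhibiting the highest weight vector of that summand: the weight space of weight $2n-2i$ is spanned by $\{e_j\otimes e_{i-j}\}_{0\le j\le i}$, and the unique (up to scalar) vector $w_i=\sum_{j=0}^{i} c_j\, e_j\otimes e_{i-j}$ annihilated by $\rho_n(E)\otimes\Id+\Id\otimes\rho_n(E)$ is found from the resulting two-term recursion on the $c_j$, using $\rho_n(E)(e_j)=j(n+1-j)e_{j-1}$ from \Cref{prop:explicit_irreps}. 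Solving the recursion forces $c_{i-j}=(-1)^i c_j$, whence $\tau(w_i)=(-1)^i w_i$ and $\epsilon_i=(-1)^i$.

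With the signs in hand the result follows by reindexing. The skew-symmetric summands are those with $i$ odd; writing $i=2t-1$ gives $V_{2n-2i}=V_{(2n+2)-4t}$, and the constraint $0\le i\le n$ becomes $1\le t\le[(n+1)/2]$, yielding item (1). The symmetric summands are those with $i$ even; writing $i=2j$ gives $V_{2n-2i}=V_{2n-4j}$ with $0\le j\le[n/2]$, yielding item (2). The main obstacle is the sign computation, for which one must carry out the recursion and verify the symmetry $c_{i-j}=(-1)^i c_j$ of its solution; it can be cross-checked against the dimension count $\dim S^2 V_n = \binom{n+2}{2}$ and $\dim\Lambda^2 V_n=\binom{n+1}{2}$, which must equal the sums of the dimensions of the respective summands.
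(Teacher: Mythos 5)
Your proposal is correct and complete, but there is nothing in the paper to compare it against: the paper states \Cref{cor:Clebsh-Gordan_sym_ext} without proof, recalling it as a classical fact from the standard references (\cite{Ha}, \cite{H}, \cite{J}, \cite{O-V}). Your argument supplies a valid self-contained proof. The key points all check out: the Clebsch--Gordan decomposition $V_n\otimes V_n\simeq\bigoplus_{i=0}^{n}V_{2n-2i}$ is multiplicity-free, so the equivariant involution $\tau$ preserves each irreducible summand and acts on it by $\pm1$ by \Cref{thm:Schur}; and the sign computation goes through as you sketch it. Concretely, writing $w_i=\sum_{j=0}^{i}c_j\,e_j\otimes e_{i-j}$ and imposing annihilation by $\rho_n(E)\otimes\Id+\Id\otimes\rho_n(E)$ gives the two-term recursion
\[
c_{j+1}\,(j+1)(n-j)=-\,(i-j)(n+1-i+j)\,c_j,
\]
whose solution is $c_j=(-1)^j\binom{i}{j}\frac{(n-j)!\,(n-i+j)!}{n!\,(n-i)!}\,c_0$, and this indeed satisfies $c_{i-j}=(-1)^{i}c_j$, hence $\tau(w_i)=(-1)^i w_i$ and $\epsilon_i=(-1)^i$. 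The reindexings $i=2t-1$ (skew) and $i=2j$ (symmetric) then give exactly the stated index ranges $1\le t\le[(n+1)/2]$ and $0\le j\le[n/2]$, and the dimension cross-check $\dim S^2V_n=\binom{n+2}{2}$, $\dim\Lambda^2V_n=\binom{n+1}{2}$ is a sound sanity test. One small presentational note: rather than saying $\tau$ ``restricts'' to each summand automatically, it is cleaner to say $\tau$ preserves each isotypic component, which here coincides with the irreducible summand by multiplicity-freeness --- but this is exactly the argument you intend, so it is a matter of wording, not a gap.
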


\medskip
Given a Lie algebra $\g$ with Levi decomposition  $\g = \s_2(\C) \ltimes \h$ and an $\s_2(\C)$-module decomposition $\h = V_{n_1}\oplus\cdots\oplus V_{n_k}$, we define
  \[ 
    \h^0 := \bigoplus\limits_{\{n_i \mid\ n_i\equiv 0\mod{2}\}} V_{n_i} \qquad
    \h^1 := \bigoplus\limits_{\{n_i \mid\ n_i\equiv 1\mod{2}\}} V_{n_i}.
  \]
We show that the product in $\h$ is $\Z_2$-graded:

\begin{Lemma}\label{lem:Z_2_graded}
Let $\g$ be a Lie algebra, with Levi decomposition given by $\g = \s_2(\C) \ltimes \h$ and $\h = \h^0 \oplus \h^1$. Then $\h^0$ is a subalgebra of $\h$ and $[\h^1,\h^1]\subset \h^0$ and $[\h^1,\h^0] \subset \h^1$.
  In particular, if $[V_{n_i},V_{n_j}] = V_{n_l}$ then $n_i + n_j \equiv n_l\mod{2}$. 
\end{Lemma}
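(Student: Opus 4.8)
The plan is to reduce everything to a parity computation on highest weights, using the fact (established in \Cref{prop:Levi_bracket_symp} and its Corollary) that the bracket between two simple summands of $\h$ is governed entirely by $\s_2(\C)$-equivariant morphisms out of their tensor product. First I would recall that, by \Cref{item:bracket_two_irreps} of \Cref{prop:Levi_bracket_symp}, if $V_{n_i}, V_{n_j}$ are simple summands of $\h$ with $[V_{n_i},V_{n_j}]\neq 0$, then every simple module $V_{n_l}$ appearing in the bracket must already occur as a submodule of $V_{n_i}\otimes V_{n_j}$. This is precisely where Schur's Lemma (\Cref{thm:Schur}) enters: a nonzero equivariant morphism $V_{n_i}\otimes V_{n_j}\to V_{n_l}$ forces $V_{n_l}$ to be a constituent of the source.

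The key step is then to apply the Clebsh--Gordan decomposition (\Cref{thm:Clebsh-Gordan}): for $n_i\geq n_j$ one has $V_{n_i}\otimes V_{n_j}\simeq\bigoplus_{s=0}^{n_j} V_{n_i+n_j-2s}$, so every constituent has highest weight $n_i+n_j-2s\equiv n_i+n_j\pmod 2$. Consequently every $V_{n_l}$ occurring in $[V_{n_i},V_{n_j}]$ satisfies $n_l\equiv n_i+n_j\pmod 2$, which is exactly the ``in particular'' claim of the statement. The symmetric case $i=j$ is handled on the same footing, since the relevant morphisms then factor through $\Lambda^2 V_{n_i}$, whose constituents (see \Cref{cor:Clebsh-Gordan_sym_ext}) all have highest weight $\equiv 2n_i\equiv 0\pmod 2$.

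Finally I would read off the three grading assertions from this parity rule, splitting into cases according to the parities of $n_i$ and $n_j$. If $V_{n_i},V_{n_j}\subset \h^0$ then $n_i,n_j$ are even, hence $n_l$ is even and the bracket lands in $\h^0$, giving that $\h^0$ is a subalgebra; if both lie in $\h^1$ then $n_i,n_j$ are odd and $n_i+n_j$ is even, so $[\h^1,\h^1]\subset\h^0$; and if one lies in $\h^0$ and the other in $\h^1$ then $n_i+n_j$ is odd, so $[\h^1,\h^0]\subset\h^1$. Extending by bilinearity over the direct-sum decompositions defining $\h^0$ and $\h^1$ completes the argument.

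I do not anticipate a genuine obstacle here: the entire content is the observation that Clebsh--Gordan preserves the parity of the highest weight, and the only care required is to invoke the earlier structural results correctly and to reduce from the module-level statement to the element-level brackets by bilinearity.
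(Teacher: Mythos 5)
Your proposal is correct and is essentially the paper's own argument: the paper likewise proves the lemma by combining Schur's Lemma with the Clebsh--Gordan decomposition (\Cref{thm:Clebsh-Gordan}), observing that the constituents of $V_{n_i}\otimes V_{n_j}$ all have highest weight congruent to $n_i+n_j \bmod 2$, and reading off the three grading assertions by parity. Your write-up merely makes explicit a few details the paper leaves implicit (the mixed-parity case and the reduction via \Cref{prop:Levi_bracket_symp}), with no substantive difference in method.
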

\begin{proof}
  By the \Cref{thm:Clebsh-Gordan}, the decompositions \[
  V_{2n+1}\otimes V_{2m+1} = V_{2(n+m)+2} \oplus \cdots \oplus V_{2(m-n)}
  \] 
  \[
  V_{2n}\otimes V_{2m} = V_{2(n+m)} \oplus \cdots \oplus V_{2(m-n)}
  \]
  only contain simple $\s_2(\C)$-modules with $p\equiv 0 \mod{2}$.
  
\end{proof}

\begin{Lemma} \label{lem:impares}
Let $\g$ be a Lie algebra with Levi decomposition given by $\g = \s_2(\C) \ltimes \h$. If $\h = \h^1$, then the radical $\h$ is abelian.  In particular, $\g$ is not sympathetic.
\end{Lemma}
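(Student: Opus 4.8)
The plan is to show that if $\h = \h^1$ (that is, every simple $\g_L$-module $V_{n_i}$ in the decomposition of $\h$ has odd highest weight), then the product on $\h$ forces $\h$ to be abelian, from which the failure of the sympathetic condition follows immediately.

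First I would apply \Cref{lem:Z_2_graded}: since $\h = \h^1$ with $\h^0 = 0$, the $\Z_2$-grading gives $[\h^1,\h^1] \subset \h^0 = 0$. In other words, for any two simple modules $V_{n_i}, V_{n_j}$ in $\h$, both $n_i$ and $n_j$ are odd, so $n_i + n_j$ is even; but any $V_{n_l}$ appearing in $[V_{n_i},V_{n_j}]$ must satisfy $n_l \equiv n_i + n_j \equiv 0 \mod 2$, hence $n_l$ is even. Since $\h$ contains only odd-weight modules, by Schur's Lemma (\Cref{thm:Schur}) together with \Cref{prop:Levi_bracket_symp}(\ref{item:bracket_two_irreps}) the bracket must vanish: $[V_{n_i},V_{n_j}] = 0$ for all $i,j$. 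Therefore $[\h,\h] = 0$ and $\h$ is abelian.

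To conclude that $\g$ is not sympathetic, I would note that an abelian radical obstructs perfectness. If $\g$ were perfect, then $\g = [\g,\g] = [\g_L \ltimes \h, \g_L \ltimes \h] \subset \g_L + [\g_L,\h] + [\h,\h]$; since $[\h,\h]=0$, the radical part of $[\g,\g]$ is contained in $[\g_L,\h] \subset \h$, which is consistent, but the key point is that an abelian nilradical cannot yield a sympathetic algebra. More directly, I would invoke \Cref{prop:derived_ideal_in_Z}: since $[\h,\h] = 0 \subset Z(\h) = \h$, the hypothesis $[\h,\h]\subset Z(\h)$ is trivially satisfied, and that proposition yields an explicit semisimple outer derivation (scaling $\h$), so $\g$ fails to be sympathetic.

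This argument is essentially routine given the machinery already in place, so I do not expect a serious obstacle; the only subtlety is making sure that the equivariance constraint is applied correctly, namely that \Cref{prop:Levi_bracket_symp} guarantees any nonzero bracket between simple modules lands in a submodule appearing in their tensor product, whose highest weights are all even by Clebsch-Gordan, contradicting $\h = \h^1$. Once abelianness is established, the non-sympathetic conclusion is immediate from \Cref{prop:derived_ideal_in_Z}.
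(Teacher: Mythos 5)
Your proof is correct and takes essentially the same route as the paper: it combines \Cref{lem:Z_2_graded} (the Clebsch--Gordan parity constraint) with Schur's Lemma to force every bracket between the odd-weight simple modules of $\h$ to vanish, so $\h$ is abelian, and then applies \Cref{prop:derived_ideal_in_Z} to $[\h,\h]=0\subset Z(\h)$ to conclude $\g$ is not sympathetic. Your brief digression about perfectness is unnecessary (and you rightly abandon it); the direct appeal to \Cref{prop:derived_ideal_in_Z} is exactly the paper's conclusion.
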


\begin{proof}
Combining \Cref{lem:Z_2_graded} with Schur's Lemma \ref{thm:Schur} we conclude 
that any bilinear $\s_2(\C)$-equivariant morphism 
$\Gamma\colon V_{2n+1}\otimes V_{2m+1} \to V_{2l+1}$ is trivial.
In particular, the Lie bracket $[\ ,\ ]\colon V_{n}\otimes V_{m} \to \h$ is zero for every 
$n,m\in\{n_1, \ldots , n_k\}$. By \Cref{prop:derived_ideal_in_Z}, $\g$ is not a sympathetic Lie algebra.

\end{proof}

\begin{Cor}\label{cor:even_Z}
Let $\g$ be a Lie algebra with Levi decomposition $\g = \s_2(\C) \ltimes\h$ and $\h=\h^0\oplus\h^1$.
If $\h^0\subset Z(\h)$, then $\g$ is not a sympathetic Lie algebra.
  \end{Cor}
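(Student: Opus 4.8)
The plan is to show that if $\h^0 \subset Z(\h)$, then the whole derived algebra $[\h,\h]$ is contained in the center $Z(\h)$, after which \Cref{prop:derived_ideal_in_Z} immediately yields that $\g$ is not sympathetic. The starting observation is the $\Z_2$-grading of \Cref{lem:Z_2_graded}: since $\h = \h^0 \oplus \h^1$ with $[\h^0,\h^0]\subset \h^0$, $[\h^1,\h^1]\subset \h^0$, and $[\h^0,\h^1]\subset \h^1$, every bracket of two elements of $\h$ lands in a controlled piece of the grading. I would split the computation of $[\h,\h]$ into the three bracket types dictated by this grading.

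First I would handle the brackets that produce elements of $\h^0$. By the grading, $[\h^0,\h^0]\subset \h^0$ and $[\h^1,\h^1]\subset \h^0$. Since by hypothesis $\h^0 \subset Z(\h)$, both of these brackets land inside $Z(\h)$. This disposes of all brackets except those of mixed parity, namely $[\h^0,\h^1]\subset \h^1$. For this remaining case I would use that the first factor already lies in the center: if $x\in\h^0$ and $y\in\h^1$, then $x\in Z(\h)$ forces $[x,y]=0$. Hence $[\h^0,\h^1]=0\subset Z(\h)$ as well.

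Combining the three cases, every generator of $[\h,\h]$ lies in $Z(\h)$, so $[\h,\h]\subset Z(\h)$. The main (and only genuine) obstacle is simply making the bookkeeping of the grading explicit; once the three bracket types are listed there is no further difficulty, since the hypothesis $\h^0\subset Z(\h)$ is strong enough to kill each type either by centrality of $\h^0$ or by the grading forcing the output into $\h^0$. I would then invoke \Cref{prop:derived_ideal_in_Z}, which applies because $\g=\g_L\ltimes\h$ is perfect with simple Levi subalgebra $\g_L=\s_2(\C)$ and we have just shown $[\h,\h]\subset Z(\h)$; this gives an explicit semisimple outer derivation and therefore $\g$ is not sympathetic, completing the argument.
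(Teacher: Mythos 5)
Your proof is correct and follows essentially the same route as the paper: the paper's one-line proof deduces $[\h,\h]\subset Z(\h)$ from Clebsch--Gordan (i.e., the $\Z_2$-grading of \Cref{lem:Z_2_graded}) and then invokes \Cref{prop:derived_ideal_in_Z}, exactly as you do, your case analysis merely making the grading bookkeeping explicit. One small wording caveat: perfectness of $\g$ is not a hypothesis of the corollary, so strictly you should note (as the paper leaves implicit) that a non-perfect $\g$ is trivially non-sympathetic, after which \Cref{prop:derived_ideal_in_Z} applies in the perfect case.
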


\begin{proof} 
From Clebsh-Gordan's \Cref{thm:Clebsh-Gordan} follows that $[\h,\h] \subset Z(\h)$. By \Cref{prop:derived_ideal_in_Z}, $\g$ is not a sympathetic Lie algebra.

  \end{proof}

\begin{Cor}\label{thm:even_irreps}
Let $\g$ be a sympathetic Lie algebra with Levi decomposition given by $\g= \s_2(\C) \ltimes \h$ and with $\h= V_n \oplus V_m \oplus V_p \oplus V_k$ and $Z(\h)=V_k$. Then $n\equiv m\equiv p\equiv k\equiv 0 \mod{2}$. Furthermore, in each case of \Cref{prop:symp_mult_tables} the following additional conclusions hold.
\begin{enumerate}
  \item $p\equiv k \equiv 2 \mod{4}$,
  \item $k \equiv 2 \mod{4}$,
  \item $p\equiv k \equiv 2 \mod{4}$,
  \item $k \equiv 2 \mod{4}$,
  \item $p\equiv k \equiv 2 \mod{4}$.
\end{enumerate}
\end{Cor}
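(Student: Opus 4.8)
The plan is to read the two required congruences directly off the six multiplication tables of \Cref{prop:symp_mult_tables}, using only the $\Z_2$-grading of the bracket and the skew-symmetry of self-brackets. Since $\g$ is sympathetic, by \Cref{prop:symp_mult_tables} its multiplication is (up to isomorphism) one of those six tables, so I fix one of them and argue case by case.

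First I would establish the parity statement. \Cref{lem:Z_2_graded} says that every nonzero relation $[V_a,V_b]=V_c$ forces $a+b\equiv c\pmod 2$. Reading off the nonzero entries of a given table and solving the resulting $\F$-linear system over $\F_2$ shows, in all but one of the tables, that $n\equiv m\equiv p\equiv k\equiv 0\pmod 2$ outright. For instance, when $[V_n,V_n]=V_p$ and $[V_m,V_m]=V_k$ one gets $p,k$ even immediately, and then $[V_n,V_m]=V_p$, $[V_n,V_p]=V_k$ give $n\equiv m$ and $n\equiv k$, forcing all four weights even. The only table for which this $\F_2$-system is underdetermined is the one with $[V_n,V_n]=[V_m,V_m]=V_k$, $[V_n,V_m]=V_p$, $[V_n,V_p]=V_k$ and $[V_m,V_p]=0$: there the equations force $m,k$ even but leave $n\equiv p$ as a single undetermined parity.

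To record the mod $4$ information I would prove the observation that drives the whole corollary: if $[V_a,V_a]=V_c\neq 0$, then by \Cref{prop:Levi_bracket_symp} (item \ref{item:bracket_two_irreps}) the defining morphism is skew-symmetric, so $V_c\subset\Lambda^2 V_a$; by \Cref{cor:Clebsh-Gordan_sym_ext} one has $\Lambda^2 V_a\cong\bigoplus_i V_{2a+2-4i}$, whence $c\equiv 2a+2\pmod 4$. In particular a self-bracket $[V_a,V_a]=V_c$ with $a$ even forces $c\equiv 2\pmod 4$, while with $a$ odd it forces $c\equiv 0\pmod 4$.

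This observation simultaneously settles the exceptional table and produces the refined congruences. In the exceptional table both $[V_n,V_n]=V_k$ and $[V_m,V_m]=V_k$ occur; since $m$ is even the observation gives $k\equiv 2\pmod 4$, whereas $[V_n,V_n]=V_k$ gives $k\equiv 2n+2\pmod 4$, so $2n\equiv 0\pmod 4$ and hence $n$, and therefore $p\equiv n$, is even as well. Thus all four weights are even in every table. Finally I would tabulate, for each of the six tables, the self-brackets with (now known to be) even source: a self-bracket landing in $V_p$ yields $p\equiv 2\pmod 4$ and one landing in $V_k$ yields $k\equiv 2\pmod 4$, reproducing exactly the congruences listed in items (1)--(5). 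The only delicate point is the underdetermined parity of the exceptional table, and it is precisely the mod $4$ comparison of the two self-brackets landing in $V_k$ that breaks the tie; everything else is a direct reading of the tables.
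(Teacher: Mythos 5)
Your argument is, at its core, the same as the paper's: the parity of $n,m,p,k$ is obtained from the $\Z_2$-grading of \Cref{lem:Z_2_graded} applied to each table of \Cref{prop:symp_mult_tables}, and the mod-$4$ congruences from the observation that a self-bracket $[V_a,V_a]=V_c\neq 0$ forces $V_c\subset\Lambda^2 V_a$ by \Cref{prop:Levi_bracket_symp}, whence $c\equiv 2a+2\pmod{4}$ by \Cref{cor:Clebsh-Gordan_sym_ext}. Where you genuinely improve on the paper is the parity step. The paper asserts that evenness ``follows directly'' from \Cref{lem:Z_2_graded}, but you are right that for the table with $[V_n,V_n]=[V_m,V_m]=V_k$, $[V_n,V_m]=V_p$, $[V_n,V_p]=V_k$, $[V_m,V_p]=0$ the resulting system over $\F_2$ yields only $m\equiv k\equiv 0$ and $n\equiv p$: the common parity of $n$ and $p$ is not determined by the grading alone (both odd is consistent with it). Your tie-break --- $m$ even gives $k\equiv 2\pmod{4}$ from $[V_m,V_m]=V_k$, while $[V_n,V_n]=V_k$ gives $k\equiv 2n+2\pmod{4}$, forcing $n$ (hence $p$) even --- is exactly the missing step, and it uses only tools already implicit in the paper's own proof; note the paper's conclusion ``$r\equiv 2\pmod 4$'' likewise silently uses that the source of the self-bracket is already known to be even.

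One caveat: your closing claim that tabulating the self-brackets reproduces ``exactly'' the congruences in items (1)--(5) cannot be literally correct, since \Cref{prop:symp_mult_tables} has six tables and the corollary lists five items. Your own recipe gives, for tables 1 through 6 in order, the congruences $p\equiv k\equiv 2$, $k\equiv 2$, $k\equiv 2$, $p\equiv k\equiv 2$, $k\equiv 2$, $p\equiv k\equiv 2 \pmod{4}$; this matches the printed list only if its items are aligned with tables 1, 2, 4, 5, 6. In particular, for table 3 ($[V_n,V_n]=[V_m,V_m]=V_k$ with $[V_m,V_p]=V_k$) the method yields only $k\equiv 2\pmod{4}$, and nothing in the corollary's toolkit constrains $p=[V_n,V_m]$ modulo $4$. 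That discrepancy sits in the paper's enumeration rather than in your mathematics, but a careful writeup should state the congruence derived for each table explicitly instead of asserting agreement with the numbered list.
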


\begin{proof}
The fact that $n\equiv m\equiv p\equiv k\equiv 0 \mod{2}$ follows directly by applying 
\Cref{lem:Z_2_graded} on each of the possible multiplicative structures of $\h$ 
from \Cref{prop:symp_mult_tables}. 

\medskip
To prove the remaining statements, it is sufficient to observe that if two simple 
$\s_2(\C)$-modules $V_r,V_s$ satisfy $V_r = [V_s,V_s]$ then the skew-symmetry of 
the Lie bracket implies that $V_r\subset \Lambda^2 (V_s)$. Using the decomposition 
from \Cref{thm:Clebsh-Gordan}, we conclude that $r\equiv 2\mod{4}$.

\end{proof}

\begin{Cor}
Let $\g$ be a sympathetic Lie algebra with Levi decomposition given by $\g= \s_2(\C) \ltimes \h$,  such that $\h=V_n \oplus V_m \oplus V_p \oplus V_k$, and $Z(\h) =V_k$. Then $\dim\g = 2N +1$.
\end{Cor}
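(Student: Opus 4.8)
The plan is to reduce the claim to a parity count that is driven entirely by the weight information already obtained in \Cref{thm:even_irreps}. Since the statement only asserts that $\dim\g$ is odd, I would not compute anything finer than the parities of the four highest weights.

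First I would record the dimension formula coming from the Levi decomposition. Because $\g = \s_2(\C)\ltimes\h$ is a direct sum of vector spaces, we have $\dim\g = \dim\s_2(\C) + \dim\h = 3 + \dim\h$. Using $\dim V_{n_i}=n_i+1$ for each simple $\s_2(\C)$-module in the decomposition $\h = V_n\oplus V_m\oplus V_p\oplus V_k$, this gives $\dim\h = (n+1)+(m+1)+(p+1)+(k+1) = n+m+p+k+4$, and hence $\dim\g = n+m+p+k+7$.

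Next I would invoke \Cref{thm:even_irreps}, which guarantees that $n\equiv m\equiv p\equiv k\equiv 0 \pmod{2}$ for a sympathetic $\g$ of this form. Consequently each summand dimension $n_i+1$ is odd, and a sum of four odd integers is even, so $\dim\h$ is even; equivalently $n+m+p+k$ is even. Therefore $\dim\g = 3 + \dim\h$ is the sum of an odd integer and an even integer, and so is odd. Setting $N := \tfrac{1}{2}(n+m+p+k)+3 = \tfrac{1}{2}\dim\h + 1$, which is a genuine integer precisely because all four weights are even, yields $\dim\g = 2N+1$, as required.

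I do not expect any real obstacle here: the entire content of the statement is carried by the parity conclusion of \Cref{thm:even_irreps}, and what remains is a one-line dimension count. The only point that warrants a word of care is verifying that $N$ is an integer, which is exactly the reason the evenness of all four highest weights is needed rather than merely the evenness of their sum.
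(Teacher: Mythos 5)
Your proof is correct and matches the paper's (implicit) argument exactly: the corollary is stated without proof precisely because it follows immediately from \Cref{thm:even_irreps} via the count $\dim\g = 3 + (n+1)+(m+1)+(p+1)+(k+1) = n+m+p+k+7$ with $n+m+p+k$ even. One trivial quibble: for $N$ to be an integer only the evenness of the \emph{sum} $n+m+p+k$ is needed, not of all four weights individually, so your closing remark overstates what the integrality check requires, though of course \Cref{thm:even_irreps} delivers the stronger fact anyway.
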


\medskip
\begin{Prop}\label{thm:possible_cases_sl2}
Let $\g$ be a sympathetic Lie algebra with Levi decomposition given by $\g= \s_2(\C) \ltimes \h$. Suppose that $\h=V_n \oplus V_m \oplus V_p \oplus V_k$, as an $\s_2(\C)$-module, and that $Z(\h) =V_k$, where $n,m,p,k\in\N$. If $\dim(\g)\leq 25$ then one of the following cases must necessarily hold:
  \begin{itemize}
    \item the multiplication is given by 
    one of the tables 1-6 and $(n,m,p,k)$ equals one of
    \begin{gather*}
      (4,4,2,6),(4,6,2,6),(6,4,2,6),(4,4,6,2) ,
       (6,4,6,2),(4,6,6,2),\\(2,2,2,2) ,(2,4,2,2) ,
       (4,2,2,2),(4,4,2,2)
    \end{gather*} 
    \item the multiplication is given by 
    one of the tables 2,3,5 and $(n,m,p,k)$ equals one of
    \begin{gather*}
      (4,4,4,6),(2,2,4,2),(2,4,4,2),(2,6,4,2),
      (4,2,4,2),(4,4,4,2),\\(4,6,4,2),(6,2,4,2),
      (6,4,4,2),(6,6,4,2)
    \end{gather*}
    
    \item the multiplication is given by 
    one of the tables 4,5,6 and $(n,m,p,k)$ equals one of
    \[(4,2,6,2),(4,6,2,2) 
    \]
    \item the multiplication is given by 
    one of the tables 4,5 and $(n,m,p,k)$ equals one of
    \[ (6,2,6,2),(8,2,6,2)\] 
    \item the multiplication is given by 
    table 2 and $(n,m,p,k)$ equals one of
    \[ (4,2,4,6),(6,2,4,6),(4,2,6,6)
    \] 
    \item  the multiplication is given by 
    table 5 and $(n,m,p,k)$ equals one of
    \[(6,2,8,2), (4,8,4,2).
    \]
  \end{itemize}
  
\end{Prop}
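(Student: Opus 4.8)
The plan is to convert the statement into a finite enumeration of quadruples $(n,m,p,k)$. Since $\g$ is sympathetic, \Cref{prop:symp_mult_tables} tells us that $[\h,\h]=V_p\oplus V_k$, that $Z(\h)=V_k$, and that the multiplication of $\h$ is one of the six tables listed there; moreover \Cref{thm:even_irreps} forces $n\equiv m\equiv p\equiv k\equiv 0\pmod{2}$, together with the indicated congruences modulo $4$ on $p$ and $k$ in each table. Since
\[
\dim(\g)=\dim(\s_2(\C))+\dim(\h)=3+(n+1)+(m+1)+(p+1)+(k+1)=n+m+p+k+7,
\]
the hypothesis $\dim(\g)\le 25$ is equivalent to $n+m+p+k\le 18$. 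This reduces the problem to a search over even quadruples of bounded sum.

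First I would tabulate, for each of the six tables, the entries $[V_a,V_b]=V_c$ that are required to be nonzero, and translate each into a membership condition. By \Cref{thm:Clebsh-Gordan} a nonzero equivariant bracket $V_a\otimes V_b\to V_c$ can exist only if $|a-b|\le c\le a+b$ and $c\equiv a+b\pmod{2}$; and for a diagonal entry $[V_a,V_a]=V_c$ skew-symmetry forces $V_c\subset\Lambda^2 V_a$, so by \Cref{cor:Clebsh-Gordan_sym_ext} (with $a$ even) we need $c\equiv 2\pmod{4}$ and $2\le c\le 2a-2$. These are precisely the necessary conditions a quadruple must satisfy to support the given table, matching the direction required by the phrase ``must necessarily hold''. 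The key observation is that the diagonal brackets $[V_n,V_n]$ and, when present, $[V_m,V_m]$ are the most restrictive: they pin down which of $p,k$ lies in the residue class $2\pmod{4}$ and provide the sharp upper bounds $p\le 2n-2$ or $k\le 2m-2$, drastically pruning the search.

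Then I would run the enumeration table by table. For a fixed table, the diagonal conditions first restrict the admissible pairs $(n,p)$ or $(m,k)$; the remaining off-diagonal inequalities $|a-b|\le c\le a+b$ cut down the companion weights; and the bound $n+m+p+k\le 18$ closes off the finitely many remaining possibilities, yielding a concrete finite list for each table. Finally I would collate these lists. A given quadruple may satisfy the membership constraints of several tables at once, so for each surviving quadruple I would record the exact set of tables whose nonzero-bracket pattern it can realize; this is precisely how the statement groups the answer (tables $1$–$6$, tables $2,3,5$, tables $4,5,6$, tables $4,5$, table $2$, table $5$).

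The main obstacle is the bookkeeping: the analysis is a long but elementary finite check, and the genuine difficulty lies in guaranteeing completeness (no admissible quadruple is overlooked) and correctness of the grouping (each quadruple is attached to exactly the tables it supports). To keep this under control I would isolate the diagonal constraints first to fix congruence classes and upper bounds, treat together the symmetry exchanging the roles of $V_n$ and $V_m$ as well as tables that differ in a single entry, and then verify each of the few surviving quadruples directly against the full list of membership conditions for every table.
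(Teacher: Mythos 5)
Your proposal is correct and takes essentially the same approach as the paper: the paper likewise reduces the statement to a finite enumeration by combining \Cref{prop:symp_mult_tables}, the parity constraints of \Cref{thm:even_irreps} (written there as $n=2a$, $m=2b$, $p=2c$, $k=4d+2$, so that $\dim(\g)\le 25$ becomes $a+b+c+2d\le 8$), and the Clebsch--Gordan membership bounds --- with the skew-symmetric $\Lambda^2$ constraint on the diagonal brackets serving, exactly as you observe, as the main pruning device (e.g.\ $V_{4d+2}=[V_{2b},V_{2b}]$ forcing $b\ge 2$). The paper then carries out the table-by-table search you outline and collates the surviving quadruples by the set of tables each supports, which is precisely your final grouping step.
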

\begin{proof}
  Using the parity of $n,m,k,p$ found in \Cref{thm:even_irreps}, we write $n= 2a$, $m= 2b$, $p=2c$ and $k = 4d+2$. Recall that the possible multiplicative structure of $\h$ is one of the tables in \Cref{prop:symp_mult_tables}.
Notice that $\dim(\g)= 2(a+b+c+2d)+9\leq 25$ if and only if $a+b+c+2d\leq 8$.

We only give the proof for $d\geq 1$ as the case $d = 0$ is very similar.
By Clebsh-Gordan theorem, $V_{4d+2}  \subsetneq V_2\otimes V_2$. In each of the above tables we see that 
$V_{4d+2} = [V_{2a},V_{2c}]$ and hence $a \geq 2$ or $c\geq 2$. 
Similarly, in tables 1,3,4,5,6, 
\[ 
  V_{4d+2} = [V_{2b},V_{2b}]
\] and therefore $b\geq 2$. In particular, $a+b+c\geq 5$.
Further, in table 2
\[
  V_{4d+2} = [V_{2a},V_{2a}]\qquad\text{ and }\qquad V_{4d+2} = [V_{2b},V_{2c}]
  \] 
and so $a+b+c\geq 5$ in this case too. 
Since we require $a+b+c+2d\leq 8$ then $d= 1$ and hence 
\begin{equation}\label{eqn:bounds}
5\leq a + b + c \leq 6.
\end{equation}

In tables 1,4,6, $c\equiv 1\mod{2}$ since $[V_{2a},V_{2a}] = V_{2c}$. By similar arguments to before, $c\geq 3$ and $[V_{2a},V_{2a}] = V_{2c}$ imply $a\geq 2$ and $a+b+c\geq 7$. Hence $c=1$ so that \Cref{eqn:bounds} is satisfied. Since $[V_{2a},V_{2c} ] = V_6$, then $a\geq 2$.  Then we have $a,b\geq 2$, $c=1$ and $d=1$. In this case, the only possibilities for $\h$ are 
\begin{center}
\begin{tabular}{| c | c | c | c | c | c |}
  \hline
  \multicolumn{6}{|c|}{Multiplications 1,4,6, $d=1$} \\
  \hline
  $a$ & $b$ & $c$ & $d$ & $\h = V_{2a}\oplus V_{2b}\oplus V_{2c} \oplus V_{4d+2}$& $\dim(\g)$ \\ \hline
   $2$ & $2$ & $1$  &  $1$ & $V_4\oplus V_4 \oplus V_2\oplus V_6$ & $23$ \\ \hline
   $2$ & $3$ & $1$ & $1$  &  $V_4\oplus V_6 \oplus V_2\oplus V_6$ & $25$ \\ \hline
   $3$ &  $2$ & $1$ & $1$ &  $V_6\oplus V_4 \oplus V_2\oplus V_6$ & $25$ \\ \hline
\end{tabular} \end{center}

Using the same arguments than before in tables 3 and 5, we obtain $a,b\geq 2$.
Therefore, the $a,b,c$ that satisfy \Cref{eqn:bounds} are
\begin{center}
\begin{tabular}{| c | c | c | c | c | c |}
  \hline
  \multicolumn{6}{|c|}{Multiplications 3,5, $d=1$} \\
  \hline
  $a$ & $b$ & $c$ & $d$ & $\h = V_{2a}\oplus V_{2b}\oplus V_{2c} \oplus V_{4d+2}$& $\dim(\g)$ \\ \hline
   $2$ & $2$ & $1$  &  $1$ & $V_4\oplus V_4 \oplus V_2\oplus V_6$ & $23$ \\ \hline
   $2$ & $3$ & $1$ & $1$  &  $V_4\oplus V_6 \oplus V_2\oplus V_6$ & $25$ \\ \hline
   $3$ &  $2$ & $1$ & $1$ &  $V_6\oplus V_4 \oplus V_2\oplus V_6$ & $25$ \\ \hline
   $2$ &  $2$ & $2$ & $1$ &  $V_4\oplus V_4 \oplus V_4\oplus V_6$ & $25$ \\ \hline
\end{tabular} \end{center}

For Table 2, we previously showed that $a\geq 2$ and either $b\geq 2$ or $c\geq 2$. In this case, the possible $a,b,c,d$ are

\begin{center}\begin{tabular}{| c | c | c | c | c | c |}
  \hline
  \multicolumn{6}{|c|}{Multiplication 2, $d=1$} \\
  \hline
  $a$ & $b$ & $c$ & $d$ & $\h = V_{2a}\oplus V_{2b}\oplus V_{2c} \oplus V_{4d+2}$& $\dim(\g)$ \\ \hline
   $2$ & $2$ & $1$  &  $1$ & $V_4\oplus V_4 \oplus V_2\oplus V_6$ & $23$ \\ \hline
   $2$ & $3$ & $1$ & $1$  &  $V_4\oplus V_6 \oplus V_2\oplus V_6$ & $25$ \\ \hline
   $3$ &  $2$ & $1$ & $1$ &  $V_6\oplus V_4 \oplus V_2\oplus V_6$ & $25$ \\ \hline
   $2$ &  $2$ & $2$ & $1$ &  $V_4\oplus V_4 \oplus V_4\oplus V_6$ & $25$ \\ \hline
   $2$ &  $1$ & $2$ & $1$ &  $V_4\oplus V_2 \oplus V_4\oplus V_6$ & $23$ \\ \hline
   $3$ &  $1$ & $2$ & $1$ &  $V_6\oplus V_2 \oplus V_4\oplus V_6$ & $25$ \\ \hline
   $2$ &  $1$ & $3$ & $1$ &  $V_4\oplus V_2 \oplus V_6\oplus V_6$ & $25$ \\ \hline
\end{tabular} \end{center}

\end{proof}

\begin{Remark}
Notice that any $\s_2(\C)$-module $\h = V_n\oplus V_m\oplus V_p\oplus V_k$, with any choice of multiplicative structure in \Cref{prop:symp_mult_tables}, is an algebra but not necessarily a Lie algebra.
\end{Remark}

For the analysis of the remaining cases, we use \Cref{Lemma:family_no_Lie} to show that most of the cases from \Cref{thm:possible_cases_sl2} are not Lie algebras.
Indeed, given a fixed value of $(n,m,p,k)$, we consider the associated vector space $\h$ and make a choice of $\g_L$-equivariant morphisms between the simple modules of $\h$ which is compatible with the respective multiplication prescribed in \Cref{thm:possible_cases_sl2}. Recall from \Cref{rmk:equiv_morphisms} that in order to show that $\h$ with a prescribed multiplicative structure is not a Lie algebra, it is necessary to prove that 
after {\it any} non-zero rescaling of these morphisms, $\h$ does not satisfy the Jacobi identity.
 In other words, it is necessary to show that any algebra in the family resulting from the non trivial rescalings does not satisfy the Jacobi identity. The following Lemma gives us a criterion to show no element of the family is a Lie algebra.

\begin{Lemma}\label{Lemma:family_no_Lie}
  Let $\g_L$ be a simple Lie algebra and $\h=V_{n_1}\oplus \cdots\oplus V_{n_k}$ be a $\g_L$-module with $V_{n_j}$ simple. For every $1\leq i\leq j\leq k$, suppose that there is $1\leq p\leq k$ and a $\g_L$-equivariant morphism
  \[
    \Gamma_{i,j}^p :V_{n_i} \otimes V_{n_j} \to V_{n_p},
  \] such that $\Gamma_{i,i}^p$ is skew-symmetric. Consider the resulting algebra $\g = \g_L\ltimes \h$, which product $[,]$ is constructed from the $\Gamma_{i,j}^p$.  
  
  Suppose that 
  \begin{enumerate}
  \item\label{item:homogeneous_Jacobi} there exist $a,b,c\in \g$, such that 
  \[
    [a,[b,c]] = [b,[c, a]] = 0\qquad\text{and}\qquad[c,[a,b]] \neq 0,
  \] 
  \item\label{item:Jacobi_fails_nnm} or, there exist $1\leq i,j\leq k$ and $a,b \in V_{n_i}$ $c\in V_{n_j}$ such that 
  \[
    [a,[b,c]] + [b,[c, a]] \neq 0   \qquad\text{and}\qquad  [c,[a,b]]=0,
  \] 
  \item\label{item:not_subalgebra} or there exist 
  $1\leq i\leq k$ and $a,b,c \in V_{n_i}$ that do not satisfy the Jacobi identity.  
  \end{enumerate} Then any algebra $\tilde{\g}= \g_L\ltimes \h$ obtained from non-zero rescaling $\Gamma_{i,j}^p$ does not satisfy the Jacobi identity.
  \end{Lemma}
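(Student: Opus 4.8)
\medskip
The plan is to exploit the fact that, under the standing Convention that the bracket of any two simple summands is either zero or a single simple module, the Jacobi expression transforms \emph{homogeneously} under rescaling. First I would record the effect of a rescaling on iterated brackets. Writing $\tilde\Gamma_{i,j}^{p}=s_{i,j}\,\Gamma_{i,j}^{p}$ with $s_{i,j}\in\C\setminus\{0\}$ (the target $p$ being unique for each pair by the Convention), the product of $\tilde{\g}$ satisfies $[x,y]_{\tilde{\g}}=s_{i,j}\,[x,y]_{\g}$ for homogeneous $x\in V_{n_i}$, $y\in V_{n_j}$, where by skew-symmetry the same scalar governs $[y,x]$. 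Consequently, for homogeneous $x\in V_{n_a}$, $y\in V_{n_b}$, $z\in V_{n_c}$ one obtains
\[
[x,[y,z]]_{\tilde{\g}}=s_{a,\,p(b,c)}\,s_{b,c}\,[x,[y,z]]_{\g},
\]
where $p(b,c)$ is the unique target of $[V_{n_b},V_{n_c}]$; since the scalars are nonzero, this summand vanishes in $\tilde{\g}$ exactly when it vanishes in $\g$. By multilinearity it suffices to test the Jacobi identity of $\tilde{\g}$ on homogeneous triples, and in each hypothesis the witnessing elements are (or may be taken) homogeneous.

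\medskip
With this bookkeeping in place, I would dispatch the three cases. In case~(\ref{item:not_subalgebra}), where $a,b,c\in V_{n_i}$, the three inner brackets all use $\Gamma_{i,i}$ (weight $s_{i,i}$) and land in a common target $V_{n_q}$, and the three outer brackets all use $\Gamma_{i,q}$ (weight $s_{i,q}$); hence every cyclic summand carries the same weight and $J_{\tilde{\g}}(a,b,c)=s_{i,i}\,s_{i,q}\,J_{\g}(a,b,c)\neq 0$. In case~(\ref{item:Jacobi_fails_nnm}), with $a,b\in V_{n_i}$ and $c\in V_{n_j}$, the inner brackets $[b,c]$ and $[c,a]$ are governed by the \emph{same} pairwise morphism $\Gamma_{i,j}$ (using $[c,a]=-[a,c]$) and so share target and weight $s_{i,j}$, while the outer brackets $[a,-]$ and $[b,-]$ share the weight $s_{i,p(i,j)}$; thus
\[
J_{\tilde{\g}}(a,b,c)=s_{i,j}\,s_{i,p(i,j)}\bigl([a,[b,c]]_{\g}+[b,[c,a]]_{\g}\bigr)+w\,[c,[a,b]]_{\g},
\]
and the last summand vanishes in $\g$ by hypothesis, leaving a nonzero multiple of a nonzero element. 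In case~(\ref{item:homogeneous_Jacobi}), taking $a,b,c$ homogeneous, two of the cyclic summands already vanish in $\g$ (hence in $\tilde{\g}$), while $[c,[a,b]]_{\g}\neq 0$ forces the relevant inner and outer brackets to be nonzero, so its weight is a genuine nonzero scalar and $J_{\tilde{\g}}(a,b,c)$ equals this nonzero weight times $[c,[a,b]]_{\g}\neq 0$.

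\medskip
The main obstacle, and the only place where the Convention genuinely does the work, is establishing the \emph{equal-weight} phenomenon: that the surviving summands of the Jacobi expression carry a common nonzero scalar, so that no choice of the $s_{i,j}$ can engineer a cancellation. This is immediate from the uniqueness of the target module together with the skew-symmetry of $\Gamma_{i,i}$; were $[V_{n_i},V_{n_j}]$ allowed to decompose into several simple modules, an iterated bracket would split into components scaling by distinct products of the $s_{i,j}$, and one could conceivably tune the scalars to annihilate $J_{\tilde{\g}}$. I would therefore verify carefully in cases~(\ref{item:Jacobi_fails_nnm}) and~(\ref{item:not_subalgebra}) that the inner brackets are nonzero (otherwise the asserted nonzero sum would be impossible), ensuring the common weight is nonzero, and conclude that $\tilde{\g}$ violates the Jacobi identity in every case.
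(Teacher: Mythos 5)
Your proof is correct and follows essentially the same route as the paper's: the vanishing conditions are homogeneous in the $\Gamma_{i,j}^p$ and hence persist under nonzero rescaling, while in case~(2) the two surviving summands carry the common factor $s_{i,j}\,s_{i,p(i,j)}$ because $[b,c]$ and $[c,a]=-[a,c]$ are governed by the same morphism $\Gamma_{i,j}$ --- precisely the factorisation $\Gamma^s_{n,r}\bigl(a\otimes\Gamma^r_{n,m}(b,c)-b\otimes\Gamma^r_{n,m}(a,c)\bigr)$ the paper uses. Your explicit scalar bookkeeping, and your remark that the unique-target hypothesis is what rules out engineered cancellations, make the write-up somewhat more detailed than the paper's own proof, which dispatches cases~(1) and~(3) as ``straightforward'' and ``very similar.''
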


  \begin{proof}
  The proof of \Cref{item:homogeneous_Jacobi} is straightforward.
  Let $i,j$ and $a,b,c$ as in \Cref{item:Jacobi_fails_nnm}. The equation
  \[
  0 = [c,[a,b]] = \Gamma^q_{p,m}(c\otimes\Gamma^p_{n,n}(a,b))
  \] is homogeneous and therefore satisfied by any rescaling of $\Gamma^q_{p,m},\Gamma^p_{n,n}$. Similarly, by definition of $[\ ,\ ]$ we have that 
  \[  \begin{aligned}
   \null [a,[b,c]] + [b,[c, a]] &= \Gamma^s_{n,r}(a\otimes\Gamma^r_{n,m}(b,c)) -  \Gamma^s_{n,r}(b\otimes\Gamma^r_{n,m}(a,c))  \\
    &= \Gamma^s_{n,r}\left(a\otimes\Gamma^r_{n,m}(b,c) - b\otimes\Gamma^r_{n,m}(a,c)\right),
  \end{aligned}
    \] is also homogeneous with respect to $\Gamma^s_{n,r},\Gamma^r_{n,m}$ and therefore non-zero when rescaling.

  The proof of \Cref{item:not_subalgebra} is very similar.
  \end{proof}

Applying the previous lemma to each of the cases listed in \Cref{thm:possible_cases_sl2} we obtain the following.
\begin{Cor} 
Suppose $\g = \s_2(\C)\ltimes \h$ is an algebra where $\h = V_n\oplus V_m\oplus V_p\oplus V_k$ is one of the algebras listed in the conclusion of \Cref{thm:possible_cases_sl2}. If $\g$ is a Lie algebra then one of the following cases must necessarily hold
\begin{enumerate}
  \item the algebra structure of $\h$ is given by table 5 and $(n,m,p,k)=(2, 6, 4, 2),$ or,
  \item the algebra structure of $\h$ is given by table 6 and $(n,m,k,p)=(6, 4, 6, 2),(2, 2, 2, 2)$ or $(2, 4, 2, 2).$ 
\end{enumerate}
\end{Cor}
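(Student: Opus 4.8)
The plan is to eliminate, one candidate at a time, every pair consisting of a table from \Cref{prop:symp_mult_tables} and a tuple $(n,m,p,k)$ listed in \Cref{thm:possible_cases_sl2}, except the four that appear in the statement, by exhibiting in each remaining case an explicit Jacobi obstruction of one of the three types provided by \Cref{Lemma:family_no_Lie}. The reason for invoking \Cref{Lemma:family_no_Lie} rather than testing the Jacobi identity for a single bracket is that, for a fixed tuple and table, the bracket is not unique. By \Cref{rmk:equiv_morphisms}, each nonzero product $[V_a,V_b]=V_c$ prescribed by a table is realised by an $\s_2(\C)$-equivariant morphism $\Gamma^c_{a,b}\colon V_a\otimes V_b\to V_c$, and this morphism is pinned down only up to a nonzero scalar. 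By \Cref{thm:Clebsh-Gordan} and \Cref{cor:Clebsh-Gordan_sym_ext}, in all the tuples under consideration the target $V_c$ occurs with multiplicity one in $V_a\otimes V_b$ (respectively in $\Lambda^2 V_a$ when $a=b$), so $\dim\Hom_{\s_2(\C)}(V_a\otimes V_b,V_c)=1$ and the whole family of candidate algebras is parametrised by one nonzero scalar per nonzero entry of the table. The content of \Cref{Lemma:family_no_Lie} is precisely that a single scale-invariant obstruction kills this entire family at once.

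Concretely, for each candidate I would first write down the relevant Clebsch-Gordan maps on the weight basis of \Cref{prop:explicit_irreps}: using $\rho(H)e_i=(n-2i)e_i$, $\rho(F)e_i=e_{i+1}$ and $\rho(E)e_i=i(n+1-i)e_{i-1}$, the unique (up to scalar) equivariant projection $V_a\otimes V_b\to V_c$ is determined by its value on the highest-weight vector of $V_c$ and is propagated to lower weights by applying $\rho(F)$. This renders every bracket $[e_i,e_j]$ an explicit scalar multiple of a single basis vector $g_{l(i,j)}$ with $l(i,j)=i+j-\tfrac12(a+b-c)$, exactly as in \Cref{prop:der_semisimples_sl2}. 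With these formulas in hand I would then evaluate the Jacobiator $[a,[b,c]]+[b,[c,a]]+[c,[a,b]]$ on carefully chosen low-weight vectors and exhibit one of the following: a triple for which two of the three summands vanish while the third does not (the first criterion of \Cref{Lemma:family_no_Lie}); a pair $a,b$ in a common module $V_{n_i}$ and a $c$ in another module for which the first two summands fail to cancel while the third vanishes (the second criterion); or a failure of the Jacobi identity already inside the subalgebra generated by a single module (the third criterion). Since each of these expressions is homogeneous in the rescaling scalars, its non-vanishing for one representative choice of the $\Gamma^c_{a,b}$ certifies non-vanishing for every nonzero rescaling, so the corresponding tuple and table admit no Lie algebra structure.

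The conceptual content here is light, as every step reduces to the scale-invariance packaged in \Cref{Lemma:family_no_Lie}; the main obstacle is instead the volume and accuracy of the bookkeeping. There are on the order of a hundred (table, tuple) combinations to treat, and for each one must compute several Clebsch-Gordan coefficients correctly and locate a weight triple that triggers one of the three criteria. The reliable way to carry this out is to mechanise it: implement the representations of \Cref{prop:explicit_irreps}, the projections onto each $V_c\subset V_a\otimes V_b$, and the bracket determined by a prescribed table, and then test the three conditions of \Cref{Lemma:family_no_Lie} over a basis of weight vectors. This is exactly the role of the algorithms described in Section 4; running them on each candidate leaves, as the only combinations for which no such obstruction exists, table 5 with $(n,m,p,k)=(2,6,4,2)$ and table 6 with $(n,m,p,k)\in\{(6,4,6,2),(2,2,2,2),(2,4,2,2)\}$, which is the assertion of the corollary.
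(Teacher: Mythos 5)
Your proposal is correct and coincides with the paper's own treatment: the paper likewise eliminates each (table, tuple) candidate from \Cref{thm:possible_cases_sl2} by applying \Cref{Lemma:family_no_Lie} to explicit $\s_2(\C)$-equivariant brackets built on the weight basis of \Cref{prop:explicit_irreps}, with the case-by-case verification mechanised by the Section 4 algorithms. Your multiplicity-one observation (via \Cref{thm:Clebsh-Gordan} and \Cref{cor:Clebsh-Gordan_sym_ext}), which reduces the family to one nonzero scalar per table entry, is exactly the scale-invariance that the paper's lemma exploits, so the arguments are essentially identical.
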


\begin{Prop}
  Suppose $\g = \s_2(\C)\ltimes \h$ is an algebra where $\h = V_n\oplus V_m\oplus V_p\oplus V_k$ is one of the algebras listed in the conclusion of \Cref{thm:possible_cases_sl2}. The following algebras admit outer derivations.
  \begin{enumerate}
    \item the multiplication of $\h$ is given by table 6 and $(n,m,p,k) = (2, 2, 2, 2),(2, 4, 2, 2),$
    \item the  multiplication of $\h$ is given by table 5 and $(n,m,p,k)=(2, 6, 4, 2)$.
    \end{enumerate}
\end{Prop}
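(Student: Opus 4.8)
The plan is to exhibit, for each of the three algebras, a single nonzero outer derivation, so that none of them can be sympathetic.

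First I would reduce to producing an equivariant derivation that vanishes on the Levi factor. By \Cref{prop:Levi_bracket_symp}, none of $V_n,V_m,V_p,V_k$ is trivial (every weight is at least $2$), so $\h$ has no trivial $\s_2(\C)$-submodule. Arguing as in the proof of \Cref{prop:der_semisimples}, if $D=\ad_{\g}(s+h)$ is inner and $D\vert_{\s_2(\C)}=0$, then $s\in Z(\s_2(\C))=0$ and $h$ is $\s_2(\C)$-invariant, hence $h=0$; thus the only inner derivation of $\g$ vanishing on $\s_2(\C)$ is $0$. Consequently it suffices to produce, in each case, a nonzero $\s_2(\C)$-equivariant linear map $D\colon\h\to\h$, extended by $D\vert_{\s_2(\C)}\equiv 0$, which satisfies the Leibniz rule $D[x,y]=[Dx,y]+[x,Dy]$ for all $x,y\in\h$: any such $D$ is automatically an outer derivation of $\g$.

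The derivations of \Cref{prop:der_semisimples}, which act as a scalar $\lambda_a\Id$ on each summand $V_a$, do not suffice here. Indeed, by \Cref{prop:der_semisimples_sl2} such a derivation satisfies $D\vert_{\s_2(\C)}=\lambda\,\ad(H)$, and after subtracting the inner derivation $\lambda\,\ad_{\g}(H)$ we may assume $\lambda=0$; the weight relations then reduce to the homogeneous system $\lambda_a+\lambda_b-\lambda_c=0$ (resp.\ $2\lambda_a-\lambda_c=0$) over the nonzero products $[V_a,V_b]=V_c$ (resp.\ $[V_a,V_a]=V_c$) read off the relevant table of \Cref{prop:symp_mult_tables}, and for each of the three tuples this system has only the trivial solution. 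The extra room comes from repeated isomorphic summands: for $(2,2,2,2)$ all four factors are isomorphic to $V_2$, for $(2,4,2,2)$ the three weight-$2$ factors are, and for $(2,6,4,2)$ the two weight-$2$ factors are. Decomposing $\h=\bigoplus_\lambda V_\lambda\otimes M_\lambda$ into isotypic components, Schur's Lemma \ref{thm:Schur} identifies the equivariant maps $D$ with $D\vert_{\s_2(\C)}=0$ with $\bigoplus_\lambda\End(M_\lambda)$, that is, with a choice of endomorphism of each multiplicity space. Since each bracket is, up to the scalar pinned down by the Jacobi identity in the preceding corollary, the unique equivariant morphism $V_a\otimes V_b\to V_c$, the Leibniz rule becomes a finite homogeneous linear system in the entries of these endomorphisms, and the goal is to find a nonzero solution.

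I would handle $(2,2,2,2)$ as the model case. Here $\h=V_2\otimes\C^4$, and since $V_2\cong\s_2(\C)$ as an $\s_2(\C)$-module, the unique equivariant product $V_2\otimes V_2\to V_2$ is proportional to the bracket of $\s_2(\C)$; in particular it is skew-symmetric because $V_2\subset\Lambda^2 V_2$ by \Cref{cor:Clebsh-Gordan_sym_ext}. A map $D=\Id_{V_2}\otimes A$ is then a derivation of $\h$ if and only if $A$ is a derivation of the $4$-dimensional multiplicity algebra $(\C^4,B)$, whose commutative product is read off the table. Using the Jacobi identity of $\s_2(\C)$ one checks that the Jacobi identity of $\h$ forces $(\C^4,B)$ to be commutative, associative and nilpotent; such an algebra has a large derivation algebra, and an explicit nonzero $A$ (for example the map carrying $V_n$ isomorphically onto $V_p$ and $V_p$ into $V_k$, and annihilating $V_m$ and $V_k$) produces the desired outer derivation. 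For $(2,4,2,2)$ and $(2,6,4,2)$ the $V_2$-isotypic component carries an analogous multiplicity algebra, now coupled through the mixed brackets to the single $V_4$ factor, respectively the $V_4$ and $V_6$ factors; solving the resulting small linear system again yields a nonzero equivariant derivation. Each such solution can be written down and verified by hand, or produced directly with the algorithm of Section~4.

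I expect the main obstacle to be the bookkeeping of structure constants rather than any conceptual difficulty. Whether the multiplicity-mixing solutions are nonzero, or are forced to vanish, depends on the precise scalars that survive the Jacobi identity from \Cref{thm:posibles_mult} and the preceding corollary, so the substantive step is to carry the correct constants through the Leibniz system and confirm that a nonzero equivariant derivation persists in each of the three cases. Once such a $D$ is exhibited, that it is a derivation is a finite linear check and that it is outer is immediate from the first paragraph.
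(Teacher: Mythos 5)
Your proposal is correct in substance, but note that the paper contains no prose proof of this proposition at all: it is one of the statements delegated to the algorithms of Section~4 (compute $\Der(\g)$ as the solution space of the homogeneous linear system of \Cref{sec:codigo_derivaciones} and compare with $\ad(\g)$), so your argument is a genuinely different, machine-free route. Your opening reduction is valid: in all three tuples every weight is at least $2$, so $\h$ contains no trivial $\s_2(\C)$-summand, the only inner derivation vanishing on the Levi factor is $0$ (as in \Cref{prop:der_semisimples}, $s=0$, and then $[h,y]=0$ for all $y\in\s_2(\C)$ forces $h=0$), and for a map vanishing on $\s_2(\C)$ the Leibniz rule on mixed pairs in $\g_L\times\h$ is precisely equivariance; hence any nonzero equivariant Leibniz map on $\h$ is an outer derivation. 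Your Schur/multiplicity-space mechanism then works, and your explicit candidate for $(2,2,2,2)$ does satisfy Leibniz after tuning one scalar against the structure constants. Two comments. First, the sentence in which you subtract $\lambda\,\ad_{\g}(H)$ is garbled --- after the subtraction the derivation no longer acts by scalars on the summands --- but that passage is only motivation, and its conclusion (the scalar system $\lambda_a+\lambda_b=\lambda_c$ over the nonzero products has only the trivial solution in all three cases) is correct; likewise your claim that Jacobi forces the multiplicity algebra to be associative is true but needs an argument (the three cyclic compositions of the equivariant product span a two-dimensional subspace of the three-dimensional space $\Hom_{\s_2(\C)}(V_2\otimes V_2\otimes V_2,V_2)$). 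Second, and more usefully, the structure-constant bookkeeping you defer for $(2,4,2,2)$ and $(2,6,4,2)$ can be eliminated altogether: in all three tuples $n=k=2$, so $V_n\cong V_k$, and the equivariant map $D$ carrying $V_n$ isomorphically onto $V_k=Z(\h)$ and annihilating $V_m\oplus V_p\oplus V_k$ is a derivation for free, since $D([\h,\h])\subseteq D(V_p\oplus V_k)=0$ while $[Dx,y]+[x,Dy]\in[Z(\h),\h]=0$; by your reduction it is outer. This single witness settles all three cases uniformly and explains the dichotomy with the surviving candidate $(6,4,6,2)$, where $n\neq k$ blocks the construction. What your route buys over the paper's computational certificate is exactly this structural explanation with human-checkable witnesses; what the paper's computation buys is the full dimension of $\Der(\g)$, which your argument does not attempt.
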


\begin{Theorem}\label{thm:main_is_sl2}
Let $\g=\g_L\ltimes \h$ be a sympathetic, non semisimple Lie algebra, with simple Levi subalgebra $\g_L$ and $\h = V_n \oplus V_m \oplus V_p \oplus V_k$, as a $\g_L$-module. Then $dim(\g) \geq 25$. Moreover, $\dim(\g) = 25$ if and only if $(n,m,p,k) = (6,4,6,2)$ and $\g_L = \s_2(\C)$.
\end{Theorem}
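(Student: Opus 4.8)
The plan is to combine the structural reduction of \Cref{prop:symp_mult_tables} with a case split on the Levi subalgebra. By \Cref{prop:symp_mult_tables}, since $\g$ is sympathetic with $s(\g)=4$, after relabelling we may assume $[\h,\h]=V_p\oplus V_k$, $Z(\h)=V_k$, and that the bracket of $\h$ is one of the six multiplication tables listed there. I would then prove the lower bound $\dim_{\C}\g\geq 25$ and the characterization of equality by treating $\g_L=\s_2(\C)$ and $\g_L\neq\s_2(\C)$ separately, since only the former can realise dimension exactly $25$.

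First I would dispose of the case $\g_L=\s_2(\C)$, where the candidates are already finite. By \Cref{thm:possible_cases_sl2}, any sympathetic $\g$ of this form with $\dim_{\C}\g\leq 25$ has $(n,m,p,k)$ in the explicit list given there. The Corollary stated just before this theorem (obtained via \Cref{Lemma:family_no_Lie}) removes every tuple that fails the Jacobi identity, leaving only table 5 with $(2,6,4,2)$ and table 6 with $(6,4,6,2)$, $(2,2,2,2)$ or $(2,4,2,2)$. The Proposition stated just before this theorem exhibits outer derivations for the first three of these, so they are not sympathetic. The only survivor is table 6 with $(n,m,p,k)=(6,4,6,2)$, whose dimension is $\dim_{\C}\s_2(\C)+\dim_{\C}\h=3+(7+5+7+3)=25$. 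Since this shows that no sympathetic algebra with $\g_L=\s_2(\C)$ and $\dim_{\C}\g\leq 25$ other than this one exists, every sympathetic $\g$ with $\g_L=\s_2(\C)$ satisfies $\dim_{\C}\g\geq 25$, with equality precisely for $(6,4,6,2)$. That this algebra is genuinely sympathetic, i.e.\ has no outer derivations at all and not merely none of the semisimple form produced above, is exactly the $25$-dimensional example of \cite{B}, which can also be confirmed directly with the algorithms of Section 4.

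Next I would show that $\g_L\neq\s_2(\C)$ forces $\dim_{\C}\g>25$, so that equality is impossible in this case. If $\h$ contains no trivial $\g_L$-module, this is immediate from \Cref{thm:smallest_symp_sl_2}. If $\h$ does contain a trivial module, recall that by \Cref{prop:Levi_bracket_symp} a summand is trivial if and only if it is one-dimensional, so its self-bracket vanishes by skew-symmetry. In every one of the six tables $[V_n,V_n]\neq 0$, whence $V_n$ is non-trivial; and since $\g$ is perfect, every trivial summand lies in $[\h,\h]=V_p\oplus V_k$, so the presence of a trivial summand forces $V_p$ or $V_k$ to be trivial as a module. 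Feeding a trivial $V_p$ or $V_k$ into the table relations, Schur's Lemma \ref{thm:Schur} forces isomorphisms among the surviving summands (for instance $[V_n,V_p]=V_k$ with $V_p$ trivial gives $V_k\cong V_n$), and confronting these with the available tensor-product decompositions for the relevant small $\g_L$ shows each configuration is either contradictory or forces the non-trivial summands to be large enough that $\dim_{\C}\g>25$. Combining the two sub-cases gives $\dim_{\C}\g>25$ whenever $\g_L\neq\s_2(\C)$.

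I expect the main obstacle to be the trivial-module bookkeeping in the case $\g_L\neq\s_2(\C)$: one must verify that every way of inserting a one-dimensional summand into the six tables is either incompatible with Schur's Lemma or else inflates the non-trivial summands past the dimension budget, and this requires knowing, for each relevant low-rank simple $\g_L$, that it has no small self-dual representation embedding into its own exterior square (in particular no small symplectic one). A secondary but essential point is the existence half of the equality statement: confirming that the unique combinatorial survivor, table 6 with $(6,4,6,2)$ over $\s_2(\C)$, actually admits no outer derivations whatsoever, which is where the explicit verification of \cite{B}, or the algorithms of Section 4, is needed.
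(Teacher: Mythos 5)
Your proposal is correct, and for the case $\g_L=\s_2(\C)$ it is in effect the paper's own proof: the paper states \Cref{thm:main_is_sl2} without a separate argument, as the assembly of \Cref{prop:symp_mult_tables}, \Cref{thm:possible_cases_sl2}, the corollary derived from \Cref{Lemma:family_no_Lie} (which eliminates all tuples failing Jacobi), the proposition exhibiting outer derivations for table 6 with $(2,2,2,2)$, $(2,4,2,2)$ and table 5 with $(2,6,4,2)$, and the verification via Section 4 that the unique survivor, table 6 with $(6,4,6,2)$ of dimension $3+7+5+7+3=25$, is Benayadi's sympathetic algebra; you also correctly isolate the point that the chain of eliminations only rules out \emph{semisimple} outer derivations, so the survivor needs the full $\Der(\g)=\ad(\g)$ check.

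Where you genuinely go beyond the paper is the case $\g_L\neq\s_2(\C)$ with a trivial summand: \Cref{thm:smallest_symp_sl_2} carries the hypothesis that $\h$ contains no trivial $\g_L$-modules, whereas \Cref{thm:main_is_sl2} does not, and the paper nowhere discharges this hypothesis. Your patch is the right one and can be tightened slightly: perfectness forces any trivial summand into $[\h,\h]=V_p\oplus V_k$ (so $V_n,V_m$ are automatically non-trivial, making your separate appeal to $[V_n,V_n]\neq 0$ redundant); a trivial $V_k$ is killed more directly than by Schur bookkeeping, since $V_k=Z(\h)$ trivial gives $0\neq V_k\subseteq Z(\g)$, contradicting sympathy; and a trivial $V_p$ forces, via Schur, $V_k\cong V_n$ together with either an invariant symplectic form on $V_n$ (tables with $[V_n,V_n]=V_p$) or $V_m\cong V_n^{*}$ and $V_n\subseteq\Lambda^2 V_n$ (tables with $[V_n,V_m]=V_p$, $[V_n,V_n]=V_k$). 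For $\s_4(\C)$, $\so(5)$ and all larger algebras the dimension budget alone then exceeds $25$, and for $\s_3(\C)$ one checks that no module of dimension $\leq 6$ is self-dual (duality swaps $V_{(a_1,a_2)}$ and $V_{(a_2,a_1)}$) and that the $8$-dimensional adjoint is orthogonal rather than symplectic and too large anyway. You leave this finite verification as a sketch, so a complete write-up would still enumerate these few configurations, but the structure of your argument is sound and it closes a case the paper itself passes over silently.
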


\begin{Theorem}
 Let $\g=\g_L\ltimes \h$ be a sympathetic, non semisimple Lie algebra, with simple Levi subalgebra $\g_L$. Then $\dim(\g) \geq 15$.
\end{Theorem}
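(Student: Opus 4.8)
The plan is to reduce the statement to one genuinely hard case, $\g_L=\s_2(\C)$ with $s(\g)\ge 5$, and to dispatch everything else by dimension counting. First I would collect the reductions already proved: by \Cref{cor:1_2_repr} and \Cref{thm:3_repr} a sympathetic algebra satisfies $s(\g)\ge 4$ (this is \Cref{thm:symp_at_least_4_repr}), and by \Cref{thm:main_is_sl2} every sympathetic $\g$ with $s(\g)=4$ already has $\dim\g\ge 25\ge 15$. So it suffices to prove $\dim\g\ge 15$ under the standing hypothesis $s(\g)\ge 5$, writing $\dim\g=\dim\g_L+\dim\h$.

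Next I would handle the large Levi factors crudely. Because $\g$ is non-semisimple we have $\h\neq 0$, so if $\dim\g_L\ge 14$ (that is, $\g_L$ is $G_2$ or larger) then $\dim\g\ge 14+1=15$ at once. The only simple Lie algebras of dimension below $14$ are $\s_2(\C)$, $\s_3(\C)$ and $\so(5)$, of dimensions $3,8,10$. The structural fact I would use throughout is that $Z(\h)$ contains no trivial module: if a trivial module $V_0=\langle v\rangle$ lay in $Z(\h)$, then $[v,\g_L]=0$ and $[v,\h]=0$ would force $v\in Z(\g)=0$. Since $\h$ is nilpotent, $Z(\h)\neq 0$, so $\h$ carries at least one non-trivial module, of dimension at least $4$ for $\so(5)$ and at least $3$ for $\s_3(\C)$. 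Bounding the remaining $s(\g)-1\ge 4$ modules below by $1$ gives $\dim\h\ge (s(\g)-1)+4\ge 8$ for $\so(5)$, hence $\dim\g\ge 18$, and $\dim\h\ge (s(\g)-1)+3\ge 7$ for $\s_3(\C)$, hence $\dim\g\ge 15$.

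The remaining obstacle is $\g_L=\s_2(\C)$ with $s(\g)\ge 5$, where the same crude count gives only $\dim\h\ge (s(\g)-1)+2\ge 6$ and $\dim\g\ge 9$; the task is to improve this to $\dim\h\ge 12$. Here I would combine three rigidity constraints. The $\Z_2$-grading $\h=\h^0\oplus\h^1$ of \Cref{lem:Z_2_graded}, with \Cref{lem:impares} and \Cref{cor:even_Z}, forces $\h^0\neq 0$ and $\h^0\not\subset Z(\h)$; perfectness forces every trivial module to lie in $[\h,\h]$ (the non-trivial modules being already accounted for by $[\g_L,\h]$), and \Cref{prop:derived_ideal_in_Z} forbids $[\h,\h]\subset Z(\h)$. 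Most importantly, \Cref{prop:der_semisimples} together with \Cref{prop:der_semisimples_sl2} show that if $\g$ is sympathetic then the homogeneous linear system consisting of one equation $\mu_i+\mu_j=\mu_l$ for each non-zero bracket $[V_{n_i},V_{n_j}]=V_{n_l}$ (where $\mu$ records the scalar by which a semisimple derivation vanishing on $\g_L$ acts on each module) must have only the solution $\mu=0$, since any non-zero solution is an outer semisimple derivation. I would then show that every admissible $\s_2(\C)$-configuration with $\dim\h\le 11$ fails one of these conditions: it is not perfect, or it places a trivial module in $Z(\h)$, or its weight system has a non-zero solution and hence an outer derivation. The parity restrictions of \Cref{lem:Z_2_graded} and \Cref{thm:even_irreps}, together with the Clebsch--Gordan decomposition governing which brackets can be non-zero, cut this down to a finite verification, yielding $\dim\h\ge 12$ and $\dim\g\ge 15$.

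The hardest part will be exactly this last finite analysis: unlike the four-module case there is no short list of multiplication tables to invoke, so one must argue uniformly that the weight system is degenerate --- equivalently, that its incidence matrix has rank strictly less than $s(\g)$, producing a non-zero $\mu$ --- whenever the modules are simultaneously too small and too numerous. I expect the cleanest route is to prove that making the weight system non-degenerate (so that no outer derivation exists) already forces several modules of dimension at least $3$ into $\h$, thereby pushing $\dim\h$ to at least $12$; checking this rigidity bound over the finitely many surviving configurations is precisely where the explicit computations and the algorithms of Section 4 are needed.
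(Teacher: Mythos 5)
Your reductions are correct as far as they go, and in places more careful than the paper itself: the paper dispatches $\g_L\neq\s_2(\C)$ by citing \Cref{thm:smallest_symp_sl_2}, whose statement formally assumes $\h$ contains no trivial $\g_L$-modules, whereas your observation that $Z(\h)\neq 0$ can contain no trivial summand (else $Z(\g)\neq 0$) lets you run the dimension count for $\s_3(\C)$ and $\so(5)$ without that hypothesis. Your reformulation of \Cref{prop:der_semisimples_sl2} with $\lambda=0$ as a homogeneous system $\mu_i+\mu_j=\mu_l$, whose non-zero solutions are outer derivations by \Cref{prop:der_semisimples}, is also a valid mechanism and is exactly how the paper's \Cref{cor:perfecta_mult_der_ext} and \Cref{cor:perfecta_mult_der_ext_2} operate.

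The genuine gap is the case you yourself flag as hardest: $\g_L=\s_2(\C)$ with $s(\g)\geq 5$. There you never prove $\dim\h\geq 12$; you propose enumerating all configurations with $\dim\h\leq 11$ and assert that each must fail perfectness, centre-triviality, or non-degeneracy of the weight system, but the pivotal rigidity claim --- that non-degeneracy of the weight system forces enough modules of dimension at least $3$ --- is left explicitly conjectural (``I expect the cleanest route is to prove\dots''), and no enumeration is carried out. Unlike the $s(\g)=4$ case, where \Cref{prop:symp_mult_tables} supplies a short list of tables, here the set of possible bracket structures is not classified, so ``finite verification'' is a programme, not a proof. The paper closes this case with a two-line soft argument that makes your enumeration unnecessary: by \Cref{lem:impares} at least one $n_i>1$, and in fact at least two, since otherwise the $\Z_2$-grading of \Cref{lem:Z_2_graded} forces either $\h^0=Z(\h)$, contradicting \Cref{cor:even_Z}, or the existence of a semisimple outer derivation of the type in \Cref{prop:der_semisimples}. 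With $k\geq 5$, two weights at least $2$, and the remaining weights at least $1$, one gets
\[
\dim(\g)=3+k+\sum_{i=1}^{k}n_i\;\geq\;8+(2+2+1+1+1)=15
\]
directly. (Note that this count, like the paper's, tacitly uses that the remaining summands are non-trivial; your remark that trivial modules cannot lie in $Z(\h)$ and must lie in $[\h,\h]$ points at, but does not by itself close, that residual issue.) In short: your skeleton matches the paper's, but the decisive $\s_2(\C)$, $s(\g)\geq 5$ step is missing, and the missing step admits a short structural argument rather than the computational campaign you defer to Section 4, which the paper reserves for the $s(\g)=4$ analysis.
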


\begin{proof}
The case $\g_L\neq \s_2(\C)$ is an immediate consequence of \Cref{thm:smallest_symp_sl_2}.

Suppose $\g_L=\s_2(\C)$ and set $\h = V_{n_1}\oplus\ldots\oplus V_{n_k}$. By \Cref{thm:symp_at_least_4_repr}, $s(\g)=k\geq 4$. \Cref{thm:main_is_sl2} gives us the  bound for the case $s(\g) =4$. Assume that $s(\g) = k \geq 5$. Since $\g$ is sympathetic, \Cref{lem:impares} implies that $n_i > 1$ for at least one $i = 1,\ldots,k$. In fact, there exist at least two $i,j$ such that $n_i,n_j>1$. Otherwise, using the notation from \Cref{lem:Z_2_graded}, either $\h^0 = Z(\h)$ or $\g$ admits an outer derivation similar to those we constructed before. In consequence, we get the required bound
\[
\dim(\g)  = 8 + \sum_{i=1}^k{n_i}\geq 15.  
\]  
\end{proof}

\medskip
\begin{Theorem}
Let $\g$ be a perfect Lie algebra with Levi decomposition $\g= \s_2(\C) \ltimes \h$ and $Z(\g)=0$. Suppose that $\h=V_n \oplus V_m \oplus V_p \oplus V_k$, $Z(\h) =V_k$, as $\s_2(\C)$-modules, where $n,m,p,k\in\N$. If $\dim(\g) < 25$ then $\g$ is not a sympathetic Lie algebra.
\end{Theorem}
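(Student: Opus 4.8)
The plan is to argue by contradiction: I would assume that $\g$ is sympathetic and push the hypotheses through the structural results of this subsection until every surviving candidate is shown to possess an outer derivation. Since $\h=V_n\oplus V_m\oplus V_p\oplus V_k$ is a nonzero nilpotent radical, $\g$ is non-semisimple and $s(\g)=4$; being sympathetic it is moreover perfect, has trivial centre, and admits no outer derivations. Under these assumptions \Cref{prop:symp_mult_tables} applies, forcing the bracket on $\h$ to coincide, up to isomorphism, with one of the six multiplication tables listed there, with $[\h,\h]=V_p\oplus V_k$ and $Z(\h)=V_k$.

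Next I would feed in the dimension constraint. Writing $\dim(\g)=n+m+p+k+7$, the hypothesis $\dim(\g)<25$ (in particular $\dim(\g)\le 25$) lets me invoke \Cref{thm:possible_cases_sl2}, which confines $(n,m,p,k)$ to an explicit finite list. Because $\g$ is a genuine Lie algebra, I would then apply the Jacobi-identity obstruction of \Cref{Lemma:family_no_Lie}, through the Corollary that follows it, to delete from this list every tuple that fails the Jacobi identity after any non-zero rescaling of the defining equivariant morphisms. Only four tuples survive as actual Lie algebras; of these, exactly one has dimension $25$, and the strict inequality $\dim(\g)<25$ removes it, leaving the three low-dimensional candidates (table~5 with $(n,m,p,k)=(2,6,4,2)$, and table~6 with $(2,2,2,2)$ and $(2,4,2,2)$).

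To conclude, I would apply the Proposition preceding \Cref{thm:main_is_sl2}, which produces an explicit semisimple outer derivation for each of these three remaining algebras. Since a sympathetic Lie algebra has no outer derivations, none of them can be sympathetic, contradicting our assumption; hence $\g$ is not sympathetic. Equivalently, the statement is simply the $\dim(\g)<25$ instance of \Cref{thm:main_is_sl2}. I expect the only delicate point to be the bookkeeping in the middle step: one must verify that the strict inequality eliminates precisely the borderline tuple of dimension $25$ (namely $(6,4,6,2)$, which is genuinely sympathetic) and that the outer derivations supplied cover all three surviving cases, so that no sympathetic algebra of dimension below $25$ can escape the argument.
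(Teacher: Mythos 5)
Your proposal is correct and follows exactly the chain the paper intends: the theorem is stated there without an explicit proof, as the immediate assembly of \Cref{prop:symp_mult_tables}, \Cref{thm:possible_cases_sl2}, the corollary following \Cref{Lemma:family_no_Lie}, and the proposition supplying outer derivations for the three surviving tuples of dimension below $25$. Your bookkeeping (including $\dim(\g)=n+m+p+k+7$ and the elimination of the borderline tuple $(6,4,6,2)$ by the strict inequality) matches the paper's results, so nothing is missing.
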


\section{Appendix: Algorithmic Construction and Verification of Sympathetic Lie Algebras}

In this section we include the algorithms that we use to construct and verify particular cases of Lie algebras with Levi subalgebra $\g_L = \s_2(\C)$. Our Python implementation of these algorithms can be found in \url{https://github.com/winsy17/Sympathetic_Lie_Algebras}.

\subsection{Algorithm to construct simple $\s_2(\C)$-submodules of $V_n\otimes V_m$}\label{sec:codigo_generar_V_k}

This algorithm takes integers $n \geq m\geq 0$ and $k$ as input, and returns a basis of $V_k$ as a  simple submodule of $V_n\otimes V_m$. We require $k = n+m -2r$ for some $0\leq r\leq m$, in order to guarantee that $V_k$ is a simple submodule of $V_n \otimes V_m$ (see \Cref{thm:Clebsh-Gordan}). 

\medskip
We use the notation $e_i\odot f_j$ to indicate an  element of the basis of the respective vector space $V_n\otimes V_m$, $S^2(V_n)$ or $\Lambda^2(V_n)$. In the case of $S^2(V_n)$, when $i\neq j$, we use $e_i\odot e_j = e_i\otimes e_j + e_j \otimes e_i$ and $e_i\odot e_i = e_i\otimes e_i$. For $\Lambda^2(V_n)$ we use the standard basis $e_i\odot e_j = e_i\otimes e_j - e_j \otimes e_i$, where $1\leq i<j\leq n$.

\begin{algorithm}
  \caption{Finding a basis of $V_k$ as a submodule of $V_n\otimes V_m$, $S^2(V_n)$ or $\Lambda^2(V_n)$.}
  \begin{algorithmic}[1]  
  \STATE {\bfseries Input}: $n\geq m$, $k$, corresponding to $V_n,V_m, V_k$;\\
  \STATE {\bfseries Output:} Basis of $V_k$ from decompositions \Cref{thm:Clebsh-Gordan}, \Cref{cor:Clebsh-Gordan_sym_ext}
  \\
  \STATE $W_{k} := \{e_i\odot f_j : (\rho_n\otimes\rho_m)(H)(e_i\odot f_j) = k \cdot (e_i\odot f_j)\}$
  \STATE $W_{k+2} := \{e_i\odot f_j : (\rho_n\otimes\rho_m)(H)(e_i\odot f_j) = (k + 2) \cdot (e_i\odot f_j)\}$ 
  \IF{$W_{k+2} = \emptyset$}
    \STATE{$u_k \leftarrow W_{k}$}
  \ELSE 
    \STATE{$u_k \leftarrow \operatorname{Ker}(\rho_n\otimes\rho_m)(E)|_{W_{k}}$}
  \ENDIF 
  \STATE{$\mathcal{B} := \{(\rho_n\otimes\rho_m)^j(F)(u_k): j = 0,\ldots,k\}$}
  \RETURN $\mathcal{B}$
  \end{algorithmic}
  \end{algorithm}

\subsection{Algorithm to construct $\s_2(\C)$-equivariant morphisms}

This algorithm receives integers $n\geq m$ and $k$ as input. If $V_k$ is a simple submodule of $V_n\otimes V_m$, then the algorithm returns a matrix corresponding to a non trivial equivariant morphism $\Gamma_{n,m}^k\colon V_n\otimes V_m \to V_k$. Otherwise, it returns the zero matrix. 

\medskip
We use $[\mathcal{A}]$ to denote the matrix which rows are the elements of an ordered basis $\mathcal{A}$; $\mathbb{O}_{r,s}$ the zero matrix of dimension $r\times s$. In steps \ref{alg_step:image_start}-\ref{alg_step:image_end} we construct the matrix $I$ of dimension $(n+1)(m+1)\times (k+1)$. With the exception of an identity submatrix of dimension $(k+1)\times (k+1)$, the matrix $I$ only contains zeros. These zeros correspond to the morphisms
\[
  V_r\hookrightarrow V_n\otimes V_m \xrightarrow{0} V_k
\] when $r\neq k$ and 
\[
  V_k \hookrightarrow V_n\otimes V_m \xrightarrow{\Id} V_k.
  \]

\begin{algorithm}
  \caption{Constructing an $\s_2(\C)$-equivariant morphism $\Gamma \colon V_n\otimes V_m\to V_k$.}
  \begin{algorithmic}[1]  
  \STATE {\bfseries Input}: $n\geq m$, $k$, corresponding to $V_n,V_m, V_k$
  \STATE {\bfseries Output:} Matrix associated to $\Gamma$ in terms of the standard basis of $V_n\otimes V_m$.
  \IF{$V_k$ is not a simple submodule of $V_n\otimes V_m$}
    \RETURN{$\mathbb{O}_{(k+1),(n+1)(m+1)}$}
  \ENDIF
  \IF{$n=m$ and $2n-k\equiv 0 \operatorname{mod} 4$}\label{alg_step:symmetric}
      \STATE{$W = S^2(V_n)$}
  \ELSIF{$n=m$ and $2n + 2 -k\equiv 0 \operatorname{mod} 4$}\label{alg_step:antisymmetric}
      \STATE{$W = \Lambda^2(V_n)$}
  \ELSE
    \STATE{$W = V_n\otimes V_m$}\label{alg_step:bilinear}
  \ENDIF
  \STATE{$\mathcal{B}=\emptyset$, $\text{I} = []$}\label{alg_step:image_start}
  \FORALL{$V_r$ simple submodules of $W$}
    \STATE{$\mathcal{B} = \mathcal{B} \cup \{u_r,\rho(F)(u_r),\ldots,\rho^r(F)(u_r)\}$}
    \IF{$r\neq k$}
      \STATE{$\text{I} = \text{I} + [\mathbb{O}_{r+1,k+1}]$}
    \ELSE
      \STATE{$\text{I} = \text{I} + [\Id_{k+1,k+1}]$}
    \ENDIF
  \ENDFOR\label{alg_step:image_end}
  \STATE{$\Gamma = [\mathcal{B}]^{-1}\cdot \text{I}$}\label{alg_step:gamma_inverse}
  \IF{$W = V_n\otimes V_m$}
    \RETURN{$\Gamma$}
  \ELSIF{$W = S^2(V_n)$}
    \RETURN{Extension of $\Gamma$ to $V_n\otimes V_m$ as a symmetric function.}
  \ELSE
    \RETURN{Extension of $\Gamma$ to $V_n\otimes V_m$ as an antisymmetric function.}  
  \ENDIF
  \end{algorithmic}
  \end{algorithm}
  
  To reduce the number of calculations when $n=m$, we use the fact that $\Gamma\colon V_n\otimes V_m\to V_k$ is either symmetric or skew-symmetric, depending only on $2n-k\mod{4}$. Therefore we restrict our construction of $\Gamma$ to the corresponding subspace $S^2(V_n)$ or $\Lambda^2(V_n)$ and extend to $V_n\otimes V_n$ by symmetry or antisymmetry, see Steps \ref{alg_step:symmetric} -\ref{alg_step:bilinear}.

 A further optimisation of our algorithm is our implementation of step \ref{alg_step:gamma_inverse}.
 In order to calculate $[\mathcal{B}]^{-1}$, we order the basis $\mathcal{B}$ by decreasing weights to obtain a block matrix and then calculate the inverses of each block.
 With this process we invert $n+m+1$ blocks of size at most $(m+1)\times(m+1)$ instead of inverting the full matrix, which has size $(n+1)(m+1)\times (n+1)(m+1)$.
 This gives a significant improvement in complexity.
Indeed, when using the Gauss-Jordan algorithm, which has complexity $O(N^3)$ on the size of the matrix, our optimisation leads to a complexity of $O(m^3n)$ in contrast to the direct calculation that has complexity $O(m^3n^3)$.
 
\subsection{Algorithm to calculate $\Der(\g)$}\label{sec:codigo_derivaciones}
The input for this algorithm is the set of adjoint matrices of the basis of $\g$ and its output is a matrix containing a basis of $\Der(\g)$. 

\medskip
Let $e_1,\ldots,e_n$ be a basis for $\g$ and $\ad_{e_i}$ the associated adjoint to $e_i$. A linear map $D\colon\g\to\g$ is a derivation if and only if $D = (d_{r,s})_{1\leq r,s\leq n}$ satisfies 
\begin{align*}
  D\circ \ad_{e_i} (e_j) &= [D(e_i),e_j] + \ad_{e_i}\circ D (e_j)\\
  &= \sum_{k=1}^nd_{k,i}[e_k,e_j] + \ad_{e_i}\circ D (e_j)\\
  &= \sum_{k=1}^nd_{k,i}\ad_{e_k}(e_j) + \ad_{e_i}\circ D (e_j)
  \end{align*} for every $1\leq i,j\leq n$. 
This identity is equivalent to equality of linear operators
\[
  D\circ \ad_{e_i} = \sum_{k=1}^nd_{k,i}\ad_{e_k} + \ad_{e_i}\circ D,
\] for all $1\leq i \leq n$. Using the adjoint matrices as an input, the above identities allow us to define a system of linear equations, where $D$ is the unknown. We then use the isomorphism $\C^n\times \C^n \simeq \C^{n^2}$ to obtain a basis for the space of solutions of $D$. That is, we obtain a set of generators of the vector space $\Der(\g)$ and as a consequence, its dimension.

\subsection{Algorithm to verify if a vector space is a Lie algebra.}\label{sec:codigo_es_alg_Lie}
Given candidate structure constants as input, this algorithm verifies if these are the structure constants of a Lie algebra. That is, it checks the Jacobi identity.

Using candidate structure constants $[e_i,e_j]= \sum_k{c^i_{jk}}e_k$ for $1\leq i < j\leq n$, we first construct associated adjoint matrices $\ad_{e_i}$ using the antisymmetry of a Lie bracket. 
In our code we verify if the identity 
\[
\ad_{[e_i,e_j]} = \ad_{e_i}\circ \ad_{e_j} - \ad_{e_j}\circ \ad_{e_i}    
\] holds for every $1\leq i < j \leq n$.

\subsection{The sympathetic Lie algebra of dimension 25}

In this section we present how our algorithms construct, in practice, the isomorphisms given by the Weyl and Clebsh-Gordan theorems and Schur's lemma. We work with a concrete example: Benayadi's $25$-dimensional sympathetic Lie algebra.

\medskip
Consider the vector space $V_6$, generated by $\{e_0,e_1,\ldots , e_6\},$ and its corresponding representation $\rho_6:\s_2(\C) \to \gl(V_6)$, defined in \Cref{prop:explicit_irreps}. By \Cref{cor:Clebsh-Gordan_sym_ext} we have a decomposition
\[
 \Lambda^2 (V_6) \simeq  V_{10}\oplus V_6 \oplus V_2.  \eqno{(1)}
\]
In order to define an $\s_2(\C)$-equivariant morphism between $\Lambda^2 (V_6)$ and $V_6$, we use that
\[
\begin{aligned}
\Hom_{\s_2(\C)}(\Lambda^2 (V_6), V_6)  & \simeq \Hom_{\s_2(\C)} (V_{10}\oplus V_6 \oplus V_2 ,V_6)  \\
& \simeq  \Hom_{\s_2(\C)} (V_{10} ,V_6)\oplus  \Hom_{\s_2(\C)} (V_6,V_6) \\
& \qquad \oplus  \Hom_{\s_2(\C)} (V_2 ,V_6)
\end{aligned}
\]

From Schur's Lemma, it follows that 
\[
  \Hom_{\s_2(\C)} (V_{10} ,V_6)\equiv \Hom_{\s_2(\C)} (V_2 ,V_6) \equiv 0,
\] and hence
\[
\Hom_{\s_2(\C)}(\Lambda^2 (V_6), V_6)  \simeq  \Hom_{\s_2(\C)} (V_6,V_6) \eqno{(2)}.
\]

In order to construct this morphism, it is necessary to obtain the explicit decomposition from eq. (1), which can be achieved using standard representation theory: find a basis of highest weight vectors of $\Ker(\hat{\rho}_6 (E))$, where $\hat{\rho}_6=\rho_6\otimes \rho_6$.

\medskip
Define $e_i \wedge e_j := e_i \otimes e_j - e_j \otimes e_i$. A straightforward calculation shows
\begin{Lemma}
$\dim \Ker(\hat{\rho}_6(E))= 3$, and,
$\Ker(\hat{\rho}_6(E)) = \langle  e_0\wedge e_1, 
- e_0\wedge e_3 +2 e_1\wedge e_2,
3 e_0\wedge e_5 - 5 e_1 \wedge e_4 + 6  e_2 \wedge e_3 \rangle$. 
\end{Lemma}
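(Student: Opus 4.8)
The plan is to exploit the weight grading of $\Lambda^2(V_6)$ together with the abstract decomposition recorded in \Cref{cor:Clebsh-Gordan_sym_ext}. First I would note that $\hat{\rho}_6$ preserves the skew-symmetric part $\Lambda^2(V_6)$, so the restriction $\hat{\rho}_6(E)|_{\Lambda^2(V_6)}$ is well defined, and that by \Cref{prop:explicit_irreps} the operator $E$ raises the $H$-weight by $2$; hence the highest weight vectors are exactly the nonzero elements of $\Ker(\hat{\rho}_6(E))$. Since each simple summand $V_n$ contributes a one-dimensional space of highest weight vectors (spanned by $e_0$ in the model of \Cref{prop:explicit_irreps}), the kernel dimension equals the number of irreducible summands. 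Because $\Lambda^2(V_6)\simeq V_{10}\oplus V_6\oplus V_2$ by \Cref{cor:Clebsh-Gordan_sym_ext}, this immediately gives $\dim\Ker(\hat{\rho}_6(E))=3$ and pins the three highest weight vectors to the weights $10$, $6$ and $2$.

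Next I would read off the weights of the basis vectors. From \Cref{prop:explicit_irreps} one has $\hat{\rho}_6(H)(e_i\wedge e_j)=\bigl(12-2(i+j)\bigr)(e_i\wedge e_j)$, so the weight-$10$, weight-$6$ and weight-$2$ subspaces are spanned respectively by $\{e_0\wedge e_1\}$, by $\{e_0\wedge e_3,\,e_1\wedge e_2\}$, and by $\{e_0\wedge e_5,\,e_1\wedge e_4,\,e_2\wedge e_3\}$, corresponding to the pairs $i<j$ with $i+j$ equal to $1$, $3$ and $5$. Restricting $\hat{\rho}_6(E)$ to each subspace and solving the resulting small homogeneous system isolates the unique (up to scalar) highest weight vector in each weight.

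Concretely, I would apply $\hat{\rho}_6(E)(e_i\otimes e_j)=i(7-i)\,e_{i-1}\otimes e_j + j(7-j)\,e_i\otimes e_{j-1}$ and its skew-symmetrization. The weight-$10$ case is immediate, since $\Lambda^2(V_6)$ has no weight-$12$ vector and thus $e_0\wedge e_1\in\Ker(\hat{\rho}_6(E))$. In weight $6$ one computes $\hat{\rho}_6(E)(e_0\wedge e_3)=12\,e_0\wedge e_2$ and $\hat{\rho}_6(E)(e_1\wedge e_2)=6\,e_0\wedge e_2$, so the kernel is spanned by $-e_0\wedge e_3 + 2\,e_1\wedge e_2$. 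In weight $2$ the images land in the two-dimensional weight-$4$ space $\langle e_0\wedge e_4,\,e_1\wedge e_3\rangle$, and solving the resulting $2\times 3$ system selects $3\,e_0\wedge e_5 - 5\,e_1\wedge e_4 + 6\,e_2\wedge e_3$. As these three vectors lie in distinct weight spaces they are linearly independent, and by the dimension count above they span $\Ker(\hat{\rho}_6(E))$, completing the proof.

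There is no genuine conceptual obstacle: the argument reduces to the linear-algebraic bookkeeping of applying $\hat{\rho}_6(E)$ to a handful of wedge products. The only step demanding care is the weight-$2$ computation, where three basis vectors and a $2\times 3$ coefficient system are involved and the skew-symmetrization signs must be tracked consistently; an error in those signs would change the reported generator.
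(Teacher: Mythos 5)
Your proof is correct and follows the same route the paper indicates: the paper's ``straightforward calculation'' is precisely this weight-space computation, with the dimension count $\dim\Ker(\hat{\rho}_6(E))=3$ coming from the decomposition $\Lambda^2(V_6)\simeq V_{10}\oplus V_6\oplus V_2$ of \Cref{cor:Clebsh-Gordan_sym_ext} and the explicit generators found by solving the small linear systems in weights $10$, $6$ and $2$ (your coefficients check out: $12a+6b=0$ gives $-e_0\wedge e_3+2e_1\wedge e_2$, and $10a+6b=0$, $12b+10c=0$ give $3e_0\wedge e_5-5e_1\wedge e_4+6e_2\wedge e_3$). You also correctly make explicit the one point the paper leaves implicit, namely that the kernel is taken on $\Lambda^2(V_6)$ and that each simple summand contributes exactly one highest weight line.
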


 \begin{Cor}
 Let $u_{10} :=e_0\wedge e_1$, 
 $u_6:= - e_0\wedge e_3 +2 e_1\wedge e_2$ y
 $u_2:=3 e_0\wedge e_5 - 5 e_1 \wedge e_4 +6 e_2 \wedge e_3$. Then
\[
V_i \simeq \langle u_i, \hat{\rho}_6(F)(u_i), \ldots, \hat{\rho}_6(F)^i (u_i)  \rangle
\]
for $i=2,6,10$.
 \end{Cor}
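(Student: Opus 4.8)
The plan is to recognise this Corollary as an instance of the standard highest-weight construction for $\s_2(\C)$-modules, with the preceding Lemma supplying the three highest weight vectors. First I would verify that each $u_i$ is a weight vector of weight $i$ for $\hat{\rho}_6(H)$. Every summand of $u_i$ is a wedge $e_a\wedge e_b$, and since $\hat{\rho}_6(H)(e_a\wedge e_b) = \bigl((6-2a)+(6-2b)\bigr)\,e_a\wedge e_b$, a direct check shows that all terms appearing in $u_{10}$, $u_6$ and $u_2$ have $H$-weight $10$, $6$ and $2$ respectively. Combined with the Lemma, which gives $\hat{\rho}_6(E)(u_i)=0$, this shows that each $u_i$ is a highest weight vector of weight $i$ inside $\Lambda^2(V_6)$.

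Next I would invoke the structure of $\s_2(\C)$-modules generated by a highest weight vector, exactly as encoded in \Cref{prop:explicit_irreps}. Writing $v_j := \hat{\rho}_6(F)^j(u_i)$, the bracket relations $[H,F]=-2F$ and $[E,F]=H$ together with $\hat{\rho}_6(E)(u_i)=0$ and $\hat{\rho}_6(H)(u_i)=i\,u_i$ yield, by induction on $j$,
\[
  \hat{\rho}_6(H)(v_j) = (i-2j)\,v_j, \qquad
  \hat{\rho}_6(E)(v_j) = j\,(i+1-j)\,v_{j-1}.
\]
Since $\Lambda^2(V_6)$ is finite dimensional, the strictly descending sequence of weights forces a largest index $N$ with $v_N\neq 0$ and $v_{N+1}=0$; applying $\hat{\rho}_6(E)$ to $v_{N+1}=0$ gives $(N+1)(i-N)\,v_N=0$, whence $N=i$. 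Thus $v_0,\dots,v_i$ are non-zero of pairwise distinct weights, hence linearly independent, and $v_{i+1}=0$.

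Finally I would observe that the span $\langle v_0,\dots,v_i\rangle$ is invariant under $\hat{\rho}_6(H)$, $\hat{\rho}_6(E)$ and $\hat{\rho}_6(F)$ by the displayed relations, so it is an $(i+1)$-dimensional $\s_2(\C)$-submodule; comparing its action with the explicit formulas of \Cref{prop:explicit_irreps} identifies it with $V_i$ via $e_j\mapsto v_j$. Applying this for $i=2,6,10$ gives the three claimed submodules. I do not expect any genuine obstacle here beyond the weight bookkeeping of the first step: once the $u_i$ are known to be highest weight vectors of the stated weights, the remainder is the textbook $\s_2(\C)$ highest-weight argument, and the only point requiring care is confirming that the $F$-string does not terminate before length $i+1$, which the relation $\hat{\rho}_6(E)(v_j)=j(i+1-j)\,v_{j-1}$ rules out for $1\le j\le i$.
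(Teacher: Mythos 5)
Your proposal is correct and follows exactly the route the paper intends: the Corollary is presented as an immediate consequence of the preceding Lemma (which exhibits the $u_i$ as a basis of $\Ker(\hat{\rho}_6(E))$) together with the standard $\s_2(\C)$ highest-weight string argument, and your weight checks and the relation $\hat{\rho}_6(E)(v_j)=j(i+1-j)v_{j-1}$ are precisely the textbook details the paper leaves implicit. No gaps; the bookkeeping confirming that each $u_i$ has the stated $H$-weight and that the $F$-string has length exactly $i+1$ is all that was needed.
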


\begin{Cor}\label{cor:morphism}
The morphism in Eq. (2) is given by:
\begin{itemize}
\item $\hat{\rho}_6(F)^j (u_{10}) \mapsto 0$, for $j=0,\ldots, 10$,
\item $\hat{\rho}_6(F)^j (u_6) \mapsto e_j$ for $j=0,\ldots, 6$,
\item $\hat{\rho}_6(F)^j (u_2) \mapsto 0$  for $j= 0,1,2$.
\end{itemize}
\end{Cor}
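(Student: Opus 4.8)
The plan is to determine the morphism of Eq.~(2) one summand at a time, using Schur's Lemma to annihilate the $V_{10}$- and $V_2$-parts and using $\s_2(\C)$-equivariance to transport a single normalisation along the $V_6$-part. First I would record that the vectors
\[
\{\hat{\rho}_6(F)^j(u_{10})\}_{j=0}^{10}\cup\{\hat{\rho}_6(F)^j(u_6)\}_{j=0}^{6}\cup\{\hat{\rho}_6(F)^j(u_2)\}_{j=0}^{2}
\]
constitute a basis of $\Lambda^2(V_6)$; this is the content of the preceding corollary together with the dimension count $\binom{7}{2}=21=11+7+3$. It therefore suffices to prescribe the morphism on each of these vectors. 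By \Cref{thm:Schur}, the vanishing $\Hom_{\s_2(\C)}(V_{10},V_6)=\Hom_{\s_2(\C)}(V_2,V_6)=0$ forces any equivariant morphism to restrict to zero on the $V_{10}$- and $V_2$-summands, which gives the first and third assignments. I would emphasise that the third assignment is genuinely a consequence of Schur's Lemma and not of weight-counting alone: although $V_6$ does contain a weight-$2$ vector, namely $e_2$, that vector is not a highest weight vector, so applying $\rho_6(E)$ to any candidate image of the highest weight vector $u_2$ forces the corresponding scalar to vanish.

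For the middle summand, \Cref{thm:Schur} yields $\Hom_{\s_2(\C)}(V_6,V_6)=\C\cdot\Id$, so the restriction of the morphism to the $V_6$-summand is a scalar multiple of the canonical isomorphism. I would fix that scalar by declaring $u_6\mapsto e_0$, which is legitimate because $u_6$ and $e_0$ are the weight-$6$ highest weight vectors of the two copies of $V_6$. Equivariance with respect to the lowering operator then propagates this choice:
\[
\hat{\rho}_6(F)^j(u_6)\longmapsto \rho_6(F)^j(e_0)=e_j,\qquad j=0,\ldots,6,
\]
where the final equality is immediate from $\rho_6(F)(e_i)=e_{i+1}$ in \Cref{prop:explicit_irreps}. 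This is the second assignment, completing the description.

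I do not anticipate a genuine obstacle: the statement is a bookkeeping consequence of Schur's Lemma and the explicit action of the lowering operator. The only steps that require care are the normalisation of the $V_6\to V_6$ factor, which is a harmless convention, and the observation above that it is Schur's Lemma, rather than weight preservation alone, that annihilates the $V_2$-string.
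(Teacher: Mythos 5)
Your proposal is correct and takes essentially the same route as the paper: Schur's Lemma annihilates the $V_{10}$- and $V_2$-strings, and the $V_6$-component is the unique scalar multiple of the canonical isomorphism, normalised by $u_6 \mapsto e_0$ and transported along the string by $F$-equivariance via $\rho_6(F)(e_i)=e_{i+1}$. Your side remark that it is the $E$-equivariance (Schur), not weight preservation alone, that kills any putative map $u_2 \mapsto c\,e_2$ is a correct clarification of a point the paper leaves implicit.
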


\Cref{cor:morphism} allows us to define a skew-symmetric $\s_2(\C)$-equivariant morphism 
$\Gamma_{6,6}^6 : V_6 \times V_6 \to V_6$. To construct $\Gamma_{6,6}^6 : V_6 \times V_6 \to V_6$ we solve the linear system of equations originating from \Cref{cor:Clebsh-Gordan_sym_ext}. For example, it is easy to verify the identities
\[
u_6= - e_0\wedge e_3 + 2 e_1 \wedge e_2, \quad\text{ and } \quad
\hat{\rho}_6(F)^2(u_{10}) =  e_0\wedge e_3 + e_1\wedge e_2.
\]
Thus
\[
- e_0\wedge e_3 + 2 e_1 \wedge e_2 \mapsto e_0, \quad
 e_0\wedge e_3 + e_1\wedge e_2 \mapsto 0
\]

In terms of $\Gamma_{6,6}^6$ this is equivalent to:
\[
\begin{aligned}
- \Gamma_{6,6}^6(e_0, e_3) + 2 \Gamma_{6,6}^6(e_1,e_2)  & = e_0 \\
 \Gamma_{6,6}^6(e_0,e_3) +  \Gamma_{6,6}^6(e_1,e_2) & = 0.
\end{aligned}
\]
From these equations one can easily obtain $\Gamma_{6,6}^6(e_0,e_3)$ y $\Gamma_{6,6}^6(e_1,e_2)$.
Moreover,
\begin{Lemma}\label{lem:V_6_x_V_6_to_V_6}
Up to scalar multiples, the only skew-symmetric $\s_2(\C)$-equivariant morphism $\Gamma_{6,6}^6: V_6 \times V_6 \to V_6$ is given by:
\[
\begin{matrix}
\Gamma_{6,6}^6 & e_0 & e_1 & e_2 & e_3 & e_4 & e_5 & e_6 \\
&&&&&&& \\
e_0 &  0  &  0  &  0  & - e_0 & -2 e_1 & -2 e_2 & -  e_3 \\
&&&&&&& \\
e_1 & &  0  &  e_0 & e_1 &  0  & -  e_3 &  -  e_4 \\
&&&&&&& \\
e_2 & & &  0  &  e_2 &  e_3 &  0  & -  e_5 \\
&&&&&&&\\
e_3 & & & &  0  &  e_4 & e_5 & -  e_6 \\
&&&&&&&\\
e_4 & & & & &  0  & 2 e_6 &  0  \\
&&&&&&&\\
e_5 & & & & & &  0  &  0 \\
&&&&&&&\\
e_6  & & & & & & & 0 \\
\end{matrix}
\]
\end{Lemma}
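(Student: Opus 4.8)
The plan is to split the statement into two parts: that the space of skew-symmetric $\s_2(\C)$-equivariant morphisms $V_6\otimes V_6\to V_6$ is one-dimensional, and that the displayed table is a normalisation of its nonzero element. For the first part, a skew-symmetric bilinear equivariant map $V_6\otimes V_6\to V_6$ is the same datum as an equivariant map $\Lambda^2(V_6)\to V_6$. By \Cref{cor:Clebsh-Gordan_sym_ext} we have $\Lambda^2(V_6)\simeq V_{10}\oplus V_6\oplus V_2$, so \Cref{thm:Schur} gives $\Hom_{\s_2(\C)}(\Lambda^2(V_6),V_6)\simeq\Hom_{\s_2(\C)}(V_6,V_6)\simeq\C$, since only the middle summand contributes. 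Hence $\Gamma_{6,6}^6$ is unique up to a scalar, and it remains to compute one representative.

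For the shape of the morphism I would use $H$-equivariance. Since $\Gamma_{6,6}^6$ preserves weights and $\operatorname{wt}(e_i)+\operatorname{wt}(e_j)=(6-2i)+(6-2j)=6-2(i+j-3)=\operatorname{wt}(e_{i+j-3})$, it follows that $\Gamma_{6,6}^6(e_i,e_j)=c_{i,j}\,e_{i+j-3}$ for scalars $c_{i,j}$, with the convention $e_s:=0$ for $s\notin\{0,\dots,6\}$. This already forces the off-diagonal pattern of the table, and skew-symmetry gives $c_{i,i}=0$ and $c_{j,i}=-c_{i,j}$.

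To determine the scalars I would impose $E$-equivariance. Using $\rho_6(E)e_s=s(7-s)e_{s-1}$ from \Cref{prop:explicit_irreps} together with $(\rho_6\otimes\rho_6)(E)=\rho_6(E)\otimes\Id+\Id\otimes\rho_6(E)$, comparing coefficients of $e_{i+j-4}$ in $\Gamma_{6,6}^6\bigl((\rho_6\otimes\rho_6)(E)(e_i\otimes e_j)\bigr)=\rho_6(E)\Gamma_{6,6}^6(e_i,e_j)$ yields the recursion
\[
i(7-i)\,c_{i-1,j}+j(7-j)\,c_{i,j-1}=(i+j-3)(10-i-j)\,c_{i,j}.
\]
The coefficient on the right vanishes exactly at the top weight $i+j=3$, which is where the one-parameter freedom sits; there I would use that $\Gamma_{6,6}^6$ annihilates the $V_{10}$-summand, whose weight-$6$ vector $\hat{\rho}_6(F)^2(u_{10})=e_0\wedge e_3+e_1\wedge e_2$ forces $c_{0,3}+c_{1,2}=0$, and normalise $c_{1,2}=1$ (so $\Gamma_{6,6}^6(e_1,e_2)=e_0$). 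For each weight level $3<i+j\le 9$ the recursion then has nonzero right-hand coefficient and determines $c_{i,j}$ from the previously computed lower-weight scalars, while $c_{i,j}=0$ automatically once $i+j\ge 10$ since then $e_{i+j-3}=0$. Propagating through the seven relevant weight levels reproduces the table.

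The main obstacle is purely the bookkeeping of this propagation: the middle weight spaces of $\Lambda^2(V_6)$ are up to three-dimensional, receiving contributions from all of $V_{10}$, $V_6$ and $V_2$, so a direct change-of-basis computation would require isolating the $V_6$-component at every weight. The recursion above is precisely the device that sidesteps this, reducing the whole calculation to iterating one scalar identity from a single normalisation. Consistency of the overcomplete system --- the same $c_{i,j}$ being reachable along different paths, and the agreement with the parallel $F$-equivariance recursion --- is guaranteed a priori by the one-dimensionality established in the first paragraph, so a final substitution back into skew-symmetry and $E$-equivariance suffices to verify the entries.
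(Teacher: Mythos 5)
Your proposal is correct, and its first half --- reducing to $\Hom_{\s_2(\C)}(\Lambda^2(V_6),V_6)\simeq\Hom_{\s_2(\C)}(V_6,V_6)\simeq\C$ via \Cref{cor:Clebsh-Gordan_sym_ext} and \Cref{thm:Schur} --- is exactly how the paper establishes uniqueness. Where you genuinely diverge is in computing the representative. The paper proceeds by explicit decomposition: it computes all three highest weight vectors $u_{10},u_6,u_2$ as a basis of $\Ker(\hat{\rho}_6(E))$, builds the full $F$-orbit basis of $\Lambda^2(V_6)$, defines the morphism on that basis (\Cref{cor:morphism}: $\hat{\rho}_6(F)^j(u_6)\mapsto e_j$, everything else to $0$), and then recovers $\Gamma_{6,6}^6$ on the standard basis by inverting the change of basis weight level by weight level --- this is precisely the procedure implemented in their Algorithm 2 with the block-inversion optimisation, and it is what makes their method uniform over all the morphisms $\Gamma_{n,m}^k$ needed elsewhere. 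You instead exploit $H$-equivariance to force $\Gamma_{6,6}^6(e_i,e_j)=c_{i,j}e_{i+j-3}$ and turn $E$-equivariance into the scalar recursion $i(7-i)c_{i-1,j}+j(7-j)c_{i,j-1}=(i+j-3)(10-i-j)c_{i,j}$, seeded at level $i+j=3$ by the single relation $c_{0,3}+c_{1,2}=0$ (the one ingredient you share with the paper, coming from $\hat{\rho}_6(F)^2(u_{10})=e_0\wedge e_3+e_1\wedge e_2$ lying in the kernel) and the normalisation $c_{1,2}=1$; I have checked that this propagation reproduces every entry of the table, including the vanishing forced at levels $i+j\geq 10$. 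Your appeal to the a priori one-dimensionality to dispose of consistency (different paths through the recursion, $F$-equivariance, the level-$10$ constraints on level-$9$ values, where the right-hand coefficient also vanishes --- a point your ``vanishes exactly at $i+j=3$'' slightly understates) is legitimate: a nonzero equivariant map exists, its level-$3$ data cannot vanish without the whole map vanishing, and the recursion determines everything else from that data. The trade-off is clear: your route is leaner by hand, needing neither $u_2$ nor $u_6$ nor any matrix inversion, while the paper's route, though heavier, is the one that scales to an algorithm applicable to every tensor pair appearing in their classification.
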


\medskip
Analogously, we conclude that $\Hom_{\s_2(\C)} (\Lambda^2V_6, V_2) \simeq
\Hom_{\s_2(\C)} (V_2, V_2)$ and obtain:

\begin{Lemma}\label{lem:V_6_x_V_6_to_V_2}
Up to scalar multiples, the only bilinear, skew-symmetric $\s_2(\C)$-equivariant morphism 
$\Gamma_{6,6}^2: V_6 \times V_6 \to V_2:=\langle f_0, f_1, f_2\rangle$ is given by:
$$
\begin{matrix}
\Gamma_{6,6}^2 & e_0 & e_1 & e_2 & e_3 & e_4 & e_5 & e_6 \\
&&&&&&& \\
e_0 &  0  &  0  &  0  &  0  &  0  &  f_0 & 3 f_1 \\
&&&&&&& \\
e_1 & &  0  &  0  &  0  & - f_0 & - 2 f_1 & 3 f_2 \\
&&&&&&& \\
e_2 & & &  0  &  f_0 &  f_1 & -5 f_2 &  0  \\
&&&&&&&\\
e_3 & & & &  0  & 6 f_2 &  0  &  0  \\
&&&&&&&\\
e_4 & & & & &  0  &  0  &  0  \\
&&&&&&&\\
e_5 & & & & & &  0  &  0 \\
&&&&&&&\\
e_6  & & & & & & &  0  \\
\end{matrix}
$$
\end{Lemma}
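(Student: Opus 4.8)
The plan is to mirror the argument just given for $\Gamma_{6,6}^6$, splitting into an abstract uniqueness step and an explicit computation. First I would work at the level of Hom-spaces. By \Cref{cor:Clebsh-Gordan_sym_ext} we have $\Lambda^2 V_6 \simeq V_{10}\oplus V_6 \oplus V_2$, with the relevant copy of $V_2$ sitting in the antisymmetric part (since $2\cdot 6 + 2 - 2 = 12 \equiv 0 \bmod 4$), so that every morphism obtained below is automatically skew-symmetric. Then
\[
\Hom_{\s_2(\C)}(\Lambda^2 V_6, V_2) \simeq \Hom_{\s_2(\C)}(V_{10},V_2)\oplus \Hom_{\s_2(\C)}(V_6,V_2)\oplus \Hom_{\s_2(\C)}(V_2,V_2).
\]
By Schur's Lemma (\Cref{thm:Schur}) the first two summands vanish and the last is one-dimensional, whence $\dim\Hom_{\s_2(\C)}(\Lambda^2 V_6, V_2)=1$. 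This establishes that a nonzero skew-symmetric equivariant $\Gamma_{6,6}^2$ exists and is unique up to scalar, so fixing one nonzero value (the normalization $\Gamma_{6,6}^2(e_0,e_5)=f_0$) pins down the entire table.

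Next I would cut down the unknowns using weights. Since $\Gamma_{6,6}^2$ is $H$-equivariant it preserves weights; as $e_i\otimes e_j$ has weight $12-2(i+j)$ and $f_l$ has weight $2-2l$, equivariance forces $\Gamma_{6,6}^2(e_i,e_j)=C_{i,j}\,f_{i+j-5}$, which vanishes automatically unless $5\le i+j\le 7$. Together with skew-symmetry ($C_{j,i}=-C_{i,j}$, $C_{i,i}=0$) this leaves exactly the nine coefficients $C_{i,j}$ with $i<j$ and $i+j\in\{5,6,7\}$ — precisely the entries appearing in the displayed table.

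Finally I would determine these coefficients. Following the analogous computation for $\Gamma_{6,6}^6$, the morphism is characterised by sending the highest-weight vector $u_2=3\,e_0\wedge e_5-5\,e_1\wedge e_4+6\,e_2\wedge e_3$ of the $V_2$-summand, together with its descendants $\hat{\rho}_6(F)^j(u_2)$, to $f_j$, while annihilating $u_{10}$, $u_6$ and all their $F$-descendants. Concretely one expresses each bivector $e_i\wedge e_j$ in the weight-adapted basis $\{\hat{\rho}_6(F)^*(u_{10})\}\cup\{\hat{\rho}_6(F)^*(u_6)\}\cup\{\hat{\rho}_6(F)^*(u_2)\}$ and reads off the $V_2$-component. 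Because $\Gamma_{6,6}^2$ preserves weights, this change-of-basis problem decouples into one block per weight, each of dimension at most three (the maximal multiplicity of a weight in $\Lambda^2 V_6$), so only tiny matrices need inverting. As a convenient cross-check, applying $E$- and $F$-equivariance directly to $\Gamma_{6,6}^2(e_i,e_j)=C_{i,j}f_{i+j-5}$ yields the linear recursions $C_{i+1,j}+C_{i,j+1}=C_{i,j}$ (valid for $4\le i+j\le 6$) and $i(7-i)C_{i-1,j}+j(7-j)C_{i,j-1}=2\,C_{i,j}$ on the weight $-2$ row, which with skew-symmetry and the normalization $C_{0,5}=1$ reproduce the table. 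The only genuine labour is this finite linear-algebra step, and the main point is that the weight grading keeps every subsystem trivially small; there is no real obstacle beyond bookkeeping.
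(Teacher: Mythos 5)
Your proposal is correct and follows essentially the same route as the paper: the paper's proof is precisely the ``analogous'' argument to its construction of $\Gamma_{6,6}^6$, i.e.\ the decomposition $\Lambda^2 V_6 \simeq V_{10}\oplus V_6\oplus V_2$ plus Schur's Lemma to get $\dim\Hom_{\s_2(\C)}(\Lambda^2 V_6, V_2)=1$, followed by the explicit solve using the highest-weight vector $u_2 = 3\,e_0\wedge e_5 - 5\,e_1\wedge e_4 + 6\,e_2\wedge e_3$ and its $F$-descendants. Your additional weight bookkeeping ($C_{i,j}\neq 0$ only for $i+j\in\{5,6,7\}$) and the $E$/$F$-recursions are a sound cross-check consistent with the table but not a different method.
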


\begin{Lemma}\label{lem:V_6_x_V_4_to_V_2}
Up to scalar multiples, there is a unique bilinear $\s_2(\C)$-equivariant morphism
$\Gamma_{6,4}^2: V_6 \times V_4 \to V_2=\langle g_0, g_1, g_2 \rangle$ and it is given by:
$$
\begin{matrix}
\Gamma_{6,4}^2 & f_0 & f_1 & f_2 & f_3 & f_4  \\
&&&&&&& \\
e_0 &  0  &  0  &  0  &  0  & g_0 \\
&&&&&&& \\
e_1 &  0  &  0  &  0  & -  g_0 &  g_1 \\
&&&&&&& \\
e_2 &  0  &  0  &  g_0 & - 2 g_1 &  g_2  \\
&&&&&&&\\
e_3 &  0  & - g_0 &  3g_1& - 3 g_2 &  0  \\
&&&&&&&\\
e_4 &  g_0 & - 4 g_1 &  6 g_2  &  0  &  0   \\
&&&&&&&\\
e_5 & 5 g_1 & -10 g_2  &  0  &  0  &  0  \\
&&&&&&&\\
e_6  & 15 g_2 &  0  &  0  &  0  &  0  \\
\end{matrix}
$$
\end{Lemma}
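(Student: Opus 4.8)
The plan is to follow the same two-step strategy used to establish \Cref{lem:V_6_x_V_6_to_V_6} and \Cref{lem:V_6_x_V_6_to_V_2}: first prove existence and uniqueness up to a scalar by combining the Clebsch--Gordan decomposition with Schur's lemma, and then determine the explicit table by locating the copy of $V_2$ inside $V_6\otimes V_4$ and spreading a highest weight vector with the lowering operator. The one structural difference from the previous two lemmas is that the factors $V_6$ and $V_4$ are non-isomorphic, so no (anti)symmetry constraint is imposed and the morphism is merely bilinear equivariant.

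For the first step, I would apply \Cref{thm:Clebsh-Gordan} with $n=6$, $m=4$ to obtain
\[
V_6\otimes V_4 \simeq V_{10}\oplus V_8\oplus V_6\oplus V_4\oplus V_2,
\]
in which $V_2$ appears with multiplicity one. Decomposing $\Hom_{\s_2(\C)}(V_6\otimes V_4,V_2)$ along this direct sum and applying Schur's lemma (\Cref{thm:Schur}) kills every summand except $\Hom_{\s_2(\C)}(V_2,V_2)\simeq\C$, so that
\[
\Hom_{\s_2(\C)}(V_6\otimes V_4,V_2)\simeq \C.
\]
This yields the existence of a nonzero $\Gamma_{6,4}^2$ together with its uniqueness up to a scalar.

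For the explicit formula, I would work in the basis $\{e_i\otimes f_j\}$ and use the weight formula $(\rho_6\otimes\rho_4)(H)(e_i\otimes f_j)=\bigl(10-2(i+j)\bigr)(e_i\otimes f_j)$ coming from \Cref{prop:explicit_irreps}. The generator $u_2$ of the $V_2$ summand is the unique (up to scale) weight-$2$ vector, i.e.\ a combination of the $e_i\otimes f_j$ with $i+j=4$ that is annihilated by $(\rho_6\otimes\rho_4)(E)$; solving this single homogeneous linear condition pins down $u_2$. Declaring $\Gamma_{6,4}^2(F^{\,l}u_2)=g_l$ for $l=0,1,2$ and sending the highest weight vectors of $V_{10},V_8,V_6,V_4$ to $0$, I would then expand each $F^{\,l}u_2=(\rho_6\otimes\rho_4)(F)^{\,l}(u_2)$ in the tensor basis and read off the coefficient of every $e_i\otimes f_j$, producing the entries of the stated table.

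The main obstacle is purely computational bookkeeping rather than conceptual: solving $(\rho_6\otimes\rho_4)(E)(u_2)=0$ correctly and then tracking the integer coefficients generated by repeated application of $(\rho_6\otimes\rho_4)(F)$ across the weight spaces $i+j=4,5,6$, so that each $\Gamma_{6,4}^2(e_i,f_j)$ emerges with the precise scalar displayed. Weight considerations already force $\Gamma_{6,4}^2(e_i,f_j)$ to be a multiple of $g_{(i+j)-4}$ and to vanish unless $4\le i+j\le 6$; hence the verification reduces to confirming these scalars, a task that can be carried out by hand or, as in the worked example, delegated to the algorithm of \Cref{sec:codigo_generar_V_k}.
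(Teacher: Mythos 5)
Your proposal is correct and matches the paper's own method: the paper establishes this lemma exactly by the Clebsch--Gordan/Schur argument $\Hom_{\s_2(\C)}(V_6\otimes V_4,V_2)\simeq\Hom_{\s_2(\C)}(V_2,V_2)\simeq\C$ and then computes the table algorithmically by finding the highest weight vector $u_2$ in the kernel of $(\rho_6\otimes\rho_4)(E)$ and lowering with $F$, precisely as you describe (see \Cref{sec:codigo_generar_V_k} and the worked example). The only nitpick is that annihilation by $E$ on the five-dimensional weight-$2$ space is a small linear system (with one-dimensional solution space) rather than a single scalar equation, but this does not affect the argument.
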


\medskip

One can then extend $\Gamma_{6,4}^2$ to a unique bilinear skew-symmetric and $\s_2(\C)$-equivariant morphism  
\[\Gamma_{(6,4)}^2: (V_6 \oplus V_4) \times (V_6\oplus V_4) \to V_2,\] by defining
\[
\Gamma_{(6,4)}^2\vert_{V_6 \times V_6} \equiv 0, \quad \Gamma_{(6,4)}^2\vert_{V_4 \times V_4} \equiv 0,\]
and
\[
\Gamma_{(6,4)}^2\vert_{V_6 \times V_4} = \Gamma_{6,4}^2 \quad
\Gamma_{(6,4)}^2\vert_{V_4 \times V_6} = - \Gamma_{6,4}^2,
\]
 where 
\[
\Gamma_{(6,4)}^2(u,w):= - \Gamma_{6,4}^2 (w,u),
\] for $u\in V_4$, $w\in V_6$.

\begin{Lemma}\label{lem:V_4_x_V_4_to_V_2}
Up to scalar multiples, the only skew-symmetric $\s_2(\C)$-equivariant morphism 
$\Gamma_{4,4}^2: V_4 \times V_4 \to V_2$ is given by:
$$
\begin{matrix}
\Gamma_{4,4}^2 & e_0 & e_1 & e_2 & e_3 & e_4  \\
&&&&& \\
e_0 &  0  &  0  &  0  & - f_0 & -2 f_1 \\
&&&&& \\
e_1 & &  0  &  f_0 &  f_1 & -2 f_2  \\
&&&&& \\
e_2 & & &  0  & 3 f_2 &  0   \\
&&&&&\\
e_3 & & & &  0  &  0  \\
&&&&&\\
e_4 & & & & &  0   \\
\end{matrix}
$$
\end{Lemma}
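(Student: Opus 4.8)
The plan is to reduce the problem to a count of $\s_2(\C)$-equivariant maps out of $\Lambda^2 V_4$ and then pin down the one-dimensional space they form. Every skew-symmetric $\s_2(\C)$-equivariant bilinear map $V_4\times V_4\to V_2$ factors uniquely through the projection $V_4\otimes V_4\to\Lambda^2 V_4$, so it suffices to determine $\Hom_{\s_2(\C)}(\Lambda^2 V_4,V_2)$. By \Cref{cor:Clebsh-Gordan_sym_ext} we have $\Lambda^2 V_4\simeq V_6\oplus V_2$, and therefore, by Schur's Lemma (\Cref{thm:Schur}),
\[
\Hom_{\s_2(\C)}(\Lambda^2 V_4, V_2)\simeq\Hom_{\s_2(\C)}(V_6,V_2)\oplus\Hom_{\s_2(\C)}(V_2,V_2)\simeq 0\oplus\C .
\]
This space is one-dimensional, which is exactly the assertion that $\Gamma_{4,4}^2$ is unique up to a scalar multiple; it remains only to exhibit one nonzero representative and record its table.

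To build the explicit morphism I would first locate the unique copy of $V_2$ inside $\Lambda^2 V_4$ through its highest weight vector, that is, a vector of weight $2$ lying in $\Ker\bigl((\rho_4\otimes\rho_4)(E)\bigr)$. The weight-$2$ subspace of $\Lambda^2 V_4$ is spanned by $e_0\wedge e_3$ and $e_1\wedge e_2$, and using $\rho_4(E)(e_i)=i(5-i)e_{i-1}$ from \Cref{prop:explicit_irreps} one computes $(\rho_4\otimes\rho_4)(E)(e_0\wedge e_3)=6\,e_0\wedge e_2$ and $(\rho_4\otimes\rho_4)(E)(e_1\wedge e_2)=4\,e_0\wedge e_2$. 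Hence $u_2:=2\,e_0\wedge e_3-3\,e_1\wedge e_2$ spans the kernel in this weight space and generates the $V_2$-summand, while $u_6:=e_0\wedge e_1$ (the unique weight-$6$ vector) generates the complementary $V_6$.

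Having fixed $u_2$, I would define $\Gamma_{4,4}^2$ as the equivariant map that kills the $V_6$-summand, sends $u_2\mapsto f_0$, and is propagated by the lowering operator via $\Gamma_{4,4}^2\bigl((\rho_4\otimes\rho_4)(F)^j u_2\bigr)=\rho_2(F)^j f_0$ for $j=0,1,2$; extending skew-symmetrically to $V_4\times V_4$ then produces the required bilinear map. The individual table entries are obtained by writing each standard wedge $e_i\wedge e_j$ in the weight-adapted basis $\{(\rho_4\otimes\rho_4)(F)^j u_6,(\rho_4\otimes\rho_4)(F)^j u_2\}$ and applying $\Gamma_{4,4}^2$. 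The representation-theoretic heart of the lemma, namely existence and uniqueness up to scalar, is immediate from Clebsh--Gordan and Schur; the only genuine labour is this last bookkeeping step, i.e. inverting the change of basis from $\{e_i\wedge e_j\}$ to the adapted basis in order to read off each coefficient. This is precisely the computation already carried out for \Cref{lem:V_6_x_V_6_to_V_6} and \Cref{lem:V_6_x_V_6_to_V_2} and is mechanised by the equivariant-morphism algorithm of the appendix, so I expect it to present no conceptual obstacle beyond careful arithmetic.
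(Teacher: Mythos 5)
Your proposal is correct and takes essentially the same route as the paper, which establishes this lemma precisely by the Clebsh--Gordan decomposition $\Lambda^2 V_4 \simeq V_6 \oplus V_2$, Schur's lemma, and the highest-weight-vector/lowering-operator computation (finding $u_2 \in \Ker\bigl((\rho_4\otimes\rho_4)(E)\bigr)$ and solving the resulting linear system), as mechanised by its appendix algorithms. The only cosmetic difference is normalisation: your choice $u_2 = 2\,e_0\wedge e_3 - 3\,e_1\wedge e_2 \mapsto f_0$ produces $-\tfrac{1}{5}$ times the printed table, which is immaterial since the statement is only up to scalar multiples.
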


We modified the notation used in Benayadi's original construction, \cite{B}. Here $V_6=D(3)$, $V_4=D(2)$ and $V_2=D(1)$.

\medskip
As a vector space, $\g=\s_2(\C) \oplus V_6 \oplus V_4 \oplus V_6 \oplus V_2$ and so $\dim(\g) = 25$. The algebra structure of $\g$ is then defined as:

$$
\begin{matrix}
    [,]_{\g}   & \s_2(\C)         & & V_{6_1}      &       & V_4                 & & V_{6_2}    & V_2    \\
    &&&&&&&& \\
 \s_2(\C)  &  [,]_{\s_2(\C)}  & & \rho_6        & & \rho_4             & & \rho_6  & \rho_2   \\
     &&&&&&&& \\
V_{6_1} &                                &&  \Gamma_{6_1,6_1}^{6_2} && \Gamma_{6_1,4}^2  && \Gamma_{6_1,6_2}^2 &  0  \\
 &&&&&&&& \\
V_4 &                               &&                              && \Gamma_{4,4}^2  &&  0  &  0  \\
  &&&&&&&& \\
V_{6_2} &                             &&                                &&                             &&  0  &  0  \\
&&&&&&&& \\
V_2  &                             &&                               &&                             &&     &  0 
 \end{matrix}
 $$

Here $\Gamma_{a,b}^c : V_a \times V_b \to V_c$ denotes the unique, up to scalar multiples, bilinear $\s_2(\C)$-equivariant morphism obtained from Schur's Lemma. For example, $\Gamma_{6,6}^6$ is defined by \Cref{lem:V_6_x_V_6_to_V_6} and we are using the notation $V_{6_2}$ to indicate that projection is onto the second copy of $V_6$.

\medskip
Using the algorithm \Cref{sec:codigo_generar_V_k} we are able to explicitly calculate basis for $\Lambda^2 V_6$ and $\Lambda^2 V_4$ and therefore establish \Cref{lem:V_6_x_V_6_to_V_6}, \Cref{lem:V_6_x_V_4_to_V_2}, \Cref{lem:V_6_x_V_4_to_V_2}.

\medskip
Given the structure constants that we obtained, the algorithm in \Cref{sec:codigo_es_alg_Lie}  then verifies that $\g$ is indeed a $25$-dimensional Lie algebra.

\medskip
Finally, we use the algorithm \ref{sec:codigo_derivaciones} to explicitly calculate $\Der(\g)$ and obtain its dimension $\dim_{\C} \Der(\g)=25$. In this case we know $0= Z(\g) = \Ker(\ad_{\g})$ and so 
$\dim_{\C} \ad(\g) = 25$. Since $\dim_{\C} \Der(\g)=\dim_{\C} \ad(\g)$, we can conclude:

\begin{Prop} Let $\g$ be as in the previous construction. Then,
\begin{enumerate}
\item $Z(\g)=\{ 0\}$,
\item $\g = [\g,\g]$,
\item $\Der(\g) = \ad(\g)$.
\end{enumerate}
That is, $\g$ is a sympathetic Lie algebra.
\end{Prop}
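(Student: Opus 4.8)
The plan is to check the three defining conditions in turn, using the fact that the $\s_2(\C)$-module structure of $\h$ already settles two of them and confines the real work to the derivation algebra. Before verifying anything, I would first confirm that the bracket assembled from the equivariant morphisms $\Gamma_{6,6}^6$, $\Gamma_{6,4}^2$, $\Gamma_{6,6}^2$ and $\Gamma_{4,4}^2$ of \Cref{lem:V_6_x_V_6_to_V_6,lem:V_6_x_V_4_to_V_2,lem:V_6_x_V_6_to_V_2,lem:V_4_x_V_4_to_V_2} actually defines a Lie algebra. Each $\Gamma$ is $\s_2(\C)$-equivariant and skew-symmetric by construction, but the Jacobi identity is an extra constraint that must be established; this is precisely the verification carried out by the algorithm of \Cref{sec:codigo_es_alg_Lie}, which reads off the structure constants from the multiplication table of $\g$ and checks $\ad_{[e_i,e_j]} = \ad_{e_i}\circ\ad_{e_j} - \ad_{e_j}\circ\ad_{e_i}$ over every basis pair.

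Conditions (1) and (2) I would derive structurally rather than by computation. For (1), write a central element as $x = x_L + x_h$ with $x_L\in\s_2(\C)$ and $x_h\in\h$; the condition $[x,\s_2(\C)]=0$ splits along the direct sum $\g=\s_2(\C)\oplus\h$ into $[x_L,\s_2(\C)]=0$ and $[x_h,\s_2(\C)]=0$. The first forces $x_L\in Z(\s_2(\C))=0$, and the second forces $x_h$ to be fixed by the whole $\s_2(\C)$-action. Since every simple summand $V_6,V_4,V_6,V_2$ of $\h$ is nontrivial, \Cref{prop:Levi_bracket_symp} gives $[\s_2(\C),V_{n_i}]=V_{n_i}$ and leaves no nonzero fixed vector, so $x_h=0$ and $Z(\g)=0$. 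Condition (2) uses the same input: $\s_2(\C)$ is simple, hence $\s_2(\C)=[\s_2(\C),\s_2(\C)]\subseteq[\g,\g]$, while $[\s_2(\C),V_{n_i}]=V_{n_i}$ places every summand of $\h$ inside $[\g,\g]$; therefore $[\g,\g]=\g$.

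Condition (3) is the only genuinely hard step and the reason a computation is unavoidable. The inclusion $\ad(\g)\subseteq\Der(\g)$ always holds, and since $Z(\g)=0$ the map $\ad$ is injective, so $\dim_{\C}\ad(\g)=\dim_{\C}\g=25$. It thus suffices to prove $\dim_{\C}\Der(\g)\le 25$, after which equality of dimensions together with $\ad(\g)\subseteq\Der(\g)$ forces $\Der(\g)=\ad(\g)$. I would obtain $\dim_{\C}\Der(\g)$ as the dimension of the solution space of the homogeneous linear system of \Cref{sec:codigo_derivaciones}, namely $D\circ\ad_{e_i}=\sum_k d_{k,i}\,\ad_{e_k}+\ad_{e_i}\circ D$ for each basis element $e_i$, built from the explicit adjoint matrices coming from the morphisms $\Gamma$. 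The expected output is exactly $25$, closing the argument. The main obstacle is that there is no representation-theoretic shortcut available to exclude outer derivations for this particular algebra, so the whole conclusion hinges on the precise value of $\dim_{\C}\Der(\g)$, which must be pinned down by solving for the $625$ unknowns $d_{k,i}$; this is what the algorithm of \Cref{sec:codigo_derivaciones} is designed to do.
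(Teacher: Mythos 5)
Your proposal follows essentially the same route as the paper: conditions (1) and (2) are settled structurally via $Z(\s_2(\C))=0$, the absence of trivial summands in $\h$, and $[\s_2(\C),V_i]=V_i$, while condition (3) is reduced to the dimension count $\dim_{\C}\Der(\g)=25=\dim_{\C}\ad(\g)$ (using injectivity of $\ad$ from $Z(\g)=0$) with $\dim_{\C}\Der(\g)$ obtained from the linear system of \Cref{sec:codigo_derivaciones}, exactly as the paper does. Your only additions are to spell out the $Z(\g)=0$ argument, which the paper asserts without detail, and to flag explicitly that the Jacobi identity must first be verified by the algorithm of \Cref{sec:codigo_es_alg_Lie} — both consistent with the paper's treatment.
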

\begin{proof}
Notice that by construction (2) $[\s_2(\C), V_i] = V_i$ and therefore $\g = [\g,\g]$.
\end{proof}

\bigskip
\subsection*{Acknowledgements} 
ALGP is a member of the Centre for Topological Data Analysis funded by the EPSRC grant “New
Approaches to Data Science: Application Driven Topological Data Analysis” number EP/R018472/1. 
For the  purpose of Open Access, the authors have applied a CC BY public copyright licence to any  Author Accepted Manuscript (AAM) version arising from this submission.
GS would like to acknowledge the partial support received
by CONACyT grant A-S1-45886 and by PRODEP grant UASLP-CA-228.

\bibliography{references}\bibliographystyle{abbrvurl}
\end{document}